\let\underbrace\LaTeXunderbrace
    \def\MR#1{}
\theoremstyle{plain}
\newtheorem{Theorem}{Theorem}[section]
\newtheorem{Lemma}[Theorem]{Lemma}
\newtheorem{Corollary}[Theorem]{Corollary}
\newtheorem{Proposition}[Theorem]{Proposition}
\theoremstyle{definition}
\newtheorem{Assumptions and Discussion}[Theorem]{Assumptions and Discussion}
\newtheorem{Example}[Theorem]{Example}
\newtheorem{Definition}[Theorem]{Definition}
\newtheorem{Question}[Theorem]{Question}
\newtheorem{Remark}[Theorem]{Remark}
\theoremstyle{remark}
\newtheorem{Setting}[Theorem]{Setting}
\newtheorem*{acknowledgment*}{Acknowledgment}
\setlist{leftmargin=*}
\def\alert#1{{\textcolor{red}{#1}}}
\def\ceil#1{\left\lceil #1 \right\rceil}
\def\deg{\operatorname{deg}}
\def\depth{\operatorname{depth}}
\def\dim{\operatorname{dim}}
\def\floor#1{\left\lfloor #1 \right\rfloor}
\def\Ht{\operatorname{ht}} 
\def\KK{{\mathbb K}}
\def\Min{\operatorname{Min}}
\def\PP{{\mathbb P}}
\def\sat{\operatorname{sat}}
\def\sdefect{\operatorname{sdefect}} 
\def\SS{\mathbb{S}}
\def\supp{\operatorname{supp}}
\def\type{\operatorname{type}}
\newcommand\bdbeta{{\bm \beta}}
\newcommand\bdF{{\bm F}}
\newcommand\bdlambda{{\bm \lambda}}
\newcommand\bdz{{\bm z}}
\newcommand\calA{\mathcal{A}}
\newcommand\calF{\mathcal{F}}
\newcommand\calH{\mathcal{H}}
\newcommand\calL{\mathcal{L}}
\newcommand\calP{\mathcal{P}}
\newcommand\frakm{\mathfrak{m}}
\newcommand\frakp{\mathfrak{p}}
\newcommand\frakq{\mathfrak{q}}
\def\frakS{\mathfrak{S}}
\newcommand\Supp{\operatorname{Supp}}
\newcommand{\Ass}{\operatorname{Ass}}
\newcommand{\projdim}{\operatorname{proj\,dim}}
\def\reg{\operatorname{reg}}
\begin{document}

\title[Generalized Star Configurations]{Symbolic powers and free resolutions of generalized star configurations of hypersurfaces}

\author[Kuei-Nuan Lin, Yi-Huang Shen]{Kuei-Nuan Lin and Yi-Huang Shen}


\thanks{2010 {\em Mathematics Subject Classification}.
    13A15, 
    13A50, 
    13D02, 
    14N20, 
    52C35. 
}

\thanks{Keyword: 
    Betti numbers, fold products, linear quotients, star configuration, symbolic power
}

\address{The Penn State University, Department of Mathematics, Greater Allegheny Campus, McKeesport, PA, 15132, USA}
\email{kul20@psu.edu}

\address{Key Laboratory of Wu Wen-Tsun Mathematics, Chinese Academy of Sciences, School of Mathematical Sciences, University of Science and Technology of China, Hefei, Anhui, 230026, P.R.~China}
\email{yhshen@ustc.edu.cn} 

\begin{abstract}
    As a generalization of the ideals of star configurations of hypersurfaces, we consider the $a$-fold product ideal $I_a(f_1^{m_1}\cdots f_s^{m_s})$ when ${f_1,\dots,f_s}$ is a sequence of generic forms and $1\le a\le m_1+\cdots+m_s$.  Firstly, we show that this ideal has complete intersection quotients when these forms are of the same degree and essentially linear.  Then we study its symbolic powers while focusing on the uniform case with $m_1=\cdots=m_s$.  For large $a$, we describe its resurgence and symbolic defect. And for general $a$, we also investigate the corresponding invariants for meeting-at-the-minimal-components version of symbolic powers.
\end{abstract}

\maketitle

\section{Introduction}
Let $\KK$ be an infinite field.  Partly due to the rich combinatorial structure and the ability of exhibiting extremal numerical behavior, star configurations of points in $\PP_\KK^n$ have attracted strong research interest. As a generalization, the next step is to consider certain union of complete intersection subschemes obtained by intersecting some hypersurfaces in $\PP_\KK^n$.

To be accurate, let $R=\KK[x_0,\dots,x_n]$ be a standard graded polynomial ring over $\KK$ and denote its graded maximal ideal by $\frakm$.  Let $\calF=\{f_1,\dots,f_s\}$ be a set of forms in $R$ with $s\ge n+1$ and consider the hypersurfaces $\calH=\{H_1,\dots,H_s\}$ defined by them in $\PP_{\KK}^n$. Suppose that these forms are \emph{$c$-generic} in the sense that any subset of size at most $c+1$ will form a regular sequence. Then, we will obtain a \emph{star configuration of hypersurfaces} of codimension $c$:
\[
    V_c(\calH,\PP_{\KK}^n)\coloneqq \bigcup_{1\le i_1<\cdots<i_c\le s}
    \left(H_{i_1}\cap \cdots \cap H_{i_c}\right).
\]

It has been of great interest to study various algebraic, geometric and combinatorial properties of star configurations; see for instance \cite{arXiv:1907.04288}, \cite{MR3003727}, \cite{MR3683102}, \cite{arXiv:1907.08172} and the references therein.  The object we are mostly interested here is the defining ideal 
\begin{equation}
    I_{c,\calF} \coloneqq \bigcap_{1\le i_1<\cdots<i_c\le s}\braket{f_{i_1},\dots,f_{i_c}}. 
    \label{def:star-configuration-ideal} 
\end{equation}
It was observed by Geramita et al.~in \cite{MR3683102} that the study of the minimal graded free resolutions of $I_{c,\calF}$ and its symbolic powers can be reduced to the linear monomial case, i.e., when $s=n+1$ and $\calF=\{x_0,x_1,\dots,x_n\}$.  Consequently, these symbolic power ideals will have linear-like resolutions.  

This idea was later made more precise by Mantero in \cite{arXiv:1907.08172} by the notion of Koszul stranded Betti table; see \Cref{def:Koszul-table} below.  Actually he showed that these ideals have complete intersection quotients.  Independently, in the linear monomial case, Biermann et al.~\cite{arXiv:1907.04288} showed that these symbolic power ideals are symmetry strongly shifted and consequently have linear quotients. 

We want to remark at this moment that the ideal $I_{c,\calF}$ above is the specialization of the Stanley--Reisner ideal of a matroid complex. In particular, it is Cohen--Macaulay and consequently a basic double $G$-linkage technique can be applied. Notice that this is crucial in the discussions of \cite{MR3003727} and \cite{MR3683102}, since it allows one to describe minimal generating sets of the ideal $I_{c,\calF}$ and its symbolic powers.  And this is of course the starting point of the description of minimal free resolutions.

The defining ideal $I_{c,\calF}$ in \eqref{def:star-configuration-ideal} can also be studied from a different point of view. Fix positive integers $a,m_1,\dots,m_s$ such that $a\le m_1+\cdots+m_s$. Now, consider the ideal generated by the $a$-fold products of the forms $f_1,\dots,f_s$ with the multiplicities $m_1,\dots,m_s$ respectively:
\begin{equation*}
    I_a(f_1^{m_1}\cdots f_s^{m_s})\coloneqq \Braket{f_1^{n_1}\cdots f_s^{n_s} : 0\le n_i\le m_i \text{ for each $i$ such that }\sum_{i}n_i=a}.
    \label{eqndef:a-fold-product-ideal}
\end{equation*}
The ideal defined above is known as the \emph{$a$-fold product ideal} of the corresponding configuration
\[
    \underbrace{f_1,\dots,f_1}_{m_1},\underbrace{f_2,\dots,f_2}_{m_2},\dots,\underbrace{f_s,\dots,f_s}_{m_s}.
\]
It follows from \cite[Theorem 3.3 and Example 3.4]{MR3683102} that $I_{c,\calF}$ coincides with $I_{s-c+1}(f_1f_2\cdots f_s)$ for $m_1=\cdots=m_s=1$.

The $a$-fold product ideal of linear forms was originally introduced as a nice tool for determining the minimum distance of linear codes in the coding theory.  They also emerge naturally when dealing with higher order Orlik--Terao algebra of hyperplane arrangements.  Intricate algebraic and combinatorial properties of $I_{a}(f_1^{m_1}\cdots f_s^{m_s})$ have also attracted the attention of many researchers; see for instance \cite{zbMATH06759435},  \cite{MR3864202}, \cite{MR2480571}, \cite{MR2679386}, \cite{MR3134006} and the references therein.  

We will pay a closer attention to the ideal $I_a(f_1^{m_1}\cdots f_s^{m_s})$ within this note.  The first aim is to study its Betti table.  For this purpose, throughout this note, we will always assume the following assumption.

\begin{Setting}
    \label{main-setting}
    Let $\calF=\{f_1,\dots,f_s\}$ be a set of forms in $R=\KK[x_0,\dots,x_n]$ and suppose that these forms are \emph{generic}, i.e., they are $n$-generic. Furthermore, fix some positive integers $a,m_1,\dots,m_s$ with $a\le m_1+\cdots+m_s$.
\end{Setting}

Recall that in \cite[Definition 6.5]{arXiv:1907.08172} Mantero introduced the following notion.

\begin{Definition}
    \label{def:Koszul-table}
    Let $1\le n_1<\cdots<n_r$ be integers and $I$ a homogeneous ideal in $R$ generated in degrees $n_1,\dots,n_r$. We say that $I$ has a \emph{Koszul stranded Betti table} if and only if there exists a positive integer $d$ such that the graded Betti number
    \[
        \beta_{i,j}(R/I)\ne 0 \quad \text{only if}\quad j\in \Set{n_h+d(i-1):h=1,\dots,r}
    \]
    for $i\ge 1$.
\end{Definition}

Notice that if $r=d=1$ in the above definition, then the ideal $I$ has a linear resolution. Meanwhile, under the \Cref{main-setting}, if all the forms in $\calF$ are linear, then it is already known by Toh\v{a}neanu and Xie \cite[Theorem 2.3]{arXiv:1906.08346} that the ideal $I_a(f_1^{m_1}\cdots f_s^{m_s})$ has a linear resolution. Inspired by this achievement, we are interested in the following question.

\begin{Question}
    \label{ques-Koszul-Betti}
    Under the \Cref{main-setting}, is it true that the ideal $I_a(f_1^{m_1}\cdots f_s^{m_s})$ has a Koszul stranded Betti table?
\end{Question}

Meanwhile, inspired by the recent work of Toh\v{a}neanu and his coauthors, we are also interested in the following questions.

\begin{Question}
    \label{colon-conj}
    Under the \Cref{main-setting}, is it true that
    \begin{equation}
        I_{a+1}(f_1^{m_1}\cdots f_{s-1}^{m_{s-1}}f_s^{m_s+1}):f_s= 
        I_{a}(f_1^{m_1}\cdots f_{s-1}^{m_{s-1}}f_s^{m_s})
        \label{eqn:colon-ideal-conj}
    \end{equation}
    for all $a\le m_1+\cdots+m_s$?
\end{Question}

\begin{Question}
    \label{main-conj}
    Under the \Cref{main-setting}, is it true that
    \[
        I_a(f_1^{m_1}\cdots f_s^{m_s})= \bigcap_{c=1}^{s} \left( 
            \bigcap_{1\le i_1<\cdots < i_c\le s} \braket{f_{i_1},\dots,f_{i_c}}^{\mu_a(i_1,\dots,i_c)}
        \right),
    \]
    for 
    \begin{equation}
        \mu_a(i_1,\dots,i_c)\coloneqq\max\Set{0, a-\sum_{j\in [s]\setminus \{i_1,\dots,i_c\}}m_j}?
        \label{def:mu_a}
    \end{equation}
    Here, any ideal with non-positive power is replaced by the ring $R$. And as usual, $[s]$ is the set $\{1,2,\dots,s\}$. 
\end{Question}

Regarding the \Cref{colon-conj}, when all the forms of $\calF$ are linear, Anzis, Garrousian and Toh\v{a}neanu showed in \cite[Proposition 3.5]{zbMATH06759435} that the $a$-fold product ideal having a linear resolution is closely related to the colon ideal having the expected form as in \eqref{eqn:colon-ideal-conj}, even without the generic assumption.  If the linear forms in $\calF$ are generic, then Toh\v{a}neanu and Xie provided a positive answer to the \Cref{colon-conj} in the proof of \cite[Theorem 2.3]{arXiv:1906.08346}, as a byproduct of establishing the linear resolution property of the $a$-fold product ideal.  For their proof, a positive answer to the \Cref{main-conj}  is vital; cf.~\cite[Corollary 2.4]{arXiv:1906.08346}. And they used this to confirm a conjecture of Geramita, Harbourne and Migliore (\cite[Conjecture 4.1]{MR3003727}) regarding a primary-decomposition type formula for powers of ideals of star configuration of hyperplanes.

Indeed, Geramita, Harbourne and Migliore themselves proved their own conjecture up to saturation in \cite[Corollary 4.9]{MR3003727}. Using a similar technique, we will show in \Cref{prop:sat} that a similar phenomenon also happens for generic configurations of hypersurfaces. It is worth pointing out at this stage that the ideal $I_a(f_1^{m_1}\cdots f_s^{m_s})$ is not unmixed in general (even when $m_1=\cdots=m_s$), let alone Cohen--Macaulay.

Notice that with regard to general ideals lacking mutigrading structure, even for monomial ideals in the generic forms in $\calF$, weird phenomena emerge for the descriptions of associated primes, containment problems and the colon operations; see for example \cite[Remark 3.2]{arXiv:1907.08172}. Therefore, one won't be surprised to see computations by the software \texttt{Macaulay2} \cite{M2} showing that the three questions raised above have negative answers in general. Therefore, we are obliged to impose a technical condition as being \emph{strongly generic}. This new term means that in addition to $\calF$ being generic and the forms there sharing a common degree, the forms actually come from a common $\KK$-linear space of $\KK$-dimension $\dim(R)=n+1$. Although this assumption seems artificial, it is still natural in the sense that generic set of linear forms are strongly generic. Notice that the latter is the condition needed when we are dealing with star configurations of hyperplanes.  

To some extent, the strongly generic terminology simply means being generic and essentially linear. And when this strong condition is satisfied, we show with ease in \Cref{thm:sg-1} that all the three questions above have positive answers. 

As the first main contribution of this note, we indeed prove in \Cref{thm:strongly-generic-ciq} that the $a$-fold product ideal $I=I_a(f_1^{m_1}\cdots f_s^{m_s})$ of generic forms will have complete intersection quotients under the strongly generic condition. This will particularly imply that the $a$-fold product ideal of generic \emph{linear} forms has a linear resolution, recovering a key result of Toh\v{a}neanu and Xie in \cite{arXiv:1906.08346}. Our approach is inspired by the viewpoint of Geramita, Harbourne, Migliore and Nagel in \cite{MR3683102} and the recent work of Mantero in \cite{arXiv:1907.08172}.  Since we don't have to dwell on the primary-decomposition type problems, this approach is more straightforward, and hence shorter when compared with the work in \cite{arXiv:1906.08346}. 

The argument for the complete intersection quotients actually bears more fruit than one generally expects at the first glance. It allows us to compare its Betti table with that of the prototype monomial ideal, i.e., when the forms $f_1,\dots,f_s$ are actually ring variables. In particular, an upper bound of the projective dimension of the ideal $I_a(f_1^{m_1}\cdots f_s^{m_s})$ can be easily described; see our \Cref{cor:compare-projdim}. To obtain such a comparison, we don't need the strongly generic assumption in some nice cases. And this benefits us a lot.

In the rest of the note, we focus on the uniform subclass $I_a(f_1^b\cdots f_s^b)$ under some sufficient generic assumptions. 
In other words, we don't require it to be $n$-generic, let alone the strong genericness.
Notice that in the star configuration case, $b=1$.  Therefore, we can also call it the defining ideal of \emph{generalized star configuration of hypersurfaces}.  Whence, the aforementioned information of the projective dimension will play an important role. Recall that we already have a primary-decomposition type formula for the saturation of $I_a(f_1^b\cdots f_s^b)$. After sealing these two parts together, we can handle the associated primes of $I_a(f_1^b \cdots f_s^b)$ more accurately. In particular, we will feel more at ease when manipulating the symbolic powers of $I_a(f_1^b\cdots f_s^b)$. 

Recall that if $I$ is a homogeneous ideal of a Noetherian domain $R$, for every positive integer $m$, the \emph{$m$-th symbolic power} of $I$ is the homogeneous ideal
\begin{equation}
    I^{(m)}\coloneqq \bigcap_{\frakp\in\Ass(R/I)}I^m R_{\frakp}\cap R.
    \label{eqn:symbolic-power}
\end{equation} 
For instance, when $I$ is the defining ideal of a reduced affine scheme over an algebraically closed field of characteristic zero, Zariski and Nagata showed that $I^{(m)}$ is generated by the polynomials whose partial derivatives of orders up to $m-1$ vanish on this scheme; c.f.~\cite{MR535850}.  
The research of related topics has continuously attracted the eyes of many researchers; see for instance the recent survey \cite{MR3779569} and the references therein. 

Regarding symbolic powers, the two constants that we are mostly interested in here are the resurgence and the symbolic defect. Recall that the \emph{resurgence} of $I$ is defined to be 
\begin{equation*}
    \rho(I)\coloneqq \sup\Set{\frac{m}{r}: I^{(m)}\nsubseteq I^r}.
\end{equation*}
It was pointed out in \cite{MR3683102} that there are very few results determining the resurgence of the ideal of a subscheme whose dimension is at least one and whose codimension is at least two, apart from ideal of cones and certain monomial ideals. 

Meanwhile, the \emph{symbolic defect} of $I$, as the first estimate of the size of the symbolic power, is defined to be
\begin{equation*}
    \sdefect(I,m)\coloneqq \mu(I^{(m)}/I^m)
\end{equation*}
for each positive integer $m$. Here, $\mu$ gives the minimal number of generators of the corresponding graded module. Symbolic defect of star configurations has recently been studied by \cite{arXiv:1907.04288}, \cite{MR3906569} and \cite{arXiv:1907.08172}, to name a few. 

Based on the foundation laid above, we are able to scrutinize the symbolic powers of $I_a(f_1^b\cdots f_s^b)$ when $a$ is large. As an application, we give closed formulas for the resurgence and symbolic defects in \Cref{Cor:sdefect-rho-strongly-generic}. If paying more attention to these two quantities, then one realizes that a delightful description of a minimal generating set of the symbolic powers of $I_a(f_1^b\cdots f_s^b)$ is inevitable. This is accomplished in \Cref{thm:mingens-strongly-generic}. It is worth pointing out here that the corresponding part in \cite{MR3683102} depends heavily on the application of the basic double $G$-linkage technique, which requires the Cohen--Macaulay condition. But the latter is not desirable here in general. And this embodies the second main contribution of this note.

For more general $a$, the uniform $a$-product ideal $I_a(f_1^b\cdots f_s^b)$ will not be saturated. Whence, the symbolic powers coincide with corresponding standard powers, making the direct probe of this topic dull.  Therefore, we take a different path. Notice that there is another algebraic approach in the literature for treating symbolic objects, namely, one only takes intersections over minimal primes of $I$ on the right side of \eqref{eqn:symbolic-power}; see for instance \cite[Definition 4.3.22]{MR3362802}. To avoid confusion in notation, we will call it the \emph{$m$-th $*$-symbolic power} of $I$, namely, we will define
\begin{equation*}
    I^{(m)_*}\coloneqq \bigcap_{\frakp\in\Min(I)} (I^m R_\frakp \cap R).
\end{equation*}
Numous studies were also devoted to the symbolic powers along this line; see, for instance, the recent papers \cite{arXiv:1702.01766} and \cite{MR4022079}.  

Regarding symbolic powers of this flavor, the terminologies of \emph{$*$-resurgence} and \emph{$*$-symbolic defect} emerge naturally as well. In contrast, the argument involved for determining these two constants is similar, but more direct. And the outcome is included in the last section of this paper. As an unexpected harvest, the $*$-symbolic defect encodes information regarding whether $I_a(f_1^b\cdots f_s^b)$ is a power of the star configuration ideal $I_{c,\calF}$ for some $c$; see \Cref{prop:compare-power-star-symbolic-power}.  

We also want to point out in the end of this section that Conca and Tsakiris \cite{arXiv:1910.01955} recently considered the ideal of subspace arrangements, which is an ideal of fold product of different flavor. Under some generic condition, the ideal they considered can also be linked to discrete polymatroids. In particular, the ideal will have linear quotients.

\section{Generic case}
The main result of this section is the primary-decomposition type formula of the $a$-fold product ideal $I_a(f_1^{m_1}\cdots f_s^{m_s})$ up to saturation in \Cref{prop:sat}. This result has two happy consequences. Firstly, if the $a$-fold product ideal is a priori saturated, then it leads to the expected formula that we are seeking. Secondly, if the configuration is uniform in the sense that $m_1=\cdots=m_s$, then it paves the path for our exploration of the symbolic powers of this $a$-fold product ideal. Both are needed in later sections.

\begin{Remark}
    \label{key-facts}
    Throughout this note, we will make substantial use of the following facts.
    \begin{enumerate}[a]
        \item Let $T=\KK[z_1,\dots,z_s]$ and $R=\KK[x_0,\dots,x_n]$ be two polynomial rings over a field $\KK$. Let $f_1,\dots,f_s$ be an $R$-regular sequence of homogeneous elements.  Define a homomorphism $\varphi:T\to R$ induced by $\varphi(z_i)=f_i$ for $1\le i\le s$. Then $\varphi$ is flat.  Furthermore, if $I$ is a monomial ideal in $T$ and $\bdF_{\bullet}$ is a graded minimal free resolution of $T/I$ over $T$. Then $\bdF_{\bullet}\otimes R$ is a graded minimal free resolution of $R/\varphi(I)$ over $R$. In particular, $I$ and the ideal $\varphi(I)$ have the same graded Betti numbers over $T$ and $R$ respectively, except possibly with shifts which depend on the degrees of the $f_i$.  This fact was stated in \cite[Lemma 3.1]{MR3683102} and its proof.
        \item If a homomorphism $\varphi:T\to R$ between Noetherian rings is
            flat, then for arbitrary ideals $I_1,I_2$ in $T$, we will have
            $\varphi(I_1\cap I_2)=\varphi(I_1)\cap\varphi(I_2)$ as well as
            $\varphi(I_1:I_2)=\varphi(I_1):\varphi(I_2)$, by \cite[Theorem
            7.4]{MR879273}. 
    \end{enumerate}
\end{Remark}

To begin our voyage of generalized star configuration of hypersurfaces, we first notice that both \Cref{colon-conj} and \Cref{main-conj} have positive answers for monomial configurations, i.e., when $\calF$ is a set of ring variables. Indeed, we have

\begin{Lemma}
    \label{lem:reg-case}
    If $f_1,\dots,f_s$ is an $R$-regular sequence of homogeneous elements in
    $R=\KK[x_0,\dots,x_n]$, then
    \[
        I_a(f_1^{m_1}\cdots f_s^{m_s})= \bigcap_{c=1}^{s} \left( 
            \bigcap_{1\le i_1<\cdots < i_c\le s} \braket{f_{i_1},\dots,f_{i_c}}^{\mu_a(i_1,\dots,i_c)}
        \right)
    \]
    and
    \[
        I_{a+1}(f_1^{m_1}\cdots f_{s-1}^{m_{s-1}}f_s^{m_s+1}):f_s= 
        I_{a}(f_1^{m_1}\cdots f_{s-1}^{m_{s-1}}f_s^{m_s}).
    \]
    Furthermore, the ideal $I_a(f_1^{m_1}\cdots f_s^{m_s})$ has a Koszul stranded Betti table. 
\end{Lemma}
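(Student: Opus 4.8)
The plan is to deduce the whole statement from a purely combinatorial computation with monomial ideals, by means of the flatness recorded in \Cref{key-facts}. Put $T=\KK[z_1,\dots,z_s]$ and let $\varphi\colon T\to R$ be the ring map with $\varphi(z_i)=f_i$. Since $f_1,\dots,f_s$ is an $R$-regular sequence, $\varphi$ is flat, hence commutes with finite intersections and with colon ideals (\Cref{key-facts}(b)), and it commutes with forming powers of ideals for trivial reasons. As $\varphi$ carries $I_a(z_1^{m_1}\cdots z_s^{m_s})$, each $\braket{z_{i_1},\dots,z_{i_c}}^{\mu}$, and $\braket{z_s}$ onto $I_a(f_1^{m_1}\cdots f_s^{m_s})$, $\braket{f_{i_1},\dots,f_{i_c}}^{\mu}$, and $\braket{f_s}$ respectively, extending a monomial identity along $\varphi$ yields the corresponding identity for the $f_i$. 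So I would first reduce the two displayed assertions to the case $f_i=z_i$; for the Betti-table assertion I would instead show that the resolution is linear over $T$ and then transfer it along $\varphi$, keeping track of the grading $\deg z_i=\deg f_i$.

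The engine of the monomial case is the membership rule: for every exponent vector $\alpha\in\NN^s$,
\[
    z^{\alpha}\in I_a(z_1^{m_1}\cdots z_s^{m_s})\iff \sum_{i=1}^{s}\min(\alpha_i,m_i)\ge a,
\]
which holds because $z^{\alpha}$ is divisible by some generator $z^{\beta}$ (with $0\le\beta\le m$ componentwise and $|\beta|=a$) exactly when there is such a $\beta$ with $\beta_i\le\min(\alpha_i,m_i)$ for all $i$, and such a $\beta$ exists iff the largest attainable value $\sum_i\min(\alpha_i,m_i)$ of $|\beta|$ is at least $a$. Writing $\mu_a(C)=\max\{0,\,a-\sum_{j\notin C}m_j\}$ as in \eqref{def:mu_a} and using that $z^{\alpha}\in\langle z_j:j\in C\rangle^{\mu}$ iff $\sum_{j\in C}\alpha_j\ge\mu$, I would check that $z^{\alpha}$ lies in the right-hand intersection of the first identity iff $\sum_{j\in C}\alpha_j\ge\mu_a(C)$ for every nonempty $C\subseteq[s]$, and then that this system is equivalent to $\sum_i\min(\alpha_i,m_i)\ge a$: the forward implication is immediate from $a\le\sum_{j\in C}\alpha_j+\sum_{j\notin C}m_j$, while the reverse follows by applying the inequality to $C_0=\{j:\alpha_j\le m_j\}$ (with $a\le\sum_i m_i$ covering the degenerate case $C_0=\emptyset$). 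The colon identity falls out of the same rule: $z^{\gamma}$ lies in $I_{a+1}(z_1^{m_1}\cdots z_{s-1}^{m_{s-1}}z_s^{m_s+1}):z_s$ iff $\sum_{i<s}\min(\gamma_i,m_i)+\min(\gamma_s+1,m_s+1)\ge a+1$, and since $\min(\gamma_s+1,m_s+1)=\min(\gamma_s,m_s)+1$ this says exactly $\sum_{i=1}^{s}\min(\gamma_i,m_i)\ge a$, i.e.\ $z^{\gamma}\in I_{a}(z_1^{m_1}\cdots z_{s-1}^{m_{s-1}}z_s^{m_s})$.

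For the Betti table I would observe that $J:=I_a(z_1^{m_1}\cdots z_s^{m_s})$ is minimally generated by the monomials $z^{\beta}$ with $0\le\beta\le m$ and $|\beta|=a$, all of degree $a$, and satisfies the polymatroidal exchange property: if $z^{\alpha},z^{\gamma}$ are two such generators with $\alpha_i>\gamma_i$, then $|\alpha|=|\gamma|$ forces $\alpha_j<\gamma_j\le m_j$ for some $j$, and $z^{\alpha-e_i+e_j}$ is again a generator (its exponent vector stays between $0$ and $m$ and has size $a$). Polymatroidal ideals have linear quotients, so $J$, being generated in a single degree, has a linear minimal free resolution; thus $\beta_{i,j}(T/J)\ne 0$ only for $j=a+(i-1)$, which is Koszul strandedness with $r=1$, $n_1=a$, $d=1$. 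Transferring this resolution along $\varphi$ through \Cref{key-facts}(a) carries the single linear strand to a single Koszul strand for $R/I_a(f_1^{m_1}\cdots f_s^{m_s})$, of width $d$ equal to the common degree of the $f_i$ (the situation in which the notion is used, in particular when $\calF$ is a set of variables, where $d=1$).

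None of these steps is genuinely hard; the content is carried by the membership rule together with the bookkeeping over the subsets $C$, which is exactly what forces the formula \eqref{def:mu_a} for $\mu_a$. The one point requiring care is the last transfer: \Cref{key-facts}(a) must be invoked with the grading $\deg z_i=\deg f_i$, so that the linear strand of $T/J$ is carried to an honest Koszul strand (of width the common degree of the $f_i$) rather than being spread over several internal degrees.
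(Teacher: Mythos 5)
Your proposal is correct, and its overall architecture coincides with the paper's: both reduce to the case $f_i=z_i$ via the flat specialization $\varphi\colon T\to R$ of \Cref{key-facts} and then transfer the intersection, the colon, and the resolution. The only real divergence is in the base case. The paper simply cites Toh\v{a}neanu--Xie \cite{arXiv:1906.08346}*{Theorem 2.3 and Corollary 2.4} for the monomial statements, whereas you prove them from scratch via the membership criterion $z^{\alpha}\in I_a(z_1^{m_1}\cdots z_s^{m_s})\Leftrightarrow\sum_i\min(\alpha_i,m_i)\ge a$, the bookkeeping over subsets $C$ that produces $\mu_a(C)$, and the polymatroidal exchange property for the linear resolution (the paper does invoke the Veronese-type/polymatroidal structure, but only later, in the proof of \Cref{thm:strongly-generic-ciq}). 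All of these verifications check out --- in particular the identity $\min(\gamma_s+1,m_s+1)=\min(\gamma_s,m_s)+1$ does make the colon statement immediate, and since both sides of each identity are monomial ideals it suffices to test monomials. What your route buys is a self-contained, fully elementary proof of the lemma with no external dependency; what the paper's route buys is brevity. You are also right to flag that the Koszul-strandedness transfers cleanly only when the $f_i$ share a common degree $d$ (otherwise the single linear strand of $T/J$ gets spread over several internal degrees after regrading); the paper's own \Cref{key-facts}(a) carries the same caveat implicitly ("except possibly with shifts which depend on the degrees of the $f_i$"), and the lemma is only ever applied in the equigenerated setting, so your explicit restriction is if anything more careful than the original.
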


Recall that the notation $\mu_a(i_1,\dots,i_c)$ was explained previously in
\eqref{def:mu_a}. And the assumption of $s\ge n+1$ is not required here.  

\begin{proof}
    The statements hold when $f_1,\dots,f_s$ is the sequence of variables
    $x_0,\dots,x_{s-1}$; see for instance \cite[Theorem 2.3 and Corollary
    2.4]{arXiv:1906.08346} and their proofs. 

    For the general case, consider the following treatment as in
    \cite{MR3683102}. Let $T=\KK[z_1,\dots,z_s]$ and consider the ring
    homomorphism 
    \[
        \varphi: T\to R,\quad z_i\mapsto f_i.
    \]
    As $\varphi$ is flat, it remains to apply the facts mentioned in \Cref{key-facts}. 
\end{proof}

\begin{Lemma}
    \label{I-containment}
    Under the \Cref{main-setting}, we have
    \[
        I_a(f_1^{m_1}\cdots f_s^{m_s})\subseteq \bigcap_{c=1}^{n} \left( 
            \bigcap_{1\le i_1<\cdots < i_c\le s} \braket{f_{i_1},\dots,f_{i_c}}^{\mu_a(i_1,\dots,i_c)}
        \right).
    \]
\end{Lemma}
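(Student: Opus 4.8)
The plan is to reduce the containment to a pointwise check on generators of the left-hand side, and for each such generator verify membership in every factor on the right.

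Let me set up notation. A typical generator of $I_a(f_1^{m_1}\cdots f_s^{m_s})$ has the form $g = f_1^{n_1}\cdots f_s^{n_s}$ with $0\le n_i\le m_i$ and $\sum_i n_i = a$. Fix a codimension $c$ with $1\le c\le n$ and indices $1\le i_1<\cdots<i_c\le s$; write $S = \{i_1,\dots,i_c\}$ and $\mu = \mu_a(i_1,\dots,i_c) = \max\{0, a - \sum_{j\in[s]\setminus S}m_j\}$. I must show $g\in\braket{f_{i_1},\dots,f_{i_c}}^{\mu}$. If $\mu = 0$ there is nothing to prove, since that factor is the ring $R$. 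So assume $\mu > 0$, i.e. $\sum_{j\in[s]\setminus S}m_j < a$.

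The key step is a counting argument: among the exponents $n_i$ appearing in $g$, the part "supported off $S$" contributes at most $\sum_{j\in[s]\setminus S}n_j \le \sum_{j\in[s]\setminus S}m_j$. Hence the part "supported on $S$" satisfies $\sum_{j\in S}n_j = a - \sum_{j\in[s]\setminus S}n_j \ge a - \sum_{j\in[s]\setminus S}m_j = \mu$. Therefore $g$ is divisible by $\prod_{j\in S}f_j^{n_j}$, a product of exactly $\sum_{j\in S}n_j\ge\mu$ forms drawn from $\{f_{i_1},\dots,f_{i_c}\}$ (with repetition), hence $g\in\braket{f_{i_1},\dots,f_{i_c}}^{\mu}$. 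Since this holds for every choice of $c$ and every $c$-subset, $g$ lies in the big intersection on the right; as $g$ ranges over all generators of $I_a(f_1^{m_1}\cdots f_s^{m_s})$, the containment follows.

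I do not expect a genuine obstacle here; the statement is essentially combinatorial and holds for any forms (the genericity hypothesis of \Cref{main-setting} is not even used, and indeed the reverse containment is the subtle direction, addressed later via \Cref{prop:sat}). The only point requiring a sentence of care is the edge case $\mu_a(i_1,\dots,i_c)\le 0$, where one invokes the stated convention that the corresponding factor is replaced by $R$, so the containment is automatic there. One should also note that the range of $c$ is taken only up to $n$ rather than $s$, which is harmless: restricting the outer intersection to fewer factors only enlarges the right-hand side, so the containment is if anything weaker than the analogous statement with $c$ running to $s$.
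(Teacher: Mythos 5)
Your proof is correct, and it is essentially the argument the paper has in mind: the paper simply defers to the proof of Lemma 2.1 of Toh\v{a}neanu--Xie, which is exactly this exponent-counting observation that $\sum_{j\in S}n_j\ge a-\sum_{j\notin S}m_j$ forces each generator into $\braket{f_{i_1},\dots,f_{i_c}}^{\mu_a(i_1,\dots,i_c)}$. Your remarks that genericity is not needed for this direction and that the edge case $\mu\le 0$ is handled by the convention are both accurate.
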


\begin{proof}
    It is not difficult to argue as in the proof of \cite[Lemma 2.1]{arXiv:1906.08346}.
\end{proof} 

\begin{Theorem}
    \label{prop:sat}
    Under the \Cref{main-setting}, the equality in \Cref{main-conj} holds up to
    saturation, i.e.,
    \begin{equation}
        I_a(f_1^{m_1}\cdots f_s^{m_s})^{\sat}=
        \bigcap_{c=1}^{n} \left( 
            \bigcap_{1\le i_1<\cdots < i_c\le s} \braket{f_{i_1},\dots,f_{i_c}}^{\mu_a(i_1,\dots,i_c)}
        \right).
        \label{eqn-sat}
    \end{equation}
\end{Theorem}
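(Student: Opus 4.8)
The plan is to prove the identity after localizing at every prime ideal different from $\frakm$, and then upgrade the local comparison to an equality of saturations using the containment of \Cref{I-containment}. Write $I\coloneqq I_a(f_1^{m_1}\cdots f_s^{m_s})$ and let $J$ denote the right-hand side of \eqref{eqn-sat}. I would establish three statements: (i) $J$ is saturated; (ii) $I\subseteq J$, which is exactly \Cref{I-containment}; and (iii) $IR_\frakp=JR_\frakp$ for every prime $\frakp\neq\frakm$. Granting these, $J/I$ is a finitely generated $R$-module with $\Supp(J/I)\subseteq\{\frakm\}$, hence $\frakm^N J\subseteq I$ for some $N$, so $J\subseteq I:\frakm^N\subseteq I^{\sat}$; combined with $I^{\sat}\subseteq J^{\sat}=J$ this yields $I^{\sat}=J$.

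For (i): by genericity each $\braket{f_{i_1},\dots,f_{i_c}}$ with $c\le n$ is generated by a regular sequence, hence is a complete intersection of height $c$; consequently $R/\braket{f_{i_1},\dots,f_{i_c}}^{\mu}$ is Cohen--Macaulay of dimension $n+1-c\ge1$ for every $\mu\ge1$, so $\frakm\notin\Ass\!\big(R/\braket{f_{i_1},\dots,f_{i_c}}^{\mu}\big)$ and $\braket{f_{i_1},\dots,f_{i_c}}^{\mu}$ is saturated (the factors with $\mu=0$ equal $R$ and are harmless). A finite intersection of saturated ideals is saturated, so $J$ is saturated.

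The crux is (iii). Fix a prime $\frakp\neq\frakm$. At most $n$ of the forms $f_i$ can lie in $\frakp$: if $n+1$ of them did, their ideal would be a complete intersection of height $n+1=\dim R$, hence $\frakm$-primary, forcing $\frakp\supseteq\frakm$. Relabel so that $f_1,\dots,f_t\in\frakp$ with $t\le n$, while $f_{t+1},\dots,f_s$ become units in $R_\frakp$; note that $f_1,\dots,f_t$, being a subsequence of a generic family with $t\le n$, is $R$-regular, hence $R_\frakp$-regular. Put $e\coloneqq\max\{0,a-\sum_{j>t}m_j\}=\mu_a(1,\dots,t)$, with the convention $I_0(\cdots)=R$. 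I claim
\[
    IR_\frakp \;=\; I_e(f_1^{m_1}\cdots f_t^{m_t})R_\frakp \;=\; JR_\frakp .
\]
For the left equality: modulo units every generator $f_1^{n_1}\cdots f_s^{n_s}$ of $I$ becomes $f_1^{n_1}\cdots f_t^{n_t}$ in $R_\frakp$; the truncated vectors $(n_1,\dots,n_t)$ so obtained include all those with $0\le n_i\le m_i$ and $\sum_i n_i=e$ (using $0\le e\le\min\{a,\,m_1+\cdots+m_t\}$), and every truncated monomial that occurs is a multiple of one of these, whence $IR_\frakp=I_e(f_1^{m_1}\cdots f_t^{m_t})R_\frakp$. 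For the right equality: after localizing, every factor $\braket{f_{i_1},\dots,f_{i_c}}$ with some $i_j>t$ becomes the unit ideal, so $JR_\frakp=\big(\bigcap_{\emptyset\ne K\subseteq[t]}\braket{f_k:k\in K}^{\mu_a(K)}\big)R_\frakp$; since $[s]\setminus K=\{t+1,\dots,s\}\cup([t]\setminus K)$ one computes $\mu_a(K)=\max\{0,\,e-\sum_{k\in[t]\setminus K}m_k\}$, and $e\le m_1+\cdots+m_t$, so applying \Cref{lem:reg-case} to the regular sequence $f_1,\dots,f_t$ with parameter $e$ identifies this intersection with $I_e(f_1^{m_1}\cdots f_t^{m_t})$. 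This proves (iii).

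The step that requires the most care is (iii): beyond the reduction "only the first $t$ forms matter locally", one must verify the genericity bookkeeping — that at most $n$ of the $f_i$ lie in $\frakp$, that $f_1,\dots,f_t$ stays a regular sequence after localizing, that the reindexing $a\rightsquigarrow e$ turns $\mu_a(K)$ into exactly the multiplicity function of \Cref{lem:reg-case}, and that the hypothesis $e\le m_1+\cdots+m_t$ of that lemma is met — and one should first dispose of the degenerate cases $t=0$ and $e=0$, in which both sides localize to $R_\frakp$.
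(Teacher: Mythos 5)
Your proof is correct and follows essentially the same route as the paper's: reduce to a local statement at primes $\frakp\neq\frakm$, observe that at most $n$ of the $f_i$ lie in $\frakp$, and identify both localizations with the regular-sequence case of \Cref{lem:reg-case} applied to $f_1,\dots,f_t$ with the shifted parameter $e=a-\sum_{j>t}m_j$. You are somewhat more explicit than the paper about why the local equalities upgrade to an equality of saturations (namely that the right-hand side is itself saturated, being an intersection of powers of complete intersections of positive dimension), but the substance of the argument is the same.
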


\begin{proof}
    We will follow the technique in the proofs of \cite[Corollary 4.9]{MR3003727} and \cite[Proposition 2.2]{arXiv:1906.08346}. Regarding the equality in \eqref{eqn-sat}, we have at least the containment $\text{LHS}\subseteq\text{RHS}$, by \Cref{I-containment}. Thus, it suffices to take any graded non-maximal ideal $\frakp$ containing $I_a(f_1^{m_1}\cdots f_s^{m_s})$ and show that the equality in \eqref{eqn-sat} holds locally with respect to $\frakp$.

    First of all, one can verify with ease that
    \[
        I_a(f_1^{m_1}\cdots f_s^{m_s})=\sum_{k=0}^{m_s}f_s^{k}I_{a-k}(f_1^{m_1}\cdots f_{s-1}^{m_{s-1}}).
    \]
    Since
    \[
        I_{a-m_s}(f_1^{m_1}\cdots f_{s-1}^{m_{s-1}})\supseteq \cdots \supseteq  I_{a-k}(f_1^{m_1}\cdots f_{s-1}^{m_{s-1}})\supseteq \cdots \supseteq I_{a}(f_{1}^{m_1}\cdots f_{s-1}^{m_{s-1}})
    \]
    for $0\le k\le m_s$, we will have
    \begin{equation}
        I_a(f_1^{m_1}\cdots f_s^{m_s})R_{\frakp}=I_{a-m_s}(f_1^{m_1}\cdots f_{s-1}^{m_{s-1}})R_{\frakp},
        \label{eqn:local-product}
    \end{equation}
    once $f_{s}\notin \frakp$.

    Now, without loss of generality, we may assume that $f_1,\dots,f_r\in \frakp$ while $f_{r+1},\dots,f_s\notin \frakp$. Because of the generic assumption on $\calF$, we have that $r\le \Ht(\frakp)<\Ht(\frakm)=n+1$. It is clear that
    \[
        \bigcap_{c=1}^{\min(s,n)} \left( 
            \bigcap_{1\le i_1<\cdots < i_c\le s} \braket{f_{i_1},\dots,f_{i_c}}^{\mu_a(i_1,\dots,i_c)}
        \right)R_\frakp=
        \bigcap_{c=1}^{r} \left( 
            \bigcap_{1\le i_1<\cdots < i_c\le r} \braket{f_{i_1},\dots,f_{i_c}}^{\mu_a(i_1,\dots,i_c)}
        \right)R_\frakp,
    \]
    while
    \[
        I_a(f_1^{m_1}\cdots f_s^{m_s})^{\sat} R_\frakp 
        = I_a(f_1^{m_1}\cdots f_s^{m_s}) R_\frakp
        = I_{a-\sum_{c=r+1}^{s}m_c}(f_1^{m_1}\cdots f_r^{m_r}) R_\frakp
    \]
    by applying the equality in \eqref{eqn:local-product} repeatedly.

    As the last step, it remains to verify that
    \[
        I_{a-\sum_{c=r+1}^{s}m_c}(f_1^{m_1}\cdots f_r^{m_r})
        =
        \bigcap_{c=1}^{r} \left( 
            \bigcap_{1\le i_1<\cdots < i_c\le r} \braket{f_{i_1},\dots,f_{i_c}}^{\mu_a(i_1,\dots,i_c)}
        \right).
    \]
    But this is exactly what we have shown in \Cref{lem:reg-case}.
\end{proof}

The decomposition in \eqref{eqn-sat} can be refined once we know more about the associated primes of $I_a(f_1^{m_1}\cdots f_s^{m_s})$.

\begin{Theorem}
    \label{prop:dec-c-generic} 
    Let $\calF=\{f_1,\dots,f_s\}$ be a set of $\hat{c}$-generic forms in $R=\KK[x_0,\dots,x_n]$. Let $a,m_1,\dots,m_s$ be positive integers with $a\le m_1+\cdots+m_s$. Suppose that $\Ht(\frakq)\le \hat{c}\le n$ for each $\frakq\in \Ass(R/I_a(f_1^{m_1}\cdots f_s^{m_s}))$. Then we have
    \begin{equation*}
        I_a(f_1^{m_1}\cdots f_s^{m_s})=
        \bigcap_{c=1}^{\hat{c}} \left( 
            \bigcap_{1\le i_1<\cdots < i_c\le s} \braket{f_{i_1},\dots,f_{i_c}}^{\mu_a(i_1,\dots,i_c)}
        \right).
    \end{equation*}
\end{Theorem}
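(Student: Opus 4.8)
The plan is to imitate the localization argument in the proof of \Cref{prop:sat}, but to intersect over the associated primes of $J\coloneqq I_a(f_1^{m_1}\cdots f_s^{m_s})$ rather than over all non-maximal graded primes; the height hypothesis is precisely what makes this possible. Write $D$ for the right-hand side of the asserted identity. The containment $J\subseteq D$ uses no genericity and is obtained exactly as in \Cref{I-containment}: if $f_1^{n_1}\cdots f_s^{n_s}$ is a generator of $J$ (so $0\le n_i\le m_i$ and $\sum_i n_i=a$) and $1\le i_1<\cdots<i_c\le s$ with $c\le\hat c$, then $\prod_{t=1}^{c}f_{i_t}^{n_{i_t}}\in\braket{f_{i_1},\dots,f_{i_c}}^{e}$ where $e=\sum_{t=1}^{c}n_{i_t}=a-\sum_{j\notin\{i_1,\dots,i_c\}}n_j\ge a-\sum_{j\notin\{i_1,\dots,i_c\}}m_j\ge\mu_a(i_1,\dots,i_c)$; hence the generator lies in $\braket{f_{i_1},\dots,f_{i_c}}^{\mu_a(i_1,\dots,i_c)}$.

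For the reverse containment I would use the standard fact that $J=\bigcap_{\frakq\in\Ass(R/J)}(JR_\frakq\cap R)$ (this is the case $m=1$ of \eqref{eqn:symbolic-power}, since $J^{(1)}=J$). Together with $J\subseteq D$ it then suffices to show $JR_\frakq=DR_\frakq$ for each $\frakq\in\Ass(R/J)$: granting this, $D\subseteq DR_\frakq\cap R=JR_\frakq\cap R$ for every such $\frakq$, and intersecting yields $D\subseteq J$. So fix $\frakq\in\Ass(R/J)$; it is a graded prime, and $\Ht\frakq\le\hat c\le n<n+1=\Ht\frakm$, so $\frakq\neq\frakm$. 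After renumbering, say $f_1,\dots,f_r\in\frakq$ and $f_{r+1},\dots,f_s\notin\frakq$. If $r\ge\hat c+1$, then $f_1,\dots,f_{\hat c+1}$ is a regular sequence (by $\hat c$-genericity) contained in $\frakq$, forcing $\Ht\frakq\ge\hat c+1$, a contradiction; hence $r\le\hat c$, and in particular $f_1,\dots,f_r$ is an $R$-regular sequence.

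Now localize at $\frakq$ and put $a'\coloneqq a-\sum_{j=r+1}^{s}m_j$, with the usual convention that a fold product ideal of non-positive index equals $R$. Exactly as in the proof of \Cref{prop:sat} --- using $I_a(f_1^{m_1}\cdots f_s^{m_s})=\sum_{k=0}^{m_s}f_s^{k}I_{a-k}(f_1^{m_1}\cdots f_{s-1}^{m_{s-1}})$, the chain of containments among the ideals $I_{a-k}(f_1^{m_1}\cdots f_{s-1}^{m_{s-1}})$, and the fact that $f_{r+1},\dots,f_s$ become units --- one peels off the last $s-r$ forms to obtain $JR_\frakq=I_{a'}(f_1^{m_1}\cdots f_r^{m_r})R_\frakq$. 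On the other side, every factor $\braket{f_{i_1},\dots,f_{i_c}}^{\mu_a(i_1,\dots,i_c)}$ of $D$ in which some $i_t>r$ becomes the unit ideal after localization and drops out, while for $\{i_1,\dots,i_c\}\subseteq[r]$ a direct computation gives $\mu_a(i_1,\dots,i_c)=\max\{0,\,a'-\sum_{j\in[r]\setminus\{i_1,\dots,i_c\}}m_j\}$; since $r\le\hat c$, this leaves
\[
    DR_\frakq=\Biggl(\,\bigcap_{c=1}^{r}\ \bigcap_{1\le i_1<\cdots<i_c\le r}\braket{f_{i_1},\dots,f_{i_c}}^{\max\{0,\,a'-\sum_{j\in[r]\setminus\{i_1,\dots,i_c\}}m_j\}}\,\Biggr)R_\frakq.
\]
If $a'\le 0$, both $JR_\frakq$ and $DR_\frakq$ equal $R_\frakq$. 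If $a'\ge 1$, then $1\le a'\le m_1+\cdots+m_r$, and \Cref{lem:reg-case} applied to the regular sequence $f_1,\dots,f_r$ (with multiplicities $m_1,\dots,m_r$ and fold $a'$) identifies the bracketed ideal with $I_{a'}(f_1^{m_1}\cdots f_r^{m_r})$; localizing, $DR_\frakq=I_{a'}(f_1^{m_1}\cdots f_r^{m_r})R_\frakq=JR_\frakq$. Thus $JR_\frakq=DR_\frakq$ in every case, and the theorem follows.

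The only genuinely new input over \Cref{prop:sat} is the use of the hypothesis on the heights of the associated primes, in the second paragraph, to force $r\le\hat c$ at each $\frakq\in\Ass(R/J)$; this is exactly what is needed so that \Cref{lem:reg-case} can be applied after localizing. I expect the main nuisance to be the routine bookkeeping in the third paragraph: respecting the non-positive-index convention, checking $1\le a'\le m_1+\cdots+m_r$, and verifying that the restriction of $\mu_a$ to subsets of $[r]$ is precisely the $\mu$-function of the smaller configuration $f_1^{m_1}\cdots f_r^{m_r}$ with parameter $a'$. Beyond that, the argument is a faithful transcription of the proof of \Cref{prop:sat}.
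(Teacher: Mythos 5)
Your proposal is correct and is essentially the paper's own argument: the paper's proof simply says to repeat the localization argument of Theorem \ref{prop:sat} while restricting to primes of height at most $\hat c$, which is exactly what you do (with the useful extra care of spelling out the reduction $JR_\frakq=I_{a'}(f_1^{m_1}\cdots f_r^{m_r})R_\frakq$, the identity for the restricted $\mu$-function, and the application of Lemma \ref{lem:reg-case} to the regular sequence $f_1,\dots,f_r$).
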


\begin{proof}
    The proof is similar to that of \Cref{prop:sat}. And one only needs to take prime ideals $\frakp$ of $R$ such that $\Ht(\frakp)\le \hat{c}$, by our assumption.
\end{proof}

\begin{Remark}
    \label{rmk:ass-cond}
    It follows from \cite[Proposition 1.2.13 and Theorem 1.3.3]{MR1251956} that $\Ht(\frakq)\le \projdim(R/I_a(f_1^{m_1}\cdots f_s^{m_s}))$ for each $\frakq\in \Ass(R/I_a(f_1^{m_1}\cdots f_s^{m_s}))$. Therefore, to apply \Cref{prop:dec-c-generic} efficiently, one needs to bound the projective dimension of $I_a(f_1^{m_1}\cdots f_s^{m_s})$.
\end{Remark}

Now, consider the defining ideal of the star configuration of hypersurfaces in
$\PP^n$
\[
    I_{c,\calF}\coloneqq \bigcap_{1\le i_1<\cdots < i_c\le s} \braket{f_{i_1},\dots,f_{i_c}}.
\]
It was shown in \cite[Theorem 3.3]{MR3683102} that 
\begin{equation}
    I_{c,\calF}=I_{s-c+1}(f_1\cdots f_s).
    \label{eqn:I-I}
\end{equation}
Indeed, it was shown in \cite[Theorem 3.6]{MR3683102} that for each positive
integer $m$, the symbolic power
\begin{equation}
    I_{c,\calF}^{(m)}=\bigcap_{1\le i_1<\cdots < i_c\le s} \braket{f_{i_1},\dots,f_{i_c}}^m.
    \label{eqn:symbolic-power-star-configuration}
\end{equation}
Meanwhile, it is clear that
\begin{equation}
    I_a(f_1^{m_1}\cdots f_s^{m_s})^m=I_{ma}(f_1^{mm_1}\cdots f_s^{mm_s})
    \label{eqn-power}
\end{equation}
for each positive integer $m$. Consequently,
\[
    I_{c,\calF}^m=
    (I_{s-c+1}(f_1\cdots f_s))^m=
    I_{m(s-c+1)}(f_1^m\cdots f_s^m).
\]
Therefore, we obtain the following corollary from \Cref{prop:sat}.

\begin{Corollary}
    Under the \Cref{main-setting}, the saturation of the power of the
    star configuration ideal can be expressed as
    \[
        (I_{c,\calF}^m)^{\sat}=I_{c,\calF}^{(m)} \cap I_{c+1,\calF}^{(2m)} \cap \cdots \cap I_{n,\calF}^{((n-c+1)m)},
    \]
    or, equivalently,
    \[
        (I_{s-c+1}(f_1\cdots f_s)^m)^{\sat} =\bigcap_{c\le k\le n}
        I_{s-k+1}(f_1\cdots f_s)^{( (k-c+1)m)} .
    \]
\end{Corollary}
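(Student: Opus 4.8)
The plan is to derive this corollary directly from \Cref{prop:sat} by specializing it to the uniform configuration $m_1=\cdots=m_s=m$ and then unwinding the resulting decomposition. First I would record the two bookkeeping identities already established in the excerpt: \eqref{eqn:I-I}, which gives $I_{c,\calF}=I_{s-c+1}(f_1\cdots f_s)$, and \eqref{eqn-power}, which gives $I_a(f_1^{m_1}\cdots f_s^{m_s})^m=I_{ma}(f_1^{mm_1}\cdots f_s^{mm_s})$. Combining them shows $I_{c,\calF}^m=I_{m(s-c+1)}(f_1^m\cdots f_s^m)$, so the left-hand side of the claimed formula is the saturation of a \emph{single} $a$-fold product ideal, the one with parameters $a=m(s-c+1)$ and $m_1=\cdots=m_s=m$. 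I would then note that this ideal fits \Cref{main-setting}: the forms $\calF$ are generic, and $1\le a\le ms=m_1+\cdots+m_s$ because $1\le c\le n<s$. Hence \Cref{prop:sat} applies to it.

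Next I would compute the exponents $\mu_a$ on the right-hand side of \eqref{eqn-sat} for this uniform choice. For a $k$-element subset $\{i_1,\dots,i_k\}\subseteq[s]$, its complement has $s-k$ elements each carrying multiplicity $m$, so by \eqref{def:mu_a}, $\mu_a(i_1,\dots,i_k)=\max\{0,\,m(s-c+1)-(s-k)m\}=\max\{0,\,m(k-c+1)\}$. Thus the exponent vanishes (and the bracket becomes $R$, dropping out of the intersection) exactly when $k\le c-1$, while for $k\ge c$ it equals $(k-c+1)m$. So \Cref{prop:sat} rewrites $(I_{c,\calF}^m)^{\sat}$ as $\bigcap_{k=c}^{n}\bigl(\bigcap_{1\le i_1<\cdots<i_k\le s}\braket{f_{i_1},\dots,f_{i_k}}^{(k-c+1)m}\bigr)$.

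Finally I would translate this back into the language of star configurations. By \eqref{eqn:symbolic-power-star-configuration} the inner intersection is precisely the symbolic power $I_{k,\calF}^{((k-c+1)m)}$, which yields the first displayed formula $(I_{c,\calF}^m)^{\sat}=I_{c,\calF}^{(m)}\cap I_{c+1,\calF}^{(2m)}\cap\cdots\cap I_{n,\calF}^{((n-c+1)m)}$; re-expressing each $I_{k,\calF}$ as $I_{s-k+1}(f_1\cdots f_s)$ through \eqref{eqn:I-I} then gives the equivalent second formula. I do not expect a genuine obstacle here, since all the substance is carried by \Cref{prop:sat}; the only points demanding care are the index-shift arithmetic in the $\mu_a$ computation and the (harmless) convention that a bracket raised to a non-positive power is the whole ring $R$.
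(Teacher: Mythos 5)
Your proposal is correct and matches the paper's own derivation: the text preceding the corollary records exactly the identities \eqref{eqn:I-I} and \eqref{eqn-power} to rewrite $I_{c,\calF}^m$ as the single fold-product ideal $I_{m(s-c+1)}(f_1^m\cdots f_s^m)$, applies \Cref{prop:sat}, and identifies the resulting intersections as symbolic powers via \eqref{eqn:symbolic-power-star-configuration}. Your exponent computation $\mu_a(i_1,\dots,i_k)=\max\{0,m(k-c+1)\}$ and the handling of the vanishing cases are exactly the intended bookkeeping.
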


Indeed, the latest equality can be easily generalized by considering a similar
decomposition of $I_a(f_1^b\cdots f_s^b)^m=I_{ma}(f_1^{mb}f_2^{mb}\cdots
f_s^{mb})$ of positive dimension, for positive integers $a,b$ and $m$.  To facilitate discussions in later sections, we will
fix the following notations.

\begin{Setting}
    \label{setting-uniform-fold-product}
    Let $a,b,c_0,n,s,\mu_a^0$ be positive integers such that 
    \[
        c_0\coloneqq s-\floor{\frac{a-1}{b}}, \quad
        \mu_a^0\coloneqq a-b(s-c_0), \quad s\ge n+1\quad
        \text{and} \quad
        b(s-n)+1\le a\le sb.
    \]
\end{Setting}

Here, we explain a little bit the origin of the notations chosen above.
Under such setting, for each component $\braket{f_{i_1},\dots,f_{i_c}}$ in
\eqref{eqn-sat} with $m_1=\cdots=m_s=b$, the exponent
$\mu_a({i_1},\dots,{i_c})=a-b(s-c)\ge 1$ precisely when $c\ge
s-\floor{\frac{a-1}{b}}=c_0$. Meanwhile,
\begin{equation}
    \mu_a^0=a-b(s-c_0)=a-b\floor{\frac{a-1}{b}}\in\{1,2,\dots,b\}.
    \label{eqn:mua0}
\end{equation}
Whence, the exponent in \eqref{eqn-sat} satisfies
\[
	\mu_a(i_1,\dots,i_c)=a-b(s-c)=a-b(s-c_0)+b(c-c_0)=\mu_a^{0}+b(c-c_0)
\]
for $c_0\le c\le s$. In order that $I_a(f_1^b\cdots f_s^b)$ is not zero dimensional, we have to restrict ourselves to those $a$'s such that $c_0\le n$, i.e., $a\ge b(s-n)+1$. To wrap up this short discussion, we remark that
\begin{equation}
a=b(s-c_0)+\mu_a^0 \qquad \text{with}\quad 1\le \mu_a^0\le b.
\label{eqn:a}
\end{equation}
This non-standard long division might be more intuitive for some readers when verifying various estimates and equalities in later sections.

Now, it follows directly from \Cref{prop:sat} and the equality
\eqref{eqn:symbolic-power-star-configuration} that we have the following
formula for the \emph{uniform $a$-fold product ideal} $I_a(\calF^b)\coloneqq
I_a(f_1^b\cdots f_s^b)$ in $R=\KK[x_0,\dots,x_n]$. We also call it the defining
ideal of \emph{generalized star configuration of hypersurfaces}.

\begin{Corollary}
    With the assumptions in Settings \ref{main-setting} and
    \ref{setting-uniform-fold-product}, we have
    \[
        (I_a(\calF^b)^m)^{\sat}=
        \bigcap_{c_0\le c\le n} I_{c,\calF}^{(m(\mu_a^0+b(c-c_0)))} 
    \]
    for each positive integer $m$.
\end{Corollary}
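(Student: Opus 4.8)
The plan is to derive this from \Cref{prop:sat} applied to an appropriate power of $I_a(\calF^b)$, combined with the two identities \eqref{eqn-power} and \eqref{eqn:symbolic-power-star-configuration} that are already on the table. First I would use \eqref{eqn-power} to rewrite
\[
	I_a(\calF^b)^m = I_{ma}(f_1^{mb}\cdots f_s^{mb}),
\]
and observe that the data $(ma; mb,\dots,mb)$ still falls under \Cref{main-setting}: the set $\calF$ is unchanged (hence still $n$-generic), and the inequality $ma\le s\cdot mb$ is exactly $a\le sb$, which is part of \Cref{setting-uniform-fold-product}. Hence \Cref{prop:sat} applies directly to $I_{ma}(f_1^{mb}\cdots f_s^{mb})$ and gives
\[
	(I_a(\calF^b)^m)^{\sat}=\bigcap_{c=1}^{n}\left(\bigcap_{1\le i_1<\cdots<i_c\le s}\braket{f_{i_1},\dots,f_{i_c}}^{\mu_{ma}(i_1,\dots,i_c)}\right),
\]
where the exponents $\mu_{ma}$ are computed with all multiplicities equal to $mb$.

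The next step is the exponent bookkeeping. In this uniform situation $\mu_{ma}(i_1,\dots,i_c)=\max\{0,\,ma-(s-c)mb\}=m\max\{0,\,a-(s-c)b\}=m\,\mu_a(i_1,\dots,i_c)$, so the exponent depends only on $c$. I would then split the outer intersection at $c_0$ exactly as in the discussion following \Cref{setting-uniform-fold-product}: when $c<c_0$ one has $a-b(s-c)\le 0$, so the exponent is $0$ and the corresponding factor is all of $R$ and can be discarded; when $c_0\le c\le n$ one has, by the non-standard long division \eqref{eqn:a}, the identity $\mu_a(i_1,\dots,i_c)=a-b(s-c)=\mu_a^0+b(c-c_0)\ge 1$, so the $c$-th factor equals $\bigcap_{1\le i_1<\cdots<i_c\le s}\braket{f_{i_1},\dots,f_{i_c}}^{m(\mu_a^0+b(c-c_0))}$. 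The hypothesis $b(s-n)+1\le a$ in \Cref{setting-uniform-fold-product} guarantees $c_0\le n$, so this range of $c$ is nonempty.

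Finally, for each $c$ with $c_0\le c\le n$, since $\calF$ is $n$-generic it is in particular $c$-generic, so $I_{c,\calF}$ is the defining ideal of a genuine codimension-$c$ star configuration and \eqref{eqn:symbolic-power-star-configuration} (that is, \cite[Theorem 3.6]{MR3683102}) rewrites the $c$-th factor as the symbolic power $I_{c,\calF}^{(m(\mu_a^0+b(c-c_0)))}$. Substituting this back yields precisely the claimed formula. There is essentially no obstacle here: the statement is a direct assembly of results already proved, so the only points requiring genuine attention are verifying that the genericity hypotheses transfer correctly (to $I_{ma}(f_1^{mb}\cdots f_s^{mb})$ via \Cref{main-setting}, and to each $I_{c,\calF}$ via $c\le n$), and that the index identities $c_0=s-\floor{(a-1)/b}$ and $\mu_a^0+b(c-c_0)=a-b(s-c)$ are applied consistently.
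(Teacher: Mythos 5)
Your proposal is correct and is essentially the paper's own derivation: the paper also obtains this corollary by writing $I_a(\calF^b)^m=I_{ma}(f_1^{mb}\cdots f_s^{mb})$ via \eqref{eqn-power}, applying \Cref{prop:sat}, discarding the components with $c<c_0$ where the exponent vanishes, and converting each remaining intersection into a symbolic power via \eqref{eqn:symbolic-power-star-configuration}. The exponent bookkeeping $\mu_{ma}(i_1,\dots,i_c)=m(\mu_a^0+b(c-c_0))$ and the verification that the hypotheses transfer are exactly the points the paper relies on in the discussion following \Cref{setting-uniform-fold-product}.
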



\section{Strongly generic case}

Unfortunately, but not surprisingly, a computation by the software \texttt{Macaulay2} \cite{M2} shows that 
all of \Cref{ques-Koszul-Betti}, \Cref{colon-conj} and \Cref{main-conj} have negative answers in general for quadratic generic forms in $\KK[x_0,x_1,x_2]$.  

Thus, in the following, due to the above technical obstruction, we have to inflict a stricter condition so that the forms we treat are essentially linear.  Suppose first that the forms in $\calF=\{f_1,\dots,f_s\}\subset R=\KK[x_0,\dots,x_n]$ have a common degree $d$. Then, we say that $\calF$ is \emph{strongly generic} if it is generic and 
\[
    \dim_{\KK}(\KK f_1+\cdots +\KK f_s)= n+1=\dim(R).
\]
Notice that for every $k\ge n+1$, we will have the ideals
\[
    \braket{f_1,\dots,f_{n+1}} = \braket{f_{i_1},\dots,f_{i_k}} \qquad \text{ for any $1\le i_1<\cdots<i_k\le s$}.
\]
This common $\frakm$-primary complete intersection ideal will be denoted by $\frakm_{\calF}$.

\begin{Remark}
    When $d=1$, then the set $\calF$ being strongly generic is equivalent to it being generic.
\end{Remark}

In this section, we will shift our focus to $a$-fold product ideals of strongly generic sets of forms. As a warm-up, we first notice that \Cref{lem:reg-case} can be strengthened and all of \Cref{ques-Koszul-Betti}, \Cref{colon-conj} and \Cref{main-conj} have positive answers in this situation.

\begin{Proposition}
    \label{thm:sg-1}
    Under the \Cref{main-setting}, suppose in addition that
    $\calF=\{f_1,\dots,f_s\}$ is strongly generic. Then, 
    \[
        I_a(f_1^{m_1}\cdots f_s^{m_s})= \bigcap_{c=1}^{s} \left( 
            \bigcap_{1\le i_1<\cdots < i_c\le s} \braket{f_{i_1},\dots,f_{i_c}}^{\mu_a(i_1,\dots,i_c)}
        \right)
    \]
    and
    \[
        I_{a+1}(f_1^{m_1}\cdots f_{s-1}^{m_{s-1}}f_s^{m_s+1}):f_s= 
        I_{a}(f_1^{m_1}\cdots f_{s-1}^{m_{s-1}}f_s^{m_s}).
    \]
    Furthermore, the ideal $I_a(f_1^{m_1}\cdots f_s^{m_s})$ has a Koszul
    stranded Betti table. 
\end{Proposition}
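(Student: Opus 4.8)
The plan is to reduce the strongly generic situation to the already-settled case of generic \emph{linear} forms by a flat base change, in the spirit of \Cref{lem:reg-case}; the twist is that here $f_1,\dots,f_s$ need \emph{not} be a regular sequence (since $s$ may exceed $n+1$), so instead of a map out of $\KK[z_1,\dots,z_s]$ we use one out of a polynomial ring in only $n+1$ variables.

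First I would record a linear-independence observation. Let $d$ be the common degree of the forms in $\calF$ and put $W=\KK f_1+\cdots+\KK f_s\subseteq R_d$, so $\dim_\KK W=n+1$. Any $n+1$ of the $f_i$ form an $R$-regular sequence, hence generate an ideal of height $n+1$; were they $\KK$-linearly dependent, that ideal would be generated by $n$ of them and have height at most $n$, a contradiction. Thus every $(n+1)$-subset of $\calF$ is a $\KK$-basis of $W$; in particular $f_1,\dots,f_{n+1}$ is both a basis of $W$ and an $R$-regular sequence, and $\braket{f_1,\dots,f_{n+1}}=\frakm_{\calF}$. Now introduce $S=\KK[y_0,\dots,y_n]$, graded by $\deg y_j=d$, and the ring map $\psi\colon S\to R$, $y_j\mapsto f_{j+1}$, which is flat by \Cref{key-facts}(a) (applied to the regular sequence $f_1,\dots,f_{n+1}$ with $T=S$). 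Since $\psi|_{S_1}\colon S_1\to W$ is a $\KK$-isomorphism, each $f_i$ equals $\psi(\lambda_i)$ for a unique linear form $\lambda_i\in S_1$, and the independence just observed transfers, so $\{\lambda_1,\dots,\lambda_s\}$ is a generic set of linear forms in $S$. As $\psi$ is a ring homomorphism it carries $I_a(\lambda_1^{m_1}\cdots\lambda_s^{m_s})$ to $I_a(f_1^{m_1}\cdots f_s^{m_s})$, carries $\braket{\lambda_{i_1},\dots,\lambda_{i_c}}^{\mu}$ to $\braket{f_{i_1},\dots,f_{i_c}}^{\mu}$, and sends $\lambda_s$ to $f_s$.

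Next I would invoke the linear case: by \cite[Theorem 2.3 and Corollary 2.4]{arXiv:1906.08346}, in $S$ the ideal $I_a(\lambda_1^{m_1}\cdots\lambda_s^{m_s})$ equals the intersection appearing in the statement (with the $\lambda$'s in place of the $f$'s), satisfies the colon identity $I_{a+1}(\lambda_1^{m_1}\cdots\lambda_{s-1}^{m_{s-1}}\lambda_s^{m_s+1})\colon\lambda_s=I_a(\lambda_1^{m_1}\cdots\lambda_{s-1}^{m_{s-1}}\lambda_s^{m_s})$, and has a linear resolution over $S$. Applying $\psi$ to the first two identities and using \Cref{key-facts}(b) (flatness commutes with finite intersections and with colon ideals) yields the decomposition formula and the colon formula over $R$. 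For the Betti-table claim, \Cref{key-facts}(a) — whose proof uses flatness, not monomiality — shows that tensoring a minimal graded free resolution $\bdF_\bullet$ of $S/I_a(\lambda_1^{m_1}\cdots\lambda_s^{m_s})$ with $R$ over $S$ gives a minimal graded free resolution of $R/I_a(f_1^{m_1}\cdots f_s^{m_s})$; because $\deg y_j=d$ for all $j$, a summand $S(-k)$ contributes $R(-kd)$, so a linear resolution over $S$, with $\beta_{i,j}(S/I_a(\lambda^{\bullet}))\neq 0$ only for $j=a+(i-1)$ and $i\ge 1$, becomes one with $\beta_{i,j}(R/I_a(f^{\bullet}))\neq 0$ only for $j=ad+d(i-1)$, $i\ge 1$ — precisely a Koszul stranded Betti table in the sense of \Cref{def:Koszul-table}, with $r=1$, single generating degree $ad$, and parameter $d$.

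The decomposition and colon parts are then purely formal. The one point that needs care is the grading bookkeeping in the last step: one must carry the non-standard grading $\deg y_j=d$ on $S$ through the base change so that linearity over $S$ becomes "$d$-linearity" over $R$, and check that minimality survives $-\otimes_S R$ — which it does, since the differentials of $\bdF_\bullet$ have entries in $\frakm_S$ and $\psi(\frakm_S)=\frakm_{\calF}\subseteq\frakm_R$.
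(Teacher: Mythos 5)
Your proposal is correct and follows essentially the same route as the paper: both reduce to the case of generic linear forms in a polynomial ring in $n+1$ variables (your $\lambda_i$ are exactly the paper's $\ell_i$, the unique preimages of the $f_i$ under the flat map $\psi$ determined by the regular sequence $f_1,\dots,f_{n+1}$), invoke \cite[Theorem 2.3 and Corollary 2.4]{arXiv:1906.08346} there, and transport the conclusions via \Cref{key-facts}. Your version is slightly more explicit about why every $(n+1)$-subset of $\calF$ is linearly independent and about the degree bookkeeping in the Betti table, but these are details the paper leaves implicit rather than a different argument.
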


\begin{proof}
    By virtue of \Cref{lem:reg-case}, it suffices to consider the case when $s\ge n+2$. Whence, under the assumptions on $\calF$, for each $j=n+2,\dots,s$, we will have $f_j=\sum_{i=1}^{n+1} \mu_{ji}f_i$ for some $\mu_{ji}\in \KK$. Now consider a new configuration $\calL=\{\ell_1,\dots,\ell_s\}$ in $S=\KK[y_1,\dots,y_{n+1}]$, where $\ell_1=y_1,\dots,\ell_{n+1}=y_{n+1}$, and $\ell_j=\sum_{i=1}^{n+1} \mu_{ji}y_i$ for $j=n+2,\dots,s$. Since $\calF$ is strongly generic, so is $\calL$.  Meanwhile, the desired properties hold if we replace $\calF$ by $\calL$, because of \cite[Theorem 2.3 and Corollary 2.4]{arXiv:1906.08346} and their proofs.

    To finish our proof, consider the homomorphism $\psi:S\to R$ induced by $\psi(y_i)=f_i$ for $1\le i\le n+1$. As $f_1,\dots,f_{n+1}\in R$ form a regular sequence, the map $\psi$ is flat. Notice that $I_a(f_1^{m_1}\cdots f_s^{m_s})=\psi(I_a(\ell_1^{m_1}\cdots \ell_s^{m_s}))$. Now, it remains to apply the facts stated in \Cref{key-facts}.
\end{proof}

Recall that the following formula was first conjectured by Geramita, Harbourne
and Migliore in \cite[Conjecture 4.1]{MR3003727} for any generic set $\calF$ of
linear forms. It was later established by Toh\v{a}neanu and Xie in
\cite[Theorem 3.2]{arXiv:1906.08346}.
\[
    I_{c,\calF}^m=I_{c,\calF}^{(m)} \cap I_{c+1,\calF}^{(2m)} \cap \cdots \cap I_{n,\calF}^{((n-c+1)m)}\cap \frakm^{(s-c+1)m}.
\]
Once we apply the flat argument in the proof of \Cref{thm:sg-1} to the above
formula, we immediately obtain the following result.        

\begin{Proposition}
    Under the \Cref{main-setting}, suppose in addition that
    $\calF=\{f_1,\dots,f_s\}$ is strongly generic.  Then the power of the
    star configuration ideal can be expressed as
    \[
        I_{c,\calF}^m=I_{c,\calF}^{(m)} \cap I_{c+1,\calF}^{(2m)} \cap \cdots \cap I_{n,\calF}^{((n-c+1)m)}\cap\frakm_{\calF}^{(s-c+1)m}.
    \]
\end{Proposition}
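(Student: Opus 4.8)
The plan is to deduce the identity from the known linear case by the same flat base change argument already used for \Cref{thm:sg-1}. First I would recall the setup: since $\calF=\{f_1,\dots,f_s\}$ is strongly generic with common degree $d$, the forms $f_1,\dots,f_{n+1}$ are an $R$-regular sequence, and for $j\ge n+2$ we have $f_j=\sum_{i=1}^{n+1}\mu_{ji}f_i$ with $\mu_{ji}\in\KK$. As in the proof of \Cref{thm:sg-1}, introduce the auxiliary ring $S=\KK[y_1,\dots,y_{n+1}]$ and the generic linear configuration $\calL=\{\ell_1,\dots,\ell_s\}$ with $\ell_i=y_i$ for $1\le i\le n+1$ and $\ell_j=\sum_{i=1}^{n+1}\mu_{ji}y_i$ for $j\ge n+2$, together with the flat homomorphism $\psi\colon S\to R$, $\psi(y_i)=f_i$.

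Next I would apply $\psi$ to the cited formula of Toh\v{a}neanu and Xie for the generic linear configuration $\calL$ in $S$, namely
\[
    I_{c,\calL}^m=I_{c,\calL}^{(m)}\cap I_{c+1,\calL}^{(2m)}\cap\cdots\cap I_{n,\calL}^{((n-c+1)m)}\cap\frakn^{(s-c+1)m},
\]
where $\frakn=\braket{y_1,\dots,y_{n+1}}$ is the graded maximal ideal of $S$ (which for $\calL$ coincides with $\braket{\ell_{i_1},\dots,\ell_{i_k}}$ for any $k\ge n+1$). Since $\psi$ is flat, \Cref{key-facts}(b) gives that $\psi$ commutes with finite intersections, and it also commutes with taking ordinary powers and — because flatness implies $\Ass$ behaves well and localization commutes with flat base change — with taking symbolic powers; concretely, $\psi(I_{c,\calL}^{(t)})=I_{c,\calF}^{(t)}$ and $\psi(\frakn^{(u)})=\psi(\frakn^u)=\frakm_{\calF}^{u}=\frakm_{\calF}^{(u)}$, using that $\frakm_{\calF}=\braket{f_1,\dots,f_{n+1}}$ is a complete intersection hence its symbolic and ordinary powers agree. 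Likewise $\psi(I_{c,\calL})=I_{c,\calF}$ by \Cref{key-facts}(b) applied to the intersection defining $I_{c,\calL}$, so $\psi(I_{c,\calL}^m)=I_{c,\calF}^m$. Applying $\psi$ to both sides of the displayed formula for $\calL$ therefore yields exactly the asserted identity for $\calF$.

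The one point that needs a little care — and which I expect to be the main (though mild) obstacle — is justifying that $\psi$ commutes with symbolic powers, since \Cref{key-facts} as stated only records compatibility with finite intersections, colons, and (implicitly) ordinary powers, not with the localization-and-contraction operation in the definition \eqref{eqn:symbolic-power}. This follows from standard facts: for a flat homomorphism $\psi\colon S\to R$ of Noetherian rings and an ideal $J\subseteq S$ with $JR$ having the property that $\Ass(R/JR)$ consists of the primes $\frakP$ lying over some $\frakp\in\Ass(S/J)$ (which holds here because the $f_i$ are a regular sequence, so $R$ is free — indeed faithfully flat — over the subring generated by them, and all ideals in question are extended), one gets $(JR)^{(m)}=J^{(m)}R$; alternatively, in our situation every $I_{c,\calF}$ and every power thereof is the extension of the corresponding ideal over the polynomial subring $\KK[f_1,\dots,f_{n+1}]\subseteq R$, over which $R$ is flat, reducing everything to the linear case where the symbolic powers are already understood via \eqref{eqn:symbolic-power-star-configuration}. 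Once this compatibility is in hand the proof is a one-line transport of structure, exactly parallel to \Cref{thm:sg-1}, so I would simply cite \Cref{thm:sg-1}'s proof technique and \cite[Theorem 3.2]{arXiv:1906.08346} and leave the routine verification to the reader.
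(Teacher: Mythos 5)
Your proposal is correct and follows essentially the same route as the paper: apply the flat homomorphism $\psi\colon S\to R$ from the proof of \Cref{thm:sg-1} to the Toh\v{a}neanu--Xie formula for the linear configuration $\calL$, using \Cref{key-facts} to commute $\psi$ with the finite intersection. The compatibility with symbolic powers that you flag as the delicate point is handled in the paper exactly by your ``alternative'' remark, namely that \eqref{eqn:symbolic-power-star-configuration} rewrites each $I_{c,\calL}^{(t)}$ as a finite intersection of ordinary powers of complete intersections, so no general flat-base-change statement for symbolic powers is needed.
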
 

\begin{proof}
    It remains to point out the formula in \eqref{eqn:symbolic-power-star-configuration} regarding the symbolic powers.
\end{proof}

The Koszul stranded Betti table part in \Cref{thm:sg-1} can be strengthened.  Notice that as a generalization of ideals with linear quotients,  Mantero introduced the notion of complete intersection quotients in \cite[Definition 6.1]{arXiv:1907.08172}.

\begin{Definition}
    Let $I$ be a homogeneous ideal in the polynomial ring $R=\KK[x_0,\dots,x_n]$. We say that $I$ has \emph{c.i.~quotients} if there exists a total order $h_1>\cdots>h_r$ on a generating set $\{h_1,\dots,h_r\}$ of $I$, such that for any $1\le i\le r-1$, the colon ideal $\braket{h_1,\dots,h_i}:h_{i+1}$ is a complete intersection ideal. Furthermore, if each such colon ideal has the same degree $d$, then we say that $I$ has \emph{$d$-c.i.~quotients}.
\end{Definition}

It is clear that $I$ has linear quotients precisely when $I$ has
$1$-c.i.~quotients. Furthermore, if $I$ has $d$-c.i.~quotients, then it has a
Koszul stranded Betti table by \cite[Corollary 6.6]{arXiv:1907.08172}.

The following result is crucial for our voyage of generalized star configurations.

\begin{Lemma}
    \label{prop:strongly-generic-ciq}
    Let $I$ be a monomial ideal having linear quotients in $T=\KK[z_1,\dots,z_s]$ and $\calF=\{f_1,\dots,f_s\}$ a strongly generic set of $d$-forms in $R=\KK[x_0,\dots,x_n]$. Regarding the homomorphism $\varphi:T\to R$ induced by $z_i\mapsto f_i$ for each $i$, the specialization $\varphi(I)$ has $d$-c.i.~quotients. 
\end{Lemma}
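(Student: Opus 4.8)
The plan is to factor $\varphi$ through a flat homomorphism so as to reduce to the case of linear forms, and then to settle the resulting colon computations by localization. First I would fix a monomial generating set $u_1>\dots>u_r$ of $I$ witnessing the linear quotients, so that $\braket{u_1,\dots,u_{i-1}}\colon u_i=\braket{z_k\colon k\in A_i}$ for suitable subsets $A_i\subseteq[s]$ and every $i\ge 2$. Since $\calF$ is strongly generic, after relabelling $f_1,\dots,f_{n+1}$ is a $\KK$-basis of $\KK f_1+\dots+\KK f_s$, so $f_j=\sum_{i=1}^{n+1}\mu_{ji}f_i$ for $j\ge n+2$ with all $\mu_{ji}\ne 0$ by genericity. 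I set $S=\KK[y_1,\dots,y_{n+1}]$, $\ell_i=y_i$ for $i\le n+1$ and $\ell_j=\sum_{i=1}^{n+1}\mu_{ji}y_i$ for $j\ge n+2$, let $\pi\colon T\to S$ be induced by $z_i\mapsto\ell_i$ and $\psi\colon S\to R$ by $y_i\mapsto f_i$, so that $\varphi=\psi\circ\pi$ and $\calL=\{\ell_1,\dots,\ell_s\}$ is again strongly generic. Because $f_1,\dots,f_{n+1}$ is an $R$-regular sequence, $\psi$ is flat — in fact faithfully flat — and it multiplies degrees by $d$.

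Next I would reduce to the case $d=1$. One records the elementary facts that any ideal of $S$ generated by linear forms is already generated by a regular sequence of at most $n+1$ linear forms (take a basis of its span), and that a faithfully flat homomorphism carries regular sequences to regular sequences; together they show that $\psi$ sends every linear-form ideal of $S$ to a complete intersection of $R$ generated in degree $d$ (in particular $\psi(\frakm_S)=\frakm_{\calF}$). Applying \Cref{key-facts}(b) to the flat map $\psi$, which commutes with colons, and using faithful flatness so that a nonredundant generator of $\pi(I)$ remains nonredundant after applying $\psi$, one obtains: if $\pi(I)$ has linear quotients in $S$, then $\varphi(I)=\psi(\pi(I))$ has $d$-c.i.\ quotients in $R$. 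So it remains to show that $\pi(I)$ has linear quotients.

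For that I would order the generators of $\pi(I)$ as $\pi(u_1),\dots,\pi(u_r)$, discarding each $\pi(u_i)$ that lies in the ideal generated by its predecessors, and claim that for every surviving $u_i$ the colon ideal $\braket{\pi(u_1),\dots,\pi(u_{i-1})}\colon\pi(u_i)$ equals $\braket{\ell_k\colon k\in A_i}$ if $|A_i|\le n$ and equals $\frakm_S$ if $|A_i|\ge n+1$; in either case it is generated by linear forms, which gives the linear quotients. The inclusion ``$\supseteq$'' is immediate from $z_ku_i\in\braket{u_1,\dots,u_{i-1}}$ for $k\in A_i$, together with the fact that any $n+1$ of the $\ell$'s span $S_1$. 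For ``$\subseteq$'' I would localize at the prime $L\coloneqq\braket{\ell_k\colon k\in A_i}$. When $|A_i|\ge n+1$ we have $L=\frakm_S$, so nothing is needed beyond $\pi(u_i)\notin\braket{\pi(u_1),\dots,\pi(u_{i-1})}$, which holds because $u_i$ survived. When $|A_i|\le n$, each $\ell_k$ with $k\notin A_i$ is, by genericity, linearly independent from $\{\ell_j\colon j\in A_i\}$, hence a unit in $S_L$; thus, up to units, $\pi$ sends a monomial $w$ to $\prod_{j\in A_i}\ell_j^{w_j}$ in $S_L$, and a putative $g$ in the colon with $g\notin L$ would become a unit in $S_L$ and force $\prod_{j\in A_i}\ell_j^{(u_i)_j}$ to be divisible, in the polynomial ring on the independent forms $\{\ell_j\}_{j\in A_i}$, by $\prod_{j\in A_i}\ell_j^{(u_l)_j}$ for some $l<i$. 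Writing $v\coloneqq\prod_{j\notin A_i}z_j^{(u_l)_j}$ one then checks $u_l\mid u_iv$ in $T$, whence $v\in\braket{u_1,\dots,u_{i-1}}\colon u_i=\braket{z_k\colon k\in A_i}$, contradicting that $v$ is a monomial supported off $A_i$. With the claim established, $\pi(I)$ has linear quotients and the lemma follows.

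The hard part will be exactly this last step: making the colon inclusion ``$\subseteq$'' sharp and, inseparably, controlling the specialized generators that become redundant — note that $\pi(I)$ may genuinely require strictly fewer generators than $I$, so some discarding is unavoidable. The localization trick is meant to turn ``$\subseteq$'' into ordinary monomial divisibility, and the remaining care is to verify that discarding a generator (which by construction lies in the ideal generated by its predecessors, an ideal that is thereby unchanged) does not disturb the colon ideals of the surviving generators.
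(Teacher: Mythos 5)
Your proof is correct, but it takes a genuinely different route from the paper's in the main case. The paper works directly in $R$: it fixes the linear-quotient order on the monomial generators of $I$ and, for a colon ideal $\braket{z_{j_1},\dots,z_{j_t}}$ with $t\le n$, invokes two technical results of Mantero (\cite[Lemma 6.8 and Proposition 6.11]{arXiv:1907.08172}) to reduce $\braket{\varphi(u_1),\dots,\varphi(u_{i-1})}:\varphi(u_i)$ to the individual colons $\varphi(u_k):\varphi(u_i)$ and to trap each of these inside a single $\braket{f_{j}}$ using only $n$-genericity; the flat factorization through $S=\KK[y_1,\dots,y_{n+1}]$ is used there only for the case $t\ge n+1$, where the colon is $\frakm_{\calF}$. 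You instead push the factorization $\varphi=\psi\circ\pi$ through the whole argument, reduce once and for all to linear forms in $S$, and then settle the inclusion $\subseteq$ for $|A_i|\le n$ by localizing at the prime $L=\braket{\ell_k:k\in A_i}$: the $\ell_k$ with $k\notin A_i$ become units, monomials in the regular system of parameters $\{\ell_j\}_{j\in A_i}$ obey ordinary divisibility, and a failure of the inclusion transports back to a monomial $v$ supported off $A_i$ lying in $\braket{u_1,\dots,u_{i-1}}:u_i=\braket{z_k:k\in A_i}$, a contradiction; this step is sound (the colon is graded, any element outside the prime $L$ is a unit in $S_L$, and membership of a monomial in a monomial ideal localized at a monomial prime reduces to divisibility). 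Your handling of specialized generators that become redundant---discard them, noting the ideal of predecessors and hence the surviving colons are unchanged---matches what the paper does implicitly. The trade-off: your argument is self-contained, needing no external colon lemmas, but it is wedded to strong genericity from the first line, whereas the paper's case-$(t\le n)$ argument uses only $c$-genericity and therefore also yields the variant \Cref{prop:c-generic-ciq}, which a factorization through an $(n+1)$-variable ring cannot produce directly.
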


\begin{proof}
    Since $I$ is monomial and has linear quotients, there is a total order $\bdz^{\bdbeta_1}>\cdots > \bdz^{\bdbeta_r}$ on the minimal monomial generating set $\{\bdz^{\bdbeta_1},\dots,\bdz^{\bdbeta_r}\}$ of $I$ so that each successive colon ideal $\braket{\bdz^{\bdbeta_1},\dots,\bdz^{\bdbeta_i}}:\bdz^{\bdbeta_{i+1}}$ is generated by a subset of the variables $\{z_1,\dots,z_s\}$.

    Now, $\varphi(I)=\braket{\varphi(\bdz^{\beta_1}),\dots,\varphi(\bdz^{\bdbeta_r})}$.  For each $i\le r-1$, if $\varphi(\bdz^{\bdbeta_{i+1}})\notin \varphi(\braket{\bdz^{\bdbeta_1},\dots,\bdz^{\bdbeta_i}})$, we claim that
    \[
        \varphi(\braket{\bdz^{\bdbeta_1},\dots,\bdz^{\bdbeta_i}}):\varphi(\bdz^{\bdbeta_{i+1}})=
        \varphi(\braket{\bdz^{\bdbeta_1},\dots,\bdz^{\bdbeta_i}}:\bdz^{\bdbeta_{i+1}}).
    \]
    Of course, if $\varphi(\bdz^{\bdbeta_{i+1}})\in \varphi(\braket{\bdz^{\bdbeta_1},\dots,\bdz^{\bdbeta_i}})$, then we can simply remove $\varphi(\bdz^{\bdbeta_{i+1}})$ from the generating set of $\varphi(I)$.

    For simplicity, for this fixed $i$, suppose that the quotient ideal $\braket{\bdz^{\bdbeta_1},\dots,\bdz^{\bdbeta_i}}:\bdz^{\bdbeta_{i+1}}$ is generated by $z_{j_1},z_{j_2},\dots,z_{j_t}$ with $1\le j_1<j_2<\cdots<j_t\le s$. Hence, the aforementioned claim asks for
    \begin{equation*}
        \braket{\varphi(\bdz^{\bdbeta_1}),\dots,\varphi(\bdz^{\bdbeta_i})}:\varphi(\bdz^{\bdbeta_{i+1}})=
        \varphi(\braket{z_{j_1},\dots,z_{j_t}}).
    \end{equation*}
    Regarding the latest equality, as the containment $\text{LHS}\supseteq\text{RHS}$
    is clear, it remains to prove the reverse containment
    \begin{equation}
        \braket{\varphi(\bdz^{\bdbeta_1}),\dots,\varphi(\bdz^{\bdbeta_i})}:\varphi(\bdz^{\bdbeta_{i+1}})\subseteq 
        \varphi(\braket{z_{j_1},\dots,z_{j_t}}).
        \label{eqn:colon1}
    \end{equation}
    \begin{enumerate}[a]
        \item When $t\ge n+1$, then the RHS of \eqref{eqn:colon1} is simply $\frakm_{\calF}$. We will use the notations in the proof of \Cref{thm:sg-1}. Whence, there exists forms $g_1,\dots,g_r\in S=\KK[y_1,\dots,y_{n+1}]$ such that $\varphi(\bdz^{\bdbeta_j})=\psi(g_j)$ for each $j$. As $\varphi(\bdz^{\bdbeta_{i+1}})\notin \varphi(\braket{\bdz^{\bdbeta_1},\dots,\bdz^{\bdbeta_i}})$, $g_{i+1}\notin \braket{g_1,\dots,g_i}$. Therefore, $\braket{g_1,\dots,g_i}:g_{i+1}\subseteq \braket{y_1,\dots,y_{n+1}}$, the graded maximal ideal of $S$. Since $\psi$ is flat, this implies that 
            \[
                \psi(\braket{g_1,\dots,g_i}):\psi(g_{i+1})=
                \psi(\braket{g_1,\dots,g_i}:g_{i+1})\subseteq \psi(\braket{y_1,\dots,y_{n+1}})=\frakm_{\calF}.
            \]
            Hence the containment \eqref{eqn:colon1} holds in this case.
        \item When $t\le n$, to prove \eqref{eqn:colon1}, it suffices to prove
            \begin{equation}
                \varphi(\bdz^{\bdbeta_k}):\varphi(\bdz^{\bdbeta_{i+1}})\subseteq
                \braket{f_{j_1},\dots,f_{j_t}}
                \label{eqn:individual-contaiment} 
            \end{equation} 
            for $k=1,\dots,i$, by applying \cite[Proposition 6.11]{arXiv:1907.08172} with $c=n$.  Notice that as $\bdz^{\bdbeta_k}:\bdz^{\bdbeta_{i+1}}\subseteq \braket{z_{j_1},\dots,z_{j_t}}$, 
            \[
                \supp(\bdz^{\bdbeta_k}/\gcd(\bdz^{\bdbeta_k},\bdz^{\bdbeta_{i+1}}))\cap \{j_1,\dots,j_t\}\ne \varnothing.
            \]
            Without loss of generality, say $j_1$ belongs to this intersection. Then,
            \begin{align*}
                \varphi(\bdz^{\bdbeta_k}):\varphi(\bdz^{\bdbeta_{i+1}})&= \varphi(\bdz^{\bdbeta_k}/\gcd(\bdz^{\bdbeta_k},\bdz^{\bdbeta_{i+1}})):\varphi(\bdz^{\bdbeta_{i+1}}/\gcd(\bdz^{\bdbeta_k},\bdz^{\bdbeta_{i+1}}))\\
                &\subseteq  \braket{f_{j_1}} \subseteq \braket{f_{j_1},\dots,f_{j_t}},
            \end{align*}
            where the first containment is due to \cite[Lemma 6.8]{arXiv:1907.08172}. Since this confirms the containment in \eqref{eqn:individual-contaiment}, our proof is finished.
            \qedhere
    \end{enumerate}
\end{proof}

It is clear from the above proof that the additional strongly generic assumption is only needed when dealing with the maximal part $\frakm_{\calF}$. Thus, we will have a variant result under the mild generic condition.

\begin{Lemma}
    \label{prop:c-generic-ciq}
    Let $I$ be a monomial ideal having linear quotients in $T=\KK[z_1,\dots,z_s]$ and $\calF=\{f_1,\dots,f_s\}$ a set of $c$-generic forms in $R=\KK[x_0,\dots,x_n]$.  Suppose that $\projdim_T(I)\le c<\dim(R)=n+1$.  Regarding the homomorphism $\varphi:T\to R$ induced by $z_i\mapsto f_i$ for each $i$, the specialization $\varphi(I)$ has c.i.~quotients. If in addition all forms in $\calF$ are of degree $d$, then $\varphi(I)$ has $d$-c.i.~quotients.
\end{Lemma}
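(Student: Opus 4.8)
The plan is to run the proof of \Cref{prop:strongly-generic-ciq} essentially verbatim, the one new ingredient being that the hypothesis $\projdim_T(I)\le c$ rules out the appearance of the maximal ideal $\frakm_\calF$ — which is the sole place in that proof where strong genericity was used.

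First I would fix, exactly as there, a total order $\bdz^{\bdbeta_1}>\cdots>\bdz^{\bdbeta_r}$ on the minimal monomial generating set of $I$ realizing its linear quotients, so that each successive colon ideal $J_i\coloneqq\braket{\bdz^{\bdbeta_1},\dots,\bdz^{\bdbeta_i}}:\bdz^{\bdbeta_{i+1}}$ is generated by a subset $\{z_{j_1},\dots,z_{j_t}\}$ of the variables (with $t=t(i)\ge 1$, since $\bdz^{\bdbeta_{i+1}}$ is a minimal generator). The crucial translation step is to convert the projective dimension bound into a bound on $t(i)$: for a monomial ideal with linear quotients whose successive colon ideals are generated by subsets of the variables, the iterated mapping-cone resolution of $T/I$ built from such an order is minimal, so each $J_i$ contributes a non-cancelling summand in homological degree $t(i)$, whence $t(i)\le\projdim_T(I)\le c\le n$ for all $i$ (using $c<n+1$).

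With this established, I would reuse the argument of \Cref{prop:strongly-generic-ciq}: since $t(i)\le n$ throughout, only ``case (b)'' of that proof ever occurs, so the auxiliary ring $S=\KK[y_1,\dots,y_{n+1}]$ and the ideal $\frakm_\calF$ of ``case (a)'' — the strong-genericity parts — are never invoked. Concretely, for each $i$ I either discard $\varphi(\bdz^{\bdbeta_{i+1}})$ from the generating set of $\varphi(I)$ when it lies in $\varphi(\braket{\bdz^{\bdbeta_1},\dots,\bdz^{\bdbeta_i}})$, or else I reduce (via \cite[Lemma 6.8]{arXiv:1907.08172}, using that $\supp(\bdz^{\bdbeta_k}/\gcd(\bdz^{\bdbeta_k},\bdz^{\bdbeta_{i+1}}))$ meets $\{j_1,\dots,j_t\}$) to the individual containments $\varphi(\bdz^{\bdbeta_k}):\varphi(\bdz^{\bdbeta_{i+1}})\subseteq\braket{f_{j_1},\dots,f_{j_t}}$ for $k\le i$, and then assemble them using \cite[Proposition 6.11]{arXiv:1907.08172}, now applied with its integer parameter equal to $c$ rather than $n$; this is legitimate precisely because $t(i)\le c$ and $\calF$ is $c$-generic. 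The conclusion is $\braket{\varphi(\bdz^{\bdbeta_1}),\dots,\varphi(\bdz^{\bdbeta_i})}:\varphi(\bdz^{\bdbeta_{i+1}})=\varphi(J_i)=\braket{f_{j_1},\dots,f_{j_t}}$, which is a complete intersection since $f_{j_1},\dots,f_{j_t}$ is a regular sequence ($t\le c$, $\calF$ $c$-generic). Thus the surviving generators of $\varphi(I)$, in the induced order, exhibit c.i.~quotients; and if all $f_i$ have degree $d$, each of these colon ideals is generated by a regular sequence of $d$-forms, giving $d$-c.i.~quotients.

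The only genuinely new point requiring care is the second paragraph — the bound $t(i)\le\projdim_T(I)$ coming from the minimality of the mapping-cone resolution of a linear-quotient monomial ideal with variable-generated colons — together with the sanity check that \cite[Proposition 6.11]{arXiv:1907.08172} may be applied with the genericity parameter $c$ in place of the ambient $n$ (which needs exactly $t(i)\le c$ and $c$-genericity, both in hand). Everything else is a transcription of the proof of \Cref{prop:strongly-generic-ciq} with the maximal-ideal branch excised, so I do not expect any real obstacle.
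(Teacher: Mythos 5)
Your proposal is correct and follows essentially the same route as the paper: the paper's proof likewise rests on the identity $\projdim_T(I)=\max_i\mu\bigl(\braket{\bdz^{\bdbeta_1},\dots,\bdz^{\bdbeta_i}}:\bdz^{\bdbeta_{i+1}}\bigr)$ for monomial ideals with linear quotients, which forces every successive colon ideal to be generated by at most $c\le n$ variables, so that only the ``$t\le c$'' branch of the proof of \Cref{prop:strongly-generic-ciq} is ever needed and Lemma 6.8 and Proposition 6.11 of \cite{arXiv:1907.08172} apply with parameter $c$. Your extra justification of that identity via the mapping-cone resolution is exactly the standard argument the paper takes for granted.
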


\begin{proof}
    It suffices to point out that 
    \begin{equation}
        \projdim_T(I)=\max\Set{\mu(\braket{\bdz^{\bdbeta_1},\dots,\bdz^{\bdbeta_i}}:\bdz^{\bdbeta_{i+1}}):1\le i\le r-1}
        \label{eqn:projdim-quotients}
    \end{equation}
    where $\mu$ denotes the minimal number of generators. Now, the remaining argument will be similar. We will only encounter the $t\le c\le n$ case, where the Lemma 6.8 and Proposition 6.11 of \cite{arXiv:1907.08172} still apply. 
\end{proof}

With the above preparation, we are ready to state the main result of this section.

\begin{Theorem}
    \label{thm:strongly-generic-ciq}
    Let $R=\KK[x_0,\dots,x_n]$ and $T=\KK[z_1,\dots,z_s]$ be two polynomial rings over the field $\KK$.
    Let $\calF=\{f_1,\dots,f_s\}$ be a set of forms in $R$ and $a,m_1,\dots,m_s$ be positive integers with $a\le m_1+\cdots+m_s$.  
    \begin{enumerate}[a]
        \item \label{thm:strongly-generic-ciq-a}
        If $\calF$ is a strongly generic set of $d$-forms, then the $a$-fold product ideal $I_{a}(f_1^{m_1}\cdots f_s^{m_s})$ has $d$-c.i.~quotients.
        \item If $\projdim_T(I_a(z_1^{m_1}\cdots z_s^{m_s}))\le c\le n$ and $\calF$ is $c$-generic, then the $a$-fold product ideal $I_{a}(f_1^{m_1}\cdots f_s^{m_s})$ has c.i.~quotients. If in addition all forms in $\calF$ are of degree $d$, then $I_{a}(f_1^{m_1}\cdots f_s^{m_s})$ has $d$-c.i.~quotients.  
    \end{enumerate}
\end{Theorem}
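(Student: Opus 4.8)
The plan is to derive the theorem from the two specialization lemmas \Cref{prop:strongly-generic-ciq} and \Cref{prop:c-generic-ciq}, the only genuine input being that the monomial prototype of an $a$-fold product ideal is polymatroidal. First I would set $T=\KK[z_1,\dots,z_s]$ and let $\varphi\colon T\to R$ be the ring map with $\varphi(z_i)=f_i$, and record the tautological identity
\[
    \varphi\bigl(I_a(z_1^{m_1}\cdots z_s^{m_s})\bigr)=I_a(f_1^{m_1}\cdots f_s^{m_s}),
\]
which holds because $\varphi$ sends $z_1^{n_1}\cdots z_s^{n_s}$ to $f_1^{n_1}\cdots f_s^{n_s}$ and hence carries the distinguished generating set of the left-hand ideal onto that of the right-hand one. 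Thus it suffices to understand the monomial ideal $J\coloneqq I_a(z_1^{m_1}\cdots z_s^{m_s})$ in $T$.

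The step with real content is that $J$ has linear quotients, and I would establish this by checking that $J$ is \emph{polymatroidal}. Its minimal monomial generators are exactly the monomials $z^{\bdalpha}$ with $\bdalpha$ running over the lattice points of the polytope $\{x\in\RR_{\ge 0}^{s}: x_i\le m_i \text{ for all } i,\ \sum_i x_i=a\}$, equivalently over the bases of the discrete polymatroid on $[s]$ whose integer-valued, monotone, submodular rank function is $S\mapsto\min\{a,\sum_{i\in S}m_i\}$. The polymatroidal exchange property is immediate: given two such exponent vectors $\bdalpha,\bdbeta$ and an index $k$ with $\alpha_k>\beta_k$, since $\sum_i\alpha_i=\sum_i\beta_i=a$ there is an index $l$ with $\alpha_l<\beta_l$, and then $\bdalpha-\bde_k+\bde_l$ still has every coordinate in the allowed range and total degree $a$, so $z_l\,z^{\bdalpha}/z_k\in J$. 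Hence $J$ is polymatroidal, and so has linear quotients by the classical fact that polymatroidal ideals have linear quotients; concretely, ordering the minimal generators by the reverse-lexicographic order of their exponent vectors, the exchange property above shows that each successive colon ideal $\langle z^{\bdalpha^{(1)}},\dots,z^{\bdalpha^{(i)}}\rangle\colon z^{\bdalpha^{(i+1)}}$ is generated by a subset of $\{z_1,\dots,z_s\}$, which is precisely the hypothesis the specialization lemmas require of a monomial ideal.

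Everything else is bookkeeping. For part (a): $J$ is a monomial ideal with linear quotients and $\calF$ is a strongly generic set of $d$-forms, so \Cref{prop:strongly-generic-ciq} applies and yields that $\varphi(J)=I_a(f_1^{m_1}\cdots f_s^{m_s})$ has $d$-c.i.~quotients. For part (b): the standing hypotheses $\projdim_T(J)\le c\le n<\dim(R)$ together with "$\calF$ is $c$-generic" are exactly the assumptions of \Cref{prop:c-generic-ciq}, so $\varphi(J)=I_a(f_1^{m_1}\cdots f_s^{m_s})$ has c.i.~quotients, and $d$-c.i.~quotients when in addition all forms of $\calF$ have degree $d$. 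I do not expect any obstacle beyond the linear-quotients property of $J$: the flatness of $\varphi$, and of the auxiliary map $\psi$ handling the maximal component $\frakm_{\calF}$ in the strongly generic case, has already been absorbed into \Cref{prop:strongly-generic-ciq} and \Cref{prop:c-generic-ciq}, so the only remaining task is to see that the hypotheses line up — which they were engineered to do.
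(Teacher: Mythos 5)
Your proposal is correct and follows essentially the same route as the paper: identify the monomial prototype $I_a(z_1^{m_1}\cdots z_s^{m_s})$ as a polymatroidal (Veronese-type) ideal, hence one with linear quotients, and then invoke the two specialization lemmas. The only difference is that you verify the exchange property by hand where the paper cites Herzog--Hibi, and your verification is accurate.
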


\begin{proof}
    The monomial ideal $I_a(z_1^{m_1}\cdots z_s^{m_s})$ in $T$ is known as the ideal of Veronese type. In particular, it is a polymatroidal ideal and has linear quotients; see \cite[Example 12.2.8 and Theorem 12.6.2]{MR2724673}. Now, we can apply \Cref{prop:strongly-generic-ciq} and \Cref{prop:c-generic-ciq}. 
\end{proof}

\begin{Corollary}
    Let $\Sigma:=(\underbrace{f_1,\dots,f_1}_{m_1},\dots,\underbrace{f_s,\dots,f_s}_{m_s})$ be a collection of linear forms in $\KK[x_0,\ldots,x_n]$, with $s, m_1,\dots,m_s\ge 1$, and with $\Supp(\Sigma)=\{f_1,\dots,f_s\}$ being generic. Then for any $1\leq a\le m_1+\cdots+m_s$, the ideal $I_a(f_1^{m_1}f_2^{m_2}\cdots f_s^{m_s})$ has linear quotients.
\end{Corollary}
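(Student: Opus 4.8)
The plan is to obtain this as an immediate corollary of \Cref{thm:strongly-generic-ciq}, part (a). First I would match up the data: a collection $\Sigma$ of the stated shape, together with the hypothesis that $\Supp(\Sigma)=\{f_1,\dots,f_s\}$ is generic, is precisely an instance of \Cref{main-setting} with $\calF=\Supp(\Sigma)$, multiplicities $m_1,\dots,m_s$, fold-parameter $a$ satisfying $1\le a\le m_1+\cdots+m_s$, and the extra feature that all the $f_i$ share the common degree $d=1$. The one hypothesis of \Cref{thm:strongly-generic-ciq}(a) that is not literally given is that $\calF$ be \emph{strongly generic}; but by the remark recorded immediately after the definition of strongly generic (the case $d=1$), for linear forms ``generic'' and ``strongly generic'' are equivalent, so this holds automatically.

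Having checked the hypotheses, I would simply invoke \Cref{thm:strongly-generic-ciq}(a) to conclude that $I_a(f_1^{m_1}\cdots f_s^{m_s})$ has $d$-c.i.~quotients with $d=1$, i.e.\ $1$-c.i.~quotients. The last step is to recall the observation noted just after the definition of c.i.~quotients, that having $1$-c.i.~quotients is the same as having linear quotients; this yields exactly the assertion of the corollary.

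I do not expect any genuine obstacle here: this is a direct specialization, and all the substantive work has already been carried out in proving \Cref{thm:strongly-generic-ciq} — which itself rests on \Cref{prop:strongly-generic-ciq} together with the fact that the Veronese-type monomial ideal $I_a(z_1^{m_1}\cdots z_s^{m_s})$ in $T=\KK[z_1,\dots,z_s]$ is polymatroidal and hence has linear quotients. The only point demanding a little care is the bookkeeping observation that the degree-one case falls under part (a) (via the strongly generic equivalence) rather than needing the projective-dimension side condition of part (b); since it does, part (a) applies unconditionally and the proof is complete.
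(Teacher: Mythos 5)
Your proposal is correct and matches the paper's own proof exactly: both reduce to \Cref{thm:strongly-generic-ciq}\ref{thm:strongly-generic-ciq-a} via the observation that for $d=1$ the generic and strongly generic conditions coincide, and then identify $1$-c.i.~quotients with linear quotients. No gaps.
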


\begin{proof}
When we are dealing with linear forms, the set $\Supp(\Sigma)$ being strongly generic is equivalent to being generic. Now, we apply \Cref{thm:strongly-generic-ciq} \ref{thm:strongly-generic-ciq-a}.
\end{proof}

Since equi-generated ideals having linear quotients will have linear
resolutions, we recover the linear resolution result in \cite[Theorem
2.3]{arXiv:1906.08346} by the above corollary.

\begin{Corollary}
    Under the assumptions in \Cref{thm:strongly-generic-ciq}, 
    suppose in addition that all forms in $\calF$ are of degree $d$.
    Then the Castelnuovo--Mumford regularity
    \[
        \reg_R(R/I_{a}(f_1^{m_1}\cdots f_s^{m_s})) = d(a+p-1)-p 
    \]
    for $p=\projdim_R(R/I_{a}(f_1^{m_1}\cdots f_s^{m_s}))$.
\end{Corollary}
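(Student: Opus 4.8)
The plan is to read the regularity straight off the shape of the minimal free resolution, exploiting the Koszul stranded structure already established. The first step is the elementary observation that $I \coloneqq I_a(f_1^{m_1}\cdots f_s^{m_s})$ is equi-generated: each generator $f_1^{n_1}\cdots f_s^{n_s}$ with $\sum_i n_i = a$ is a form of degree $da$ because every $f_i$ has degree $d$, so $I$ is generated in the single degree $da$. Under the hypotheses of the corollary all forms of $\calF$ have degree $d$, so \Cref{thm:strongly-generic-ciq} applies in either case and yields that $I$ has $d$-c.i.~quotients; hence $I$ has a Koszul stranded Betti table by \cite[Corollary 6.6]{arXiv:1907.08172}. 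Since $I$ is generated only in degree $da$, the index set in \Cref{def:Koszul-table} is a singleton and the constraint specializes to
\[
    \beta_{i,j}(R/I) \ne 0 \ \Longrightarrow\ j = da + d(i-1) = d(a+i-1), \qquad i \ge 1 .
\]

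The second step is to feed this into $\reg_R(R/I) = \max\{\, j-i : \beta_{i,j}(R/I)\ne 0\,\}$. Along the strand $i \ge 1$ one has $j - i = d(a+i-1) - i = d(a-1) + (d-1)i$, which is non-decreasing in $i$ because $d \ge 1$; moreover the largest $i$ with a nonzero Betti number is precisely $i = p = \projdim_R(R/I)$, for which the only admissible internal degree is $j = d(a+p-1)$. Hence the supremum over $i \ge 1$ is $d(a+p-1) - p$, while the only other contribution, $i = 0$, yields $j - i = 0 \le d(a+p-1) - p$ (again using $d \ge 1$, $a \ge 1$, $p \ge 1$). Collecting the two cases gives the claimed equality.

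There is no real obstacle beyond invoking \Cref{thm:strongly-generic-ciq} and the machinery of \cite{arXiv:1907.08172}; the one point that deserves a word of care is that the regularity is attained at the very top of the resolution (homological degree $p$), which is exactly what makes the closed formula clean. When $d = 1$ the quantity $j - i$ is constant along the strand — reflecting the linear resolution case — but $d(a+p-1) - p = a-1$ is still the correct value, so no case distinction is needed. It is also worth recording that $I$ is a proper nonzero homogeneous ideal generated in positive degree (it has at least one generator since $1 \le a \le m_1 + \cdots + m_s$), so $R/I$ is not free, $p \ge 1$, and all the quantities above are meaningful.
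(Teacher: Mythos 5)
Your proof is correct and follows essentially the same route as the paper: the paper's own argument is just the terse observation that $I$ is equi-generated in degree $ad$ together with the Koszul stranded Betti table from \Cref{thm:strongly-generic-ciq}, so that the largest degree shift occurs at the top homological degree. You have merely written out the monotonicity computation $j-i=d(a-1)+(d-1)i$ that the paper leaves implicit.
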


\begin{proof}
    Note that the ideal $I_a(f_1^{m_1}\cdots f_s^{m_s})$ is equi-generated in degree $ad$. Thus, the largest degree shift of the graded minimal free resolution happens at the top homological degree.
\end{proof}

It is noted in \cite[Remark 6.4 and Corollary 6.6]{arXiv:1907.08172} that any
ideal with $d$-c.i.~quotients has a Koszul stranded Betti table, which is
completely determined by the sizes of minimal generating sets of the successive
colon ideals.  To be more precise, suppose that an ideal $I\subset T=\KK[z_1,\dots,z_s]$ has
$d$-c.i.~quotients with respect to a total order on a generating set
$G(I)=\{g_1>\dots>g_h\}$. For each $1\le i\le h$, let $r_k$ be the
minimal number of generators of $\braket{g_1,\dots,g_{i-1}}:\braket{g_i}$. Now,
$T/I$ has a minimal graded $T$-free resolution $\bdF_\bullet=\bigoplus_i F_i$
with
\[
    F_i=\bigoplus_{k=1}^h T(-(\deg(g_k)+d(i-1)))^{\binom{r_k}{i-1}}
\]
for $i\ge 1$.  

Back to the specialization discussions in \Cref{prop:strongly-generic-ciq} and
\Cref{prop:c-generic-ciq}.  As the sizes of successive colon ideals won't get
larger after specialization by the affiliated proofs, we can compare the Betti numbers of the ideals with ease.

\begin{Proposition}
    Under the assumptions in \Cref{prop:strongly-generic-ciq} or \Cref{prop:c-generic-ciq}, we have
    \[
        \beta_{i,dj}^{R}(R/\varphi(I))\le \beta_{i,j}^{T}(T/I)
    \]
    for any $i\ge 1$.  In particular, for the $a$-fold product ideal $I_a(f_1^{m_1}\cdots f_s^{m_s})=\varphi(I_a(z_1^{m_1}\cdots z_s^{m_s}))$ in $R$, if all forms in $\calF=\{f_1,\dots,f_s\}$ are of degree $d$, then we have
    \begin{equation}
        \beta_{i,d(a+i-1)}^R(R/I_{a}(f_1^{m_1}\cdots f_s^{m_s})) \le
        \beta_{i,a+i-1}^T(T/I_{a}(z_1^{m_1}\cdots z_s^{m_s})) 
        \label{eqn:compare-betti-fold-product}
    \end{equation}
    and
    \[
        \beta_{i,j}^R(R/I_{a}(f_1^{m_1}\cdots f_s^{m_s}))=0 \qquad\text{for $j\ne d(a+i-1)$}
    \]
    for each $i\ge 1$.
\end{Proposition}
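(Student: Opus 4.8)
The plan is to read both graded Betti tables off the combinatorial data attached to their c.i.~quotients, and then compare them generator by generator. Recall from \cite[Remark 6.4 and Corollary 6.6]{arXiv:1907.08172} (and the formula recalled just above) that if a homogeneous ideal $J$ in a polynomial ring has $e$-c.i.~quotients with respect to a total order $h_1>\cdots>h_m$ on a minimal generating set, then for $i\ge 1$ the generator $h_k$ contributes precisely $\binom{s_k}{i-1}$ to $\beta_{i,\,\deg(h_k)+e(i-1)}(\cdot/J)$, where $s_k\coloneqq\mu(\langle h_1,\dots,h_{k-1}\rangle:h_k)$; in particular all nonzero Betti numbers of $J$ lie in internal degrees of the shape $\deg(h_k)+e(i-1)$.

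First I would fix the notation of the proof of \Cref{prop:strongly-generic-ciq}: let $\{\bdz^{\bdbeta_1}>\cdots>\bdz^{\bdbeta_r}\}$ be the minimal monomial generating set realizing the linear quotients of $I$, so that $\langle\bdz^{\bdbeta_1},\dots,\bdz^{\bdbeta_{k-1}}\rangle:\bdz^{\bdbeta_k}$ is generated by a set of, say, $r_k$ of the variables; thus $\bdz^{\bdbeta_k}$ contributes $\binom{r_k}{i-1}$ to $\beta^T_{i,\,\deg_T(\bdz^{\bdbeta_k})+(i-1)}(T/I)$. Passing to $R$, the proofs of \Cref{prop:strongly-generic-ciq} and \Cref{prop:c-generic-ciq} show that, after discarding those $\varphi(\bdz^{\bdbeta_k})$ that already lie in the ideal generated by the previous images, the surviving images in the induced order exhibit $\varphi(I)$ as having $d$-c.i.~quotients, and that for each surviving index $k$ the colon ideal $\langle\varphi(\bdz^{\bdbeta_1}),\dots,\varphi(\bdz^{\bdbeta_{k-1}})\rangle:\varphi(\bdz^{\bdbeta_k})$ equals the specialization $\varphi\big(\langle\bdz^{\bdbeta_1},\dots,\bdz^{\bdbeta_{k-1}}\rangle:\bdz^{\bdbeta_k}\big)=\langle f_{j_1},\dots,f_{j_{r_k}}\rangle$ (which is $\frakm_{\calF}$ when $r_k\ge n+1$). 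Its minimal number of generators $r'_k$ is $r_k$ when $r_k\le n$ — using that $r_k$ of the forms $f_{j_1},\dots,f_{j_{r_k}}$ form a regular sequence by genericity, hence cut out a complete intersection of codimension $r_k$ — and is $n+1$ when $r_k\ge n+1$; so $r'_k\le r_k$ in all cases, with equality throughout in the $c$-generic setting of \Cref{prop:c-generic-ciq}, where $r_k\le c\le n$ by \eqref{eqn:projdim-quotients}.

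Next I would compare internal degrees. Since $\varphi(\bdz^{\bdbeta_k})$ has $R$-degree $d\deg_T(\bdz^{\bdbeta_k})$ while the quotient shift is $d$ over $R$ but $1$ over $T$, the contribution $\binom{r'_k}{i-1}$ of $\varphi(\bdz^{\bdbeta_k})$ lands in internal degree $d\deg_T(\bdz^{\bdbeta_k})+d(i-1)=d\,(\deg_T(\bdz^{\bdbeta_k})+i-1)$, exactly $d$ times that of the matching $T$-contribution. Summing over the generators of a fixed $T$-degree $j$ on the $T$ side and over the surviving generators of degree $j$ on the $R$ side, and using monotonicity of $\binom{\,\cdot\,}{i-1}$ together with $0\le r'_k\le r_k$ and the nonnegativity of the omitted terms, I get
\[
    \beta^R_{i,\,dj}(R/\varphi(I))=\sum_{\substack{k\ \text{surviving}\\ \deg_T(\bdz^{\bdbeta_k})+i-1=j}}\binom{r'_k}{i-1}\ \le\ \sum_{\deg_T(\bdz^{\bdbeta_k})+i-1=j}\binom{r_k}{i-1}=\beta^T_{i,\,j}(T/I)
\]
for every $i\ge 1$, which is the first assertion; moreover every nonzero Betti number of $R/\varphi(I)$ sits in an internal degree divisible by $d$, because every minimal generator of $\varphi(I)$ has degree divisible by $d$ and the shift is $d$.

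Finally, for the $a$-fold product ideal one specializes to $I=I_a(z_1^{m_1}\cdots z_s^{m_s})$, which is polymatroidal, hence has linear quotients, and is equi-generated in degree $a$. Thus $\deg_T(\bdz^{\bdbeta_k})=a$ for all $k$, so $\beta^T_{i,j}(T/I)=0$ unless $j=a+i-1$, and every minimal generator of $\varphi(I)=I_a(f_1^{m_1}\cdots f_s^{m_s})$ has degree $da$, whence $\beta^R_{i,j'}=0$ unless $j'=da+d(i-1)=d(a+i-1)$; substituting $j=a+i-1$ in the displayed inequality gives \eqref{eqn:compare-betti-fold-product}. The only point requiring care is tracking which specialized generators remain minimal and matching up their successive colon ideals with the ones upstairs — but this is precisely what the proofs of \Cref{prop:strongly-generic-ciq} and \Cref{prop:c-generic-ciq} already supply, so no new obstacle appears.
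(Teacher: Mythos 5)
Your proof is correct and follows essentially the same route as the paper: both read the Betti tables off the successive colon-ideal data supplied by the (d-)c.i.~quotients via Mantero's formula, observe that specialization can only shrink the minimal number of generators of each colon ideal, and conclude by matching summands degree by degree (with the internal degrees scaled by $d$). The paper's own proof is just a terser version of this comparison; your extra care about discarded generators and the equi-generation of $I_a(z_1^{m_1}\cdots z_s^{m_s})$ fills in details the paper leaves implicit.
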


\begin{proof}
    The two quotient modules $R/\varphi(I)$ and $T/I$ have graded minimal free resolutions $\calF_\bullet=\bigoplus_i F_i$ and $\calF_\bullet'=\bigoplus_i F_i'$ respectively such that for each direct summand 
    \[
        R(-d(\deg(\bdz^{\bdbeta_k})+(i-1)))^{\binom{r_k}{i-1}}
    \]
    of $F_i$, there is a corresponding direct summand 
    \[
        T(-(\deg(\bdz^{\bdbeta_k})+(i-1)))^{\binom{r_k'}{i-1}}
    \]
    of $F_i'$. Furthermore, $r_k\le r_k'$ for each $k$. The rest of the argument is clear.
\end{proof}

\begin{Corollary}
    \label{cor:compare-projdim}
    Under the assumptions in \Cref{prop:strongly-generic-ciq} or \Cref{prop:c-generic-ciq}, we have
    \[
        \projdim_R(R/\varphi(I))\le \projdim_T(T/I).
    \]
\end{Corollary}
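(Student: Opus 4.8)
The plan is to read this off directly from the term-by-term domination of minimal free resolutions that underlies the preceding Proposition, using the homological description of projective dimension as the largest homological degree carrying a nonzero free term in a minimal resolution. Note that the flat base change argument is not available here, since $\varphi$ itself need not be flat (in the strongly generic case only $f_1,\dots,f_{n+1}$ form a regular sequence, and in the $c$-generic case only the subsets of size $c+1$ do), so the Betti-number comparison is the right tool.

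First I would recall from the proof of the preceding Proposition that $R/\varphi(I)$ and $T/I$ have graded minimal free resolutions $\calF_\bullet=\bigoplus_i F_i$ and $\calF_\bullet'=\bigoplus_i F_i'$ in which the $i$-th terms decompose over a common index set $k$ — running through the minimal monomial generators $\bdz^{\bdbeta_k}$ together with the numbers $r_k$, resp.\ $r_k'$, of generators of the successive colon ideals — as
\[
    F_i=\bigoplus_k R\bigl(-d(\deg(\bdz^{\bdbeta_k})+(i-1))\bigr)^{\binom{r_k}{i-1}},
    \qquad
    F_i'=\bigoplus_k T\bigl(-(\deg(\bdz^{\bdbeta_k})+(i-1))\bigr)^{\binom{r_k'}{i-1}},
\]
with $r_k\le r_k'$ for every $k$. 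Summing over $j$ in the inequality $\beta_{i,dj}^R(R/\varphi(I))\le\beta_{i,j}^T(T/I)$ of that Proposition — and using that, by the displayed shape of $F_i$, the number $\beta_{i,j}^R(R/\varphi(I))$ vanishes unless $d\mid j$ — yields $\beta_i^R(R/\varphi(I))\le\beta_i^T(T/I)$ for all $i\ge1$ for the total Betti numbers.

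Then I would set $p\coloneqq\projdim_T(T/I)$. For $i>p$ we have $F_i'=0$, i.e.\ $\binom{r_k'}{i-1}=0$, equivalently $r_k'<i-1$, for every $k$; since $r_k\le r_k'$ the same holds with $r_k$ in place of $r_k'$, so $F_i=0$ as well. Hence $\calF_\bullet$ has length at most $p$, which is precisely $\projdim_R(R/\varphi(I))\le\projdim_T(T/I)$. There is no real obstacle here; the only point deserving a word of care is that the comparison must be carried out between \emph{minimal} resolutions, so that vanishing of the free term $F_i$ is equivalent to vanishing of the $i$-th total Betti number — and this is automatic because $\calF_\bullet$ and $\calF_\bullet'$ were taken minimal from the start.
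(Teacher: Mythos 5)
Your argument is correct and is essentially the paper's: the corollary is stated without a separate proof precisely because it follows from the term-by-term domination $r_k\le r_k'$ of the successive colon ideals established for the preceding Proposition, which is exactly what you invoke. (The intermediate step summing graded Betti numbers over $j$ is harmless but unnecessary, since your final paragraph already reads the length bound directly off the minimal resolutions.)
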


Conversely, equality in \eqref{eqn:compare-betti-fold-product} for $i=2$ will
imply that $\calF$ actually forms a regular sequence in the zero-dimensional case.

\begin{Corollary}
    Under the assumptions in \Cref{prop:strongly-generic-ciq}, suppose in
    addition that $I\subset T$ is zero-dimensional and equi-generated with 
    $\beta_{2,d(a+1)}^R(R/\varphi(I)) = \beta_{2,a+1}^T(T/I)$, then
    $f_1,\dots,f_s$ is indeed a regular sequence.
\end{Corollary}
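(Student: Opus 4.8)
The plan is to use zero-dimensionality to pin $\varphi(I)$ down to a power of the complete intersection $\frakm_{\calF}$, and then to read off $s=n+1$ from a comparison of second Betti numbers across polynomial rings with different numbers of variables; genericity then closes the argument.

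First I would reduce to the case $I=\frakn^{a}$, where $a$ denotes the common generating degree and $\frakn=\braket{z_1,\dots,z_s}$. A zero-dimensional monomial ideal equi-generated in degree $a$ must have $z_i^{a}$ among its minimal generators for each $i$: some pure power $z_i^{N}$ lies in it, and a pure power of $z_i$ can be divisible by a minimal generator only if that generator is itself a pure power of $z_i$, hence is $z_i^{a}$. For the Veronese-type ideal $I=I_a(z_1^{m_1}\cdots z_s^{m_s})$ this says $a\le m_i$ for every $i$, so all the bounds $n_i\le m_i$ become vacuous and $I=\frakn^{a}$. Consequently $\varphi(I)=\braket{f_1,\dots,f_s}^{a}=\frakm_{\calF}^{a}$, since $\calF$ is strongly generic and $s\ge n+1$; recall that $\frakm_{\calF}=\braket{f_1,\dots,f_{n+1}}$ is generated by the $R$-regular sequence of $d$-forms $f_1,\dots,f_{n+1}$.

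Next I would run the flat base change. Put $T'=\KK[z_1,\dots,z_{n+1}]$, let $\frakn'=\braket{z_1,\dots,z_{n+1}}$, and let $\psi\colon T'\to R$ be given by $z_i\mapsto f_i$. As $f_1,\dots,f_{n+1}$ is a regular sequence, $\psi$ is flat, $\psi((\frakn')^{a})=\frakm_{\calF}^{a}$, and by \Cref{key-facts}(a) the graded minimal free resolution of $R/\frakm_{\calF}^{a}$ is that of $T'/(\frakn')^{a}$ with all internal degrees multiplied by $d$; in particular
\[
    \beta_{2,d(a+1)}^{R}\bigl(R/\varphi(I)\bigr)=\beta_{2,d(a+1)}^{R}(R/\frakm_{\calF}^{a})=\beta_{2,a+1}^{T'}(T'/(\frakn')^{a}).
\]
On the other hand, $\frakn^{a}$ (in any number $N$ of variables) is a stable monomial ideal with a linear resolution, so $\beta_{2,j}(T/\frakn^{a})$ vanishes unless $j=a+1$ and, by the Eliahou--Kervaire formula,
\[
    \beta_{2,a+1}(T/\frakn^{a})=\sum_{u\in G(\frakn^{a})}\bigl(\max(u)-1\bigr)=\sum_{m=1}^{N}(m-1)\binom{m+a-2}{a-1}=a\binom{N+a-1}{a+1},
\]
where $\max(u)$ is the largest index of a variable dividing $u$. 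Since $a\ge 1$, every summand with $m\ge 2$ is strictly positive, so this quantity is strictly increasing in $N$ (for $N\ge 1$). Applying this with $N=s$ (for $I=\frakn^{a}\subset T$) and with $N=n+1$ (via the displayed equality), the hypothesis $\beta_{2,d(a+1)}^{R}(R/\varphi(I))=\beta_{2,a+1}^{T}(T/I)$ reads $a\binom{n+a}{a+1}=a\binom{s+a-1}{a+1}$, and monotonicity forces $s=n+1$. With $s=n+1$, the set $\calF=\{f_1,\dots,f_{n+1}\}$ is generic, i.e.\ $n$-generic, so $f_1,\dots,f_{n+1}=f_1,\dots,f_s$ is a regular sequence.

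The step I expect to be most delicate is the first: one must be sure that zero-dimensionality really collapses the Veronese-type ideal all the way down to $\frakn^{a}$, so that $\varphi(I)$ is literally a power of the complete intersection $\frakm_{\calF}$ and \Cref{key-facts}(a) applies; and, in the comparison, that $\beta_{2,\bullet}^{T}(T/I)$ is supported in the single internal degree $a+1$, so that the assumed equality is exactly the one estimated above — both points are supplied by the linearity of the resolution of a power of the maximal ideal. The remaining computations are routine.
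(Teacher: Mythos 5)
Your argument is correct and complete for the case it actually treats, namely $I=\frakn^{a}$ with $\frakn=\braket{z_1,\dots,z_s}$, and there it takes a genuinely different --- and more explicit --- route than the paper's. The paper never identifies $\varphi(I)$: it compares, generator by generator, the number $r_i$ of variables generating the successive colon ideals of $I$ in $T$ with the number $r_i'$ of forms generating the corresponding colon ideals of $\varphi(I)$ in $R$, observes that the two Betti numbers in the hypothesis are $\sum_i r_i$ and $\sum_i r_i'$ with $r_i\ge r_i'$ termwise, and then extracts $s=\dim T=\dim R=n+1$ from the fact that zero-dimensionality forces some $r_{i_0}$ to be maximal while $r_{i_0}'$ is capped by $\dim R$. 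You instead pin $\varphi(I)$ down to $\frakm_{\calF}^{a}$, transport its resolution to $\KK[z_1,\dots,z_{n+1}]$ by flatness, and compare two explicit Eliahou--Kervaire counts that are strictly increasing in the number of variables; I checked the closed form $a\binom{N+a-1}{a+1}$ and the monotonicity, and they are right. This buys a self-contained numerical identity and, incidentally, sidesteps the delicate bookkeeping between $\projdim_T(I)$ and $\projdim_T(T/I)$ that the colon-counting argument must get exactly right.

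The one genuine gap is the reduction to $I=\frakn^{a}$. The corollary inherits the hypotheses of \Cref{prop:strongly-generic-ciq}, where $I$ is an \emph{arbitrary} monomial ideal with linear quotients, not necessarily of Veronese type, and your justification that $I=\frakn^{a}$ is given only for $I=I_a(z_1^{m_1}\cdots z_s^{m_s})$: for a general equigenerated zero-dimensional monomial ideal, knowing that every pure power $z_i^{a}$ is a minimal generator does not force $I=\frakn^{a}$ (one also needs to rule out missing mixed monomials of degree $a$). The missing step is supplied by the linear resolution. Since $I$ has linear quotients and is generated in degree $a$, one has $\reg(T/I)=a-1$; since $T/I$ is Artinian, its regularity equals its top nonvanishing degree, so $(T/I)_j=0$ for all $j\ge a$, i.e. $\frakn^{a}\subseteq I$. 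The reverse containment is clear because $I$ is generated in degree $a$, whence $I=\frakn^{a}$. With that observation inserted at the start, your proof covers the statement in full generality.
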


\begin{proof}
    Suppose that in the proof of \Cref{prop:strongly-generic-ciq}, $\braket{\bdz^{\bdbeta_1},\dots,\bdz^{\bdbeta_i}}:\bdz^{\bdbeta_{i+1}}$ is minimally generated by $r_i$ variables for $i=0,1,\dots,r-1$.  Meanwhile, suppose that $\braket{\varphi(\bdz^{\bdbeta_1}), \dots, \varphi(\bdz^{\bdbeta_i})} : \varphi(\bdz^{\bdbeta_{i+1}})$ is minimally generated by $r_i'$ forms in $\calF$; when this colon ideal is $R$, we will choose $r_i'=0$.  We have mentioned earlier that $r_i\ge r_i'$ for all $i$. It is clear that the two total Betti numbers in the condition are $\sum_{i}r_i$ and $\sum_i r_i'$ respectively.  Thus, the equality of these two Betti numbers implies that $r_i=r_i'$ for all $i$. But as $T/I$ is zero-dimensional, $\projdim_T(T/I)=\dim(T)$. Thus, by applying \eqref{eqn:projdim-quotients}, one has
    \[
        r_{i_0}=\dim(T)\ge \dim(R)\ge r_{i_0}' 
    \]
    for some $i_0$. Therefore, by our strongly generic assumption, this implies that $n+1=\dim(R)=\dim(T)=s$ and $f_1,\dots,f_s$ indeed form a regular sequence.
\end{proof}

\section{Symbolic powers of saturated uniform $a$-fold product ideal}

Inspired by the above work on the graded minimal free resolution of
$I_a(f_1^{m_1}\cdots f_s^{m_s})$, it is natural to seek a detailed description
of the Betti table of $I_{a}(z_1^{m_1}\cdots z_s^{m_s})$ in
$T=\KK[z_1,\dots,z_s]$. A starting point will be treating the uniform case when
$m_1=\cdots=m_s=b$. This case can be handled with ease by the results from
\cite{arXiv:1907.04288}. Indeed, more information can be retrieved. We will be able to scrutinize the projective dimension of this ideal, which will in turn allow us to describe its symbolic powers in some nice cases, via the decomposition work in Section 2. After that, we will study some asymptotic quantities related to the symbolic powers in these cases.

But first, we have to recall some notations from \cite{arXiv:1907.04288}. A sequence
$\bdlambda=(\lambda_1,\dots,\lambda_s)$ of non-negative integers is called a
\emph{partition} of $d$ of length $s$, if $\lambda_1\le \cdots \le \lambda_s$
and $|\bdlambda|\coloneqq \lambda_1+\cdots+\lambda_s=d$. The symmetric group $\frakS_s$ acts on $T=\KK[z_1,\dots,z_s]$ by permutations of the variables. Let $I\subset T$ be an $\frakS_s$-fixed monomial ideal. Set 
\[
    \calP(I)\coloneqq\Set{\bdlambda\text{ is a partition of length $s$}: \bdz^{\bdlambda}\in I} 
\]
and 
\[
    \Lambda(I)\coloneqq\Set{\bdlambda:\bdz^{\bdlambda} \text{ is a minimal monomial generator of $I$}}.
\]
It is clear that $\Lambda(I)$ is the set of minimal elements in $\calP(I)$ with respect to the partial ordering of componentwise comparison.

The ideal $I$ is called \emph{symmetric
    shifted} if for every $\bdlambda=(\lambda_1,\dots,\lambda_s)\in \calP(I)$
(or equivalently, $\bdlambda\in \Lambda(I)$) and $1\le k<s$ with $\lambda_k<\lambda_s$,
one has $\bdz^{\bdlambda}z_k/z_s \in I$.

Given any partition $\bdlambda=(\lambda_1,\dots,\lambda_s)$ of $d$, one defines
\[
    p(\bdlambda)\coloneqq|\Set{k:\lambda_k<\lambda_s-1}| \quad \text{and}\quad
    r(\bdlambda)\coloneqq\left| \Set{k:\lambda_k=\lambda_s} \right|.
\]
Besides, its \emph{truncation} is $\bdlambda_{\le k}\coloneqq(\lambda_1,\dots,\lambda_k)$,
and its \emph{type} is $\type(\bdlambda)\coloneqq (t_0,\dots,t_d)$ where
$t_i=\left| \Set{k:\lambda_k=i} \right|$. Furthermore, we set
$\type(\bdlambda)!\coloneqq \prod_{i} t_i!$.

The key result that we shall apply repeatedly is the following.

\begin{Lemma}
    [{\cite[Theorem 3.2, Corollary 5.6]{arXiv:1907.04288}}]
    \label{lem:ss-formula}
    If $I\subset T$ is symmetric shifted, then $I$ has linear quotients and
    \begin{equation}
        \scalebox{0.9}{%
        $\displaystyle\beta_{i,i+d}(I)=\sum_{\bdlambda\in\Lambda(I),|\bdlambda|=d}\left(
            \sum_{k+l=i}\frac{p(\bdlambda)!}{\type(\bdlambda_{\le p(\bdlambda)})!}
            \binom{s}{p(\bdlambda)} \binom{s-p(\bdlambda)}{r(\bdlambda)+k}
            \binom{r(\bdlambda)+k-1}{k}\binom{p(\bdlambda)}{l}\right).$}
        \label{eqn:betti-formula}
    \end{equation}
\end{Lemma}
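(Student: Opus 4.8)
The statement is quoted verbatim from \cite{arXiv:1907.04288}, so what follows is only a sketch of the route one would take to prove it from scratch. The argument splits cleanly into the \emph{linear quotients} assertion and the \emph{Betti number count}.

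\textbf{Linear quotients.} First I would record the elementary reformulation of the hypothesis made possible by the $\frakS_s$-invariance: the condition ``$\bdz^{\bdlambda}z_k/z_s\in I$ whenever $\lambda_k<\lambda_s$'' upgrades to ``for every monomial $\bdz^{\bda}\in I$ and every pair of indices $i,j$ with $a_i<a_j$ one has $\bdz^{\bda}z_i/z_j\in I$''. Next I would fix a total order on the full minimal generating set $G(I)$ which refines, within each degree, a symmetrization of the lexicographic order that makes strongly stable ideals have linear quotients, with lower-degree generators placed after the higher-degree ones. In the spirit of the Eliahou--Kervaire analysis of stable ideals, for each generator $u=\bdz^{\bda}$ one then checks that the colon ideal $\langle v\in G(I):v>u\rangle:u$ is generated by the variables $z_i$ in a set $\operatorname{set}(u)\subseteq\{z_1,\dots,z_s\}$: indeed $z_i\in\operatorname{set}(u)$ precisely when $z_iu$ is divisible by some generator strictly larger than $u$ in the order, and the box-move reformulation above is exactly what supplies such a larger generator for the expected indices $i$. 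This yields linear quotients with all successive colon ideals generated by subsets of variables; only the degree-$d$ generators matter for the strand $\beta_{\bullet,\bullet+d}$.

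\textbf{Identifying $\operatorname{set}(u)$ and assembling the answer.} By the standard mapping-cone formula for ideals with linear quotients (Herzog--Takayama), $\beta_{i,i+d}(I)=\sum_{u\in G(I),\,\deg u=d}\binom{|\operatorname{set}(u)|}{i}$, so the whole problem reduces to computing $|\operatorname{set}(u)|$ for each rearrangement $u=\bdz^{\bda}$ of a partition $\bdlambda$ with $|\bdlambda|=d$, and then summing over the $\frakS_s$-orbit of $\bdlambda$. I would show that $|\operatorname{set}(u)|$ depends only on the placement of the \emph{maximal} coordinates of $\bda$ (those equal to $\lambda_s$, which can donate a box), the coordinates equal to $\lambda_s-1$ (``almost full'', which straddle the next two cases), and the positions of the ``small'' coordinates (those $<\lambda_s-1$, which can freely receive a box and land strictly below the maximum, producing a genuinely larger generator). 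The invariants $r(\bdlambda)$ and $p(\bdlambda)$ are designed precisely to bookkeep the maximal block and the small block respectively, and this explains the telltale ``$-1$'' in the definition of $p(\bdlambda)$. The remaining step is pure combinatorics: the number of rearrangements of $\bdlambda$ in which a prescribed $p(\bdlambda)$-subset of slots carries the small coordinates, arranged in a prescribed fashion, is $\frac{p(\bdlambda)!}{\type(\bdlambda_{\le p(\bdlambda)})!}\binom{s}{p(\bdlambda)}$, while the freedom left in distributing the maximal and the $(\lambda_s-1)$-coordinates, together with the split $i=k+l$ recording how many contributing variables come from the maximal block ($k$) versus the small block ($l$), generates the factor $\binom{s-p(\bdlambda)}{r(\bdlambda)+k}\binom{r(\bdlambda)+k-1}{k}\binom{p(\bdlambda)}{l}$. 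Collecting these contributions gives \eqref{eqn:betti-formula}.

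\textbf{Main obstacle.} The linear quotients step is a routine, if slightly fiddly, generalization of Eliahou--Kervaire. The genuine difficulty lies in the exact determination of $\operatorname{set}(u)$ for an \emph{arbitrary} rearrangement of $\bdlambda$ and in the ensuing resummation over the orbit: one must verify that the naive multinomial sum collapses, via Vandermonde-type identities, to the stated product of binomials, and in particular that the coordinates equal to $\lambda_s-1$ are allocated to the correct block in every term, since it is precisely their ambiguous status that forces the ``$<\lambda_s-1$'' (rather than ``$<\lambda_s$'') in the definition of $p(\bdlambda)$.
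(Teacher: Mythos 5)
This lemma is imported verbatim from Biermann et al.\ \cite{arXiv:1907.04288}; the paper under review offers no proof of its own beyond the citation, so the only meaningful comparison is with the argument in that source. Your two-step outline --- (1) reformulate symmetric shiftedness as a box-exchange property, order the generators so that each successive colon ideal is generated by variables, in the spirit of Eliahou--Kervaire; (2) apply the Herzog--Takayama mapping-cone formula $\beta_{i,i+d}(I)=\sum_{u}\binom{|\operatorname{set}(u)|}{i}$ and resum over the $\frakS_s$-orbit of each partition --- is indeed the strategy of the cited reference, and your identification of $r(\bdlambda)$ and $p(\bdlambda)$ as bookkeeping for the maximal and small blocks of coordinates is the right heuristic.

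That said, as a proof the proposal has a genuine gap, and you name it yourself: the exact determination of $|\operatorname{set}(u)|$ for an arbitrary rearrangement $u$ of $\bdlambda$, and the verification that the orbit sum collapses to the displayed product of binomials with the inner sum over $k+l=i$, are asserted rather than carried out. This is not a routine bookkeeping step --- it is the entire content of \cite[Corollary 5.6]{arXiv:1907.04288}, and the treatment of the coordinates equal to $\lambda_s-1$ (which, as you note, straddle the two blocks) is exactly where a naive count goes wrong. Two further points deserve care. First, your ordering convention (``lower-degree generators placed after the higher-degree ones'') appears reversed: for the successive colon ideals of a non-equigenerated componentwise linear ideal to be generated by variables, the admissible order must list generators in non-decreasing degree, so that each new generator is coloned against generators of degree at most its own. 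Second, the claim that ``only the degree-$d$ generators matter for the strand $\beta_{\bullet,\bullet+d}$'' presupposes that the iterated mapping cone is minimal across all degrees, which is a consequence of the linear-quotients property but should be invoked explicitly. In short: correct roadmap, same route as the source, but the combinatorial core is left unproven.
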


Back to our uniform $a$-fold product ideal $I=I_a(z_1^b\cdots z_s^b)$ in $T=\KK[z_1,\dots,z_s]$.
Then 
\[
\Lambda(I)=\Set{\bdlambda=(\lambda_1,\dots,\lambda_{s}): \text{$\bdlambda$ is a partition of $a$ with $\lambda_{s}\le b$}}.
\]
One can check with ease that $I$ is {symmetric shifted}.  Consequently, its Betti table is clear by the previous lemma.

\begin{Example}
    Consider the ideal $I=I_a(z_1^2\cdots z_s^2)$ in $T=\KK[z_1,\dots,z_s]$.  This ideal is symmetric shifted and $\Lambda(I)$ consists of partitions 
    \[
    \bdlambda_{u,v}\coloneqq (0^{s-u-v},1^u,2^v)=(\underbrace{0,\dots,0}_{s-u-v},\underbrace{1,\dots,1}_{u},\underbrace{2,\dots,2}_{v})
    \] 
    with $u+2v=a$ and $v\ge 1$, as well as the partition 
    \[
    \bdlambda_{a,0}\coloneqq (0^{s-a},1^a)=(\underbrace{0,\dots,0}_{s-a},\underbrace{1,\dots,1}_{a}).
    \]
    It is easy to see that 
    \[
    p(\bdlambda_{a,0})=0 \text{ and } r(\bdlambda_{a,0})=a.
    \]
    And if $v>0$, then
    \[
        p(\bdlambda_{u,v})=s-u-v \text{ and } r(\bdlambda_{u,v})=v.
    \]
    Therefore, the formula \eqref{eqn:betti-formula} says
    \begin{align*}
        \beta_{i,i+a}(I)&=
        \sum_{k+l=i}\frac{0!}{0!}\binom{s}{0}\binom{s-0}{a+k}\binom{a+k-1}{k}\binom{0}{l} \\
        &+ \sum_{v=1}^{\floor{a/2}}\sum_{k+l=i}\frac{(s+v-a)!}{(s+v-a)!}\binom{s}{s+v-a}\binom{a-v}{v+k}\binom{v+k-1}{k}\binom{s+v-a}{l}\\
        &= \binom{s}{a+i}\binom{a+i-1}{i}+\sum_{v=1}^{\floor{a/2}}\sum_{k+l=i}\binom{s}{s+v-a}\binom{a-v}{v+k}\binom{v+k-1}{k}\binom{s+v-a}{l}.
    \end{align*}
    For instance, when $a=3$ and $s=4$, the above formula says that
    \[
        \beta_{0,3}(I)=16, \quad
        \beta_{1,4}(I)=33, \quad
        \beta_{2,5}(I)=24, \quad
        \beta_{3,6}(I)=6, \quad
        \beta_{4,7}(I)=0.
    \]
    It agrees with the following Betti table given by \texttt{Macaulay2}
    \cite{M2}:
    \begin{verbatim}
               0  1  2  3 4
        total: 1 16 33 24 6
            0: 1  .  .  . .
            1: .  .  .  . .
            2: . 16 33 24 6
    \end{verbatim}
\end{Example}

\subsection{Projective dimension and primary-type decomposition}
The ultimate aim of this subsection is to give a clean formula for the symbolic powers of the ideal of generalized star configuration $I_a(\calF^b)=I_a(f_1^{b}\cdots f_s^b)$ in some nice cases. To achieve that, we start with investigating the projective dimension in the monomial case. 

\begin{Proposition}
    \label{prop:projdim-monomial}
    Consider the uniform $a$-fold product ideal $I=I_a(z_1^b\cdots z_s^b)$ in
    $T=\KK[z_1,\dots,z_s]$ with $b\le a\le bs$. 
    \begin{enumerate}[i]
        \item The ideal $I$ is saturated if and only if $a> (b-1)s+1$. 
        \item If $a> (b-1)s+1$, then $\projdim_T(T/I)=s-\delta$ for $\delta \coloneqq a- (b-1)s-1$.
    \end{enumerate}
\end{Proposition}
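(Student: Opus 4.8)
My plan is to obtain the projective‑dimension statement (ii) by reading the top of the graded minimal free resolution off the Betti‑number formula of \Cref{lem:ss-formula}, and then to deduce the saturation criterion (i) from (ii) together with one explicit $\frakm$‑torsion element.

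\emph{Reducing (ii) to a combinatorial minimum.} Recall that $I=I_a(z_1^b\cdots z_s^b)$ is symmetric shifted with $\Lambda(I)=\{\bdlambda:\bdlambda\text{ a partition of }a\text{ of length }s\text{ with }\lambda_s\le b\}$; by \Cref{lem:ss-formula}, $\beta_{i,j}(I)=0$ unless $j=i+a$, so $\projdim_T(T/I)=1+\max\{\,i:\beta_{i,i+a}(I)\ne0\,\}$. I would then inspect the $\bdlambda$‑summand of \eqref{eqn:betti-formula}: each term of the inner sum over $k+l=i$ is non‑negative, while the factors $\binom{s}{p(\bdlambda)}$ and $\binom{r(\bdlambda)+k-1}{k}$ are always strictly positive (the latter because $r(\bdlambda)\ge1$). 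Hence the $\bdlambda$‑summand is nonzero exactly when there exist $k,l\ge0$ with $k+l=i$, $r(\bdlambda)+k\le s-p(\bdlambda)$ and $l\le p(\bdlambda)$; since the index sets counted by $p(\bdlambda)$ and $r(\bdlambda)$ are disjoint, $p(\bdlambda)+r(\bdlambda)\le s$, and the largest admissible $i$ is $(s-p(\bdlambda)-r(\bdlambda))+p(\bdlambda)=s-r(\bdlambda)$. As the $\bdlambda$‑summands cannot cancel, this yields $\projdim_T(T/I)=s+1-\min_{\bdlambda\in\Lambda(I)}r(\bdlambda)$. Teasing out exactly which homological degrees survive from the multi‑binomial sum is the one step that needs care; the rest is elementary.

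\emph{The combinatorial minimum when $\delta\ge1$.} Assume $a>(b-1)s+1$, i.e.\ $\delta=a-(b-1)s-1\ge1$. No $\bdlambda\in\Lambda(I)$ can have largest part $\le b-1$, for then $a=|\bdlambda|\le(b-1)s<a$; thus every $\bdlambda\in\Lambda(I)$ has largest part exactly $b$. If $\bdlambda$ has $r=r(\bdlambda)$ parts equal to $b$, the remaining $s-r$ parts are at most $b-1$, so $a\le rb+(s-r)(b-1)$, which rearranges to $r\ge a-s(b-1)=\delta+1$. Since $a\le bs$ forces $\delta+1\le a/b$ and $\delta+1\le s$, this lower bound is realized by the partition consisting of $\delta+1$ entries equal to $b$ followed by $s-\delta-1$ entries equal to $b-1$, which one checks has size $a$ and length $s$ and hence lies in $\Lambda(I)$. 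Therefore $\min_{\bdlambda\in\Lambda(I)}r(\bdlambda)=\delta+1$, and the displayed formula gives $\projdim_T(T/I)=s-\delta$, proving (ii).

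\emph{The saturation criterion (i).} If $a>(b-1)s+1$, then (ii) and the Auslander--Buchsbaum formula give $\depth_T(T/I)=\delta\ge1$, so $\frakm=(z_1,\dots,z_s)\notin\Ass_T(T/I)$ and $I$ is saturated. Conversely, if $a\le(b-1)s+1$, then $a-1\le(b-1)s$, so I may choose a monomial $u$ of degree $a-1$ each of whose exponents is at most $b-1$; then $u\notin I$ for degree reasons, whereas for every $i$ the monomial $z_iu$ has degree $a$ with all exponents $\le b$, hence is one of the generators of $I$. Thus $(I:u)=\frakm$, so $\frakm\in\Ass_T(T/I)$ and $I$ is not saturated. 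This completes the proof of (i).
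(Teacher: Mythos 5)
Your proof is correct. For part (ii) it follows the same route as the paper — reading the top of the resolution off the Betti-number formula of \Cref{lem:ss-formula} — but you package the computation more transparently: the observation that each $\bdlambda$-summand of \eqref{eqn:betti-formula} is nonzero precisely for $0\le i\le s-r(\bdlambda)$, hence $\projdim_T(T/I)=s+1-\min_{\bdlambda\in\Lambda(I)}r(\bdlambda)$, isolates the only combinatorial input needed, namely $\min r(\bdlambda)=\delta+1$, and your verification of that minimum matches the paper's choice of extremal partition $((b-1)^{s-1-\delta},b^{1+\delta})$. Where you genuinely diverge is the ``only if'' direction of (i): the paper again invokes the Betti formula to produce a partition with $r(\bdlambda_0)=1$ and conclude $\projdim_T(T/I)=s$, whereas you exhibit an explicit monomial $u$ of degree $a-1$ with all exponents at most $b-1$ satisfying $(I:u)=\frakm$, so that $\frakm\in\Ass(T/I)$ directly. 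Your argument for that direction is more elementary and self-contained (it does not need the symmetric-shifted machinery at all), at the cost of not simultaneously recording that $\projdim_T(T/I)=s$ in the non-saturated range — information the paper's version yields for free but which is not needed for the statement.
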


\begin{proof}
    Depending on the parameter $a$, we have two cases.
    \begin{enumerate}[a]
        \item If $b\le a\le s(b-1)+1$, we can always find some $\bdlambda_0=(\lambda_1,\dots,\lambda_s)\in \Lambda(I)$ such that $\lambda_{s-1}<\lambda_{s}=b$. This in particular means $r(\bdlambda_0)=1$.  Now, we consider the formula \eqref{eqn:betti-formula} with $i=s-1$, $\bdlambda=\bdlambda_0$, $k=s-p(\bdlambda_0)-1$ and $l=p(\bdlambda_0)$. This combination of setting contributes a positive value to the summation. Hence $\beta_{s-1,s-1+a}(I)\ne 0$ and $\projdim(T/I)=s=\dim(T)$. This implies that $\depth(T/I)=0$ and $I$ is not saturated.

        \item If $a>(b-1)s+1$, then for each fixed $\bdlambda=(\lambda_1,\dots,\lambda_s)\in \Lambda(I)$, we will have $\lambda_s=b$. As $|\bdlambda|=a=(b-1)s+1+\delta$, $r(\bdlambda)\ge 1+\delta$. Now for $i\ge s-\delta$, 
        \[
        (r(\bdlambda)+k)+l=r(\bdlambda)+i\ge s+1>(s-p(\bdlambda))+p(\bdlambda)=s,
        \]
         giving
            \[
                \binom{s-p(\bdlambda)}{r(\bdlambda)+k} \binom{p(\bdlambda)}{l}=0
            \]
            in the formula \eqref{eqn:betti-formula}. This implies that the total Betti number $\beta_{i}(I)=0$ for $i\ge s-\delta$.

            Meanwhile, we can find the partition $\bdlambda_0=((b-1)^{s-1-\delta}, b^{1+\delta})\in \Lambda(I)$ with $r(\bdlambda_0)=\delta+1$.  An argument as above will show that $\beta_{s-\delta-1}(I)\ne 0$.  Therefore, $\projdim(I)=s-\delta-1$, or equivalently, $\projdim(T/I)= s-\delta$. Whence, $I$ is saturated.
            \qedhere
    \end{enumerate}
\end{proof}

\begin{Proposition}
    \label{prop:symbolic-power-strongly-generic}
    With the assumptions in \Cref{setting-uniform-fold-product}, we further assume that $a>(b-1)s+1$ and write $\delta\coloneqq a-(b-1)s-1$. If $\delta\ge s-n$ and $\calF=\{f_1,\dots,f_s\}$ is a set of $(s-\delta)$-generic $d$-forms in $R=\KK[x_0,\dots,x_n]$, then
    \[
        I_a(\calF^b)=
        \bigcap_{c=c_0}^{s-\delta} \left( 
        \bigcap_{1\le i_1<\cdots < i_c\le s} \braket{f_{i_1},\dots,f_{i_c}}^{\mu_a^0+b(c-c_0)}
        \right).
    \]
\end{Proposition}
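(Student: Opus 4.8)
The strategy is to combine the projective dimension bound from the monomial case with the refined decomposition result of \Cref{prop:dec-c-generic}, exactly as the setup of \Cref{rmk:ass-cond} suggests. First I would record that, under \Cref{setting-uniform-fold-product} together with the hypothesis $a>(b-1)s+1$, the monomial prototype $I_a(z_1^b\cdots z_s^b)$ in $T=\KK[z_1,\dots,z_s]$ satisfies $\projdim_T(T/I_a(z_1^b\cdots z_s^b))=s-\delta$ by \Cref{prop:projdim-monomial}(ii). Then, since $\calF$ is a set of $(s-\delta)$-generic $d$-forms, the specialization hypothesis $\projdim_T(I_a(z_1^b\cdots z_s^b))\le s-\delta-1\le n$ is in force (note $\delta\ge s-n$ forces $s-\delta\le n$), so \Cref{cor:compare-projdim} gives
\[
    \projdim_R\bigl(R/I_a(\calF^b)\bigr)\le \projdim_T\bigl(T/I_a(z_1^b\cdots z_s^b)\bigr)=s-\delta.
\]

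Next I would invoke \Cref{rmk:ass-cond}: by \cite[Proposition 1.2.13 and Theorem 1.3.3]{MR1251956}, every associated prime $\frakq\in\Ass(R/I_a(\calF^b))$ satisfies $\Ht(\frakq)\le \projdim_R(R/I_a(\calF^b))\le s-\delta$. Setting $\hat c\coloneqq s-\delta$, we have $\hat c\le n$ (again by $\delta\ge s-n$) and $\calF$ is $\hat c$-generic, so the hypotheses of \Cref{prop:dec-c-generic} are met. Applying that theorem yields
\[
    I_a(\calF^b)=\bigcap_{c=1}^{s-\delta}\Biggl(\bigcap_{1\le i_1<\cdots<i_c\le s}\braket{f_{i_1},\dots,f_{i_c}}^{\mu_a(i_1,\dots,i_c)}\Biggr).
\]
It then remains to rewrite this in the stated form. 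By the bookkeeping in \Cref{setting-uniform-fold-product} (and \eqref{eqn:a}), in the uniform case $m_1=\cdots=m_s=b$ the exponent $\mu_a(i_1,\dots,i_c)=a-b(s-c)$ is positive exactly when $c\ge c_0$, and equals $\mu_a^0+b(c-c_0)$ for $c_0\le c\le s$; any component with $c<c_0$ contributes the unit ideal $R$ and can be dropped. Hence the intersection over $1\le c\le s-\delta$ collapses to the intersection over $c_0\le c\le s-\delta$ with the claimed exponents, which is precisely the asserted formula. (One should also check $c_0\le s-\delta$ so the index range is nonempty; this follows from $a\le sb$ and $\delta=a-(b-1)s-1$, since then $c_0=s-\floor{(a-1)/b}\le s-\delta$ amounts to $\floor{(a-1)/b}\ge \delta=a-(b-1)s-1$, which holds.)

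The main obstacle is verifying that the projective dimension bound is genuinely available in the required direction — i.e., that \Cref{cor:compare-projdim} applies here. This in turn rests on the hypothesis that $\calF$ is $(s-\delta)$-generic together with $\projdim_T(I_a(z_1^b\cdots z_s^b))=s-\delta-1\le s-\delta\le n$, so that \Cref{prop:c-generic-ciq} (with $c=s-\delta$) and hence the specialization comparison is legitimate; the inequality $s-\delta\le n$ is exactly the content of the hypothesis $\delta\ge s-n$. Once that is in place, everything else is the elementary index bookkeeping of \Cref{setting-uniform-fold-product} applied to the output of \Cref{prop:dec-c-generic}, which is routine.
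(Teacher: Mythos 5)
Your proposal is correct and follows essentially the same route as the paper: bound $\projdim_R(R/I_a(\calF^b))$ by $s-\delta$ via \Cref{prop:projdim-monomial} and the specialization comparison of \Cref{cor:compare-projdim} (legitimized by the linear quotients of the Veronese-type monomial ideal and the $(s-\delta)$-generic hypothesis), then invoke \Cref{rmk:ass-cond} and \Cref{prop:dec-c-generic} with $\hat c=s-\delta$, and finish with the exponent bookkeeping from \Cref{setting-uniform-fold-product}. Your extra checks (that $s-\delta\le n$ follows from $\delta\ge s-n$, and that $c_0\le s-\delta$) are correct and only make the argument more complete.
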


\begin{proof}
    Consider as well $\calF_{\bdz}=\{z_1,\dots,z_s\}$ in $T=\KK[z_1,\dots,z_s]$. We have seen in the proof of \Cref{thm:strongly-generic-ciq} that $I_a(\calF_{\bdz}^{b})$ has linear quotients.  It follows from \Cref{cor:compare-projdim} and \Cref{prop:projdim-monomial} that
    \[
        \projdim_R(R/I_a(\calF^b))\le \projdim_T(T/I_a(\calF_{\bdz}^b))=s-\delta\le n.
    \]
    Therefore, we can apply the fact in \Cref{rmk:ass-cond} and obtain the decomposition in \Cref{prop:dec-c-generic} with $\hat{c}=s-\delta$. 
    
    In that decomposition, the exponent $\mu_a(i_1,\dots,i_c)=\mu_a^0+b(c-c_0)$ for $c_0\le c\le s-\delta$ by the direct computation at the end of Section 2. And when $c<c_0$, the exponent $\mu_a(i_1,\dots,i_c)<0$. Obviously, we can remove these redundant components from the decomposition.
\end{proof}

\begin{Remark}
    \label{rmk:ass-set}
    In order to study the symbolic powers of $I_a(\calF^b)$ later, we still need to determine its associate primes via \Cref{prop:symbolic-power-strongly-generic}. By the decomposition formula there, it is clear that $\Ass(R/I_a(\calF^b))$ is a subset of the disjoint union 
    \[
        \calA\coloneqq \bigsqcup_{\substack{c_0\le c\le s-\delta \\ 1\le i_1<\cdots<i_c\le s}} \Ass(R/\braket{f_{i_1},\dots,f_{i_c}}),
    \]
    since the forms in $\calF$ are locally complete intersections. On the other hand, let us take arbitrary $\frakp\in \calA$. Say, $\frakp\in \Ass(R/\braket{f_1,\dots,f_c})$. Then $I_a(\calF^b)R_{\frakp}=I_{a-b(s-c)}(f_1^b\cdots f_c^b)R_{\frakp}$ by our previous calculation in the equation \eqref{eqn:local-product}.
    Notice that $\dim(R_{\frakp})=c$ and the images of $f_1,\dots,f_c$ in $R_{\frakp}$ form a regular sequence. Since
    \[
    a-b(s-c)\le (b-1)c+1
    \]
    for $c_0\le c\le s-\delta$, it follows from \Cref{prop:projdim-monomial} that $I_{a-b(s-c)}(f_1^b\cdots f_c^b)R_{\frakp}$ is not saturated in $R_{\frakp}$. Therefore, $\frakp R_{\frakp}$ is an associated prime of $R_{\frakp}/I_{a-b(s-c)}(f_1^b\cdots f_c^b)R_{\frakp}$ and equivalently $\frakp$ is an associated prime of $R/I_a(\calF^b)$. In short, $\calA=\Ass(R/I_a(\calF^b))$.

    Consequently, the decomposition in \Cref{prop:symbolic-power-strongly-generic} is irredundant in the obvious sense.
\end{Remark}

Now, we are ready to state the first main result of this section.

\begin{Theorem}
    \label{prop:decomposition-strongly-generic-special}
    With the assumptions in \Cref{prop:symbolic-power-strongly-generic}, the symbolic powers of $I_{a}(\calF^b)$ can be written as
    \begin{align*}
        I_{a}(\calF^b)^{(m)}&=\bigcap_{c=c_0}^{s-\delta} 
        \left( 
            \bigcap_{1\le i_1<\cdots < i_c\le s} \braket{f_{i_1},\dots,f_{i_c}}^{m(\mu_a^0+b(c-c_0))}
        \right)\\
        &=\bigcap_{c=c_0}^{s-\delta} I_{c,\calF}^{(m(\mu_a^0+b(c-c_0)))}.
    \end{align*}
\end{Theorem}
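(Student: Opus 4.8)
The plan is to compute the symbolic power straight from its definition \eqref{eqn:symbolic-power}, using that \Cref{rmk:ass-set} already determines the associated primes. Write $I\coloneqq I_a(\calF^b)$ and let $J$ be the ideal on the right-hand side of the first displayed equality. Since $I_{c,\calF}^{(k)}=\bigcap_{1\le i_1<\cdots<i_c\le s}\braket{f_{i_1},\dots,f_{i_c}}^{k}$ by \eqref{eqn:symbolic-power-star-configuration}, the second equality is just a restatement of the first, so it suffices to prove $I^{(m)}=J$. Recall from \Cref{rmk:ass-set} that $\Ass(R/I)=\calA$, the disjoint union of the sets $\Ass(R/\braket{f_{i_1},\dots,f_{i_c}})$ over all $c_0\le c\le s-\delta$ and $1\le i_1<\cdots<i_c\le s$; moreover, for such $c$ the ideal $\braket{f_{i_1},\dots,f_{i_c}}$ is a complete intersection because $\calF$ is $(s-\delta)$-generic, so all of its associated primes have height exactly $c$.

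The first step is to verify that $J$ has no associated prime outside $\calA$. As $R/J$ embeds into the direct sum of the cyclic modules $R/\braket{f_{i_1},\dots,f_{i_c}}^{m(\mu_a^0+b(c-c_0))}$ taken over the components of $J$, this comes down to the classical fact that an ordinary power of a complete intersection ideal has no embedded associated prime; equivalently, $R/\braket{f_{i_1},\dots,f_{i_c}}^{k}$ is Cohen--Macaulay, its associated graded ring being a polynomial ring over $R/\braket{f_{i_1},\dots,f_{i_c}}$. One could also deduce this from the flat base change formula for associated primes, in the spirit of \Cref{key-facts}, since such a power is the image under a flat map of a power of a maximal ideal. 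In any case we conclude $J=\bigcap_{\frakp\in\calA}\bigl(JR_\frakp\cap R\bigr)$.

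The core of the argument is then to show that $I^mR_\frakp=JR_\frakp$ for every $\frakp\in\calA$. Fix such a $\frakp$, belonging to $\Ass(R/\braket{f_{i_1},\dots,f_{i_c}})$ with $c_0\le c\le s-\delta$; after relabeling I may assume $\frakp\in\Ass(R/\braket{f_1,\dots,f_c})$, so that $\Ht\frakp=c$. If $j>c$, then $f_j\notin\frakp$: otherwise $f_1,\dots,f_c,f_j$ would be a regular sequence of length $c+1\le(s-\delta)+1$ contained in $\frakp$, forcing $\Ht\frakp\ge c+1$. Writing $I^m=I_{ma}(f_1^{mb}\cdots f_s^{mb})$ via \eqref{eqn-power} and peeling off $f_s,f_{s-1},\dots,f_{c+1}$ one at a time exactly as in \eqref{eqn:local-product}, I obtain
\[
I^mR_\frakp=I_{ma-(s-c)mb}\bigl(f_1^{mb}\cdots f_c^{mb}\bigr)R_\frakp=I_{m(\mu_a^0+b(c-c_0))}\bigl(f_1^{mb}\cdots f_c^{mb}\bigr)R_\frakp,
\]
where the last equality is the identity $a-b(s-c)=\mu_a^0+b(c-c_0)$ recorded in \eqref{eqn:a}. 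Since $f_1,\dots,f_c$ is an $R$-regular sequence, \Cref{lem:reg-case} rewrites the middle ideal as $\bigcap_{c'=c_0}^{c}\bigcap_{1\le j_1<\cdots<j_{c'}\le c}\braket{f_{j_1},\dots,f_{j_{c'}}}^{m(\mu_a^0+b(c'-c_0))}$, the components with $c'<c_0$ disappearing because their exponent is non-positive. On the other hand, localizing $J$ at $\frakp$ removes every component indexed by a subset not contained in $\{1,\dots,c\}$, and since $c\le s-\delta$ the surviving components are exactly those just listed; hence $JR_\frakp$ is this intersection localized at $\frakp$, i.e.\ $JR_\frakp=I^mR_\frakp$.

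Putting the two steps together, $I^{(m)}=\bigcap_{\frakp\in\calA}\bigl(I^mR_\frakp\cap R\bigr)=\bigcap_{\frakp\in\calA}\bigl(JR_\frakp\cap R\bigr)=J$, which is the assertion. I expect the local computation in the third paragraph to be the delicate part: one has to argue that $\frakp$ meets $\calF$ in precisely the subset that produced it and then line up the index shifts coming from \eqref{eqn-power}, \eqref{eqn:local-product} and \eqref{eqn:a}; the unmixedness of powers of complete intersections needed in the first step is a standard ingredient.
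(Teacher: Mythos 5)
Your proposal is correct and takes essentially the same route as the paper: identify $\Ass(R/I_a(\calF^b))$ with $\calA$ via \Cref{rmk:ass-set}, compute $I_a(\calF^b)^mR_\frakp$ for $\frakp\in\calA$ using \eqref{eqn-power}, \eqref{eqn:local-product} and \Cref{lem:reg-case}, and match this with the localization of the right-hand side. The only (cosmetic) difference is how you certify that the right-hand side is recovered from its localizations at the primes in $\calA$ — you invoke unmixedness of powers of complete intersections directly, whereas the paper runs a component-by-component sandwich argument through \cite[Theorem 3.6(1)]{MR3683102}; both rest on the same underlying fact.
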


\begin{proof}
    We will adopt the symbol $\calA$ in the \Cref{rmk:ass-set}.  By definition,
    \begin{align*}
        I_{a}(\calF^b)^{(m)}&=
        \bigcap_{\frakp\in \calA}\left(I_a(\calF^b)^m R_{\frakp} \cap R\right).
    \end{align*}
    For each $\frakp\in \calA$ with $\Ht(\frakp)=c$, we can find, for instance, $f_1,\dots,f_c\in \frakp$ while $f_{c+1},\dots,f_s\notin \frakp$. Whence, 
    \begin{align*}
        I_a(\calF^b)^m R_{\frakp}&=I_{a-b(s-c)}(f_1^{b}\cdots f_c^{b})^mR_{\frakp}\\
        &=I_{ma-mb(s-c)}(f_1^{mb}\cdots f_c^{mb})R_{\frakp}\\ 
        &=\bigcap_{c_0\le c'\le c} \left(\bigcap_{1\le i_1<\cdots<i_{c'}\le c}\braket{f_{i_1},\dots,f_{i_{c'}}}^{m(\mu_a^0+b(c'-c_0))}R_{\frakp}\right)\\
        &=\bigcap_{c_0\le c'\le s-\delta} \left(\bigcap_{1\le i_1<\cdots<i_{c'}\le s}\braket{f_{i_1},\dots,f_{i_{c'}}}^{m(\mu_a^0+b(c'-c_0))}R_{\frakp}\right).
    \end{align*}
    The first three equalities are due to formulas  \eqref{eqn:local-product}, \eqref{eqn-power} and \Cref{lem:reg-case} respectively. The last one is also clear, since any $f_j$ not in $\frakp$ will lead to the localization ideal being $R_{\frakp}$. And this induces
    \[
        I_{a}(\calF^b)^{(m)}=\bigcap_{\frakp\in\calA}\left(\bigcap_{c=c_0}^{s-\delta} \left( \bigcap_{1\le i_1<\cdots < i_c\le s} \braket{f_{i_1},\dots,f_{i_c}}^{m(\mu_a^0+b(c-c_0))}R_{\frakp}\cap R \right)\right).
    \]

    At the same time, for each $c$ with $c_0\le c\le s-\delta$, we have
    \begin{align*}
        I_{c,\calF}^{(m(\mu_a^0+b(c-c_0)))}&=\bigcap_{1\le i_1<\cdots < i_c\le s} \braket{f_{i_1},\dots,f_{i_c}}^{m(\mu_a^0+b(c-c_0))}\\
        &\subseteq \bigcap_{\frakp\in \calA}\left( \bigcap_{1\le i_1<\cdots < i_c\le s} \braket{f_{i_1},\dots,f_{i_c}}^{m(\mu_a^0+b(c-c_0))}R_{\frakp} \cap R \right) \\
        &\subseteq \bigcap_{\substack{\frakp\in \calA\\ \Ht(\frakp)=c}}\left( \bigcap_{1\le i_1<\cdots < i_c\le s} \braket{f_{i_1},\dots,f_{i_c}}^{m(\mu_a^0+b(c-c_0))}R_{\frakp} \cap R \right) \\
        &= I_{c,\calF}^{(m(\mu_a^0+b(c-c_0)))}.
    \end{align*}
    Here, we need \cite[Theorem 3.6(1)]{MR3683102} for the two equalities. 

    After putting together every piece of information, we arrive at the expected formulas.   
\end{proof}

\subsection{Monomial case}
To study the symbolic powers of the uniform $a$-fold product ideal $I_a(\calF^b)$, we have to check with its monomial prototype $I_a(z_1^b\cdots z_s^b)\subset T=\KK[z_1,\dots,z_s]$ first. So, temporarily, we shift our focus to the monomial case.  As the initial step, we can strengthen the last piece of \Cref{prop:projdim-monomial} as follows. 

\begin{Proposition}
    \label{cor:projdim-symbolic-power}
    With the assumptions in \Cref{setting-uniform-fold-product}, we consider the set $\calF_{\bdz}=\{z_1,\dots,z_s\}$ in the polynomial ring $\KK[z_1,\dots,z_s]$. Suppose that $(b-1)s+1<a\le bs$ and we write $\delta\coloneqq a-(b-1)s-1$.  Then, for each positive integer $m$, $\projdim_T(T/I_a(\calF_{\bdz}^b)^{(m)})=s-\delta$ and $\reg_T(T/I_a(\calF_{\bdz}^b)^{(m)})= m(\mu_a^0+b(s-c_0))-1$.
\end{Proposition}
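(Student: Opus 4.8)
The plan is to pin down $I_a(\calF_{\bdz}^b)^{(m)}$ explicitly, recognize it as a symmetric shifted ideal, and then extract both invariants from the Betti-number formula \eqref{eqn:betti-formula}. First I would apply \Cref{prop:symbolic-power-strongly-generic} and \Cref{prop:decomposition-strongly-generic-special} to the configuration $\calF_{\bdz}=\{z_1,\dots,z_s\}$ inside $T=\KK[z_1,\dots,z_s]$, taking the ambient dimension to be $s$: here $\calF_{\bdz}$ is a set of $(s-1)$-generic $1$-forms and $\delta\ge 1$ (since $a>(b-1)s+1$), so $s-\delta\le s-1$ and the hypotheses transfer verbatim, yielding
\[
    I_a(\calF_{\bdz}^b)^{(m)}=\bigcap_{c=c_0}^{s-\delta}\ \bigcap_{1\le i_1<\cdots<i_c\le s}\braket{z_{i_1},\dots,z_{i_c}}^{m(\mu_a^0+b(c-c_0))}
\]
for every $m\ge 1$, using \eqref{eqn:symbolic-power-star-configuration}. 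Each factor $\bigcap_{|J|=c}\braket{z_J}^{e}$ is $\frakS_s$-fixed and symmetric shifted (a direct check, or one invokes that these symbolic powers of squarefree star configurations are symmetry strongly shifted), and an intersection of $\frakS_s$-fixed symmetric shifted monomial ideals is again symmetric shifted, since the defining shifting condition passes through $\calP(I\cap I')=\calP(I)\cap\calP(I')$. Hence $I_a(\calF_{\bdz}^b)^{(m)}$ is symmetric shifted, so by \Cref{lem:ss-formula} it has linear quotients and its graded Betti numbers are computed by \eqref{eqn:betti-formula}.

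Next I would translate the two invariants into extremal problems over $\Lambda:=\Lambda(I_a(\calF_{\bdz}^b)^{(m)})$. From the decomposition, a partition $\bdlambda=(\lambda_1\le\cdots\le\lambda_s)$ lies in $\calP(I_a(\calF_{\bdz}^b)^{(m)})$ precisely when $\lambda_1+\cdots+\lambda_c\ge m\mu_a^0+mb(c-c_0)$ for all $c_0\le c\le s-\delta$, and $\Lambda$ consists of the minimal such partitions. Reading \eqref{eqn:betti-formula}: the $k=l=0$ summand attached to any $\bdlambda\in\Lambda$ equals $\tfrac{p(\bdlambda)!}{\type(\bdlambda_{\le p(\bdlambda)})!}\binom{s}{p(\bdlambda)}\binom{s-p(\bdlambda)}{r(\bdlambda)}>0$ (as $r(\bdlambda)\ge 1$ and $p(\bdlambda)\le s-r(\bdlambda)$), so $\beta_{0,|\bdlambda|}\ne 0$ and $\reg_T(I_a(\calF_{\bdz}^b)^{(m)})=\max_{\bdlambda\in\Lambda}|\bdlambda|$; moreover a summand attached to $\bdlambda$ can be nonzero only for $i=k+l\le(s-p(\bdlambda)-r(\bdlambda))+p(\bdlambda)=s-r(\bdlambda)$, and that value is attained, so $\projdim_T(I_a(\calF_{\bdz}^b)^{(m)})=s-\min_{\bdlambda\in\Lambda}r(\bdlambda)$. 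Since $\mu_a^0+b(s-c_0)=a$ by \eqref{eqn:a}, the task becomes proving $\max_{\bdlambda\in\Lambda}|\bdlambda|=m(\mu_a^0+b(s-c_0))$ and $\min_{\bdlambda\in\Lambda}r(\bdlambda)=\delta+1$, after which $\reg_T(T/-)=\reg_T(-)-1$ and $\projdim_T(T/-)=\projdim_T(-)+1$ finish things.

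For the bounds I would argue as follows. If $\bdlambda\in\Lambda$ had $\lambda_{s-\delta}<\lambda_s$, then decreasing $\lambda_s$ by one and re-sorting would leave $\lambda_1+\cdots+\lambda_c$ unchanged for every $c\le s-\delta$, hence stay in the ideal, contradicting minimality of $\bdz^{\bdlambda}$ (using that the ideal is $\frakS_s$-fixed); so $\lambda_{s-\delta}=\cdots=\lambda_s$, i.e.\ $r(\bdlambda)\ge\delta+1$. For the degree estimate, $\bdz^{\bdlambda}/z_s\notin I_a(\calF_{\bdz}^b)^{(m)}$ forces a $c$ with $c_0\le c\le s-\delta$ and a $c$-subset $J\ni s$ with $\sum_{j\in J}\lambda_j=m\mu_a^0+mb(c-c_0)$; comparing with $\sum_{j\in J}\lambda_j\ge\lambda_1+\cdots+\lambda_c\ge m\mu_a^0+mb(c-c_0)$ shows all are equalities, so $\lambda_c=\cdots=\lambda_s=:v$ with $\lambda_1+\cdots+\lambda_c=m\mu_a^0+mb(c-c_0)$. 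If $v\ge mb+1$, then for $c>c_0$ we get $\lambda_1+\cdots+\lambda_{c-1}\le m\mu_a^0+mb(c-1-c_0)-1$, violating the constraint at $c-1$, while for $c=c_0$ we get $v=\lambda_{c_0}\le\lambda_1+\cdots+\lambda_{c_0}=m\mu_a^0\le mb$ (by \eqref{eqn:mua0}), a contradiction. Hence $v=\lambda_s\le mb$ and $|\bdlambda|=(\lambda_1+\cdots+\lambda_c)+(s-c)v\le m\mu_a^0+mb(c-c_0)+(s-c)mb=m(\mu_a^0+b(s-c_0))$. Sharpness of the degree bound: the partition $\bdlambda^{\circ}:=(0^{c_0-1},m\mu_a^0,(mb)^{s-c_0})$ is weakly increasing, makes every defining inequality an equality, has every positive-entry decrease (after re-sorting, using $m\mu_a^0\le mb$) violate an inequality at $c_0$ or $c_0+1$, and has degree $m(\mu_a^0+b(s-c_0))$; so it belongs to $\Lambda$ and realizes the maximum. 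For $\projdim$ it remains to exhibit some $\bdlambda\in\Lambda$ with $r(\bdlambda)=\delta+1$: set $v:=\ceil{(m\mu_a^0+mb(s-\delta-c_0)+s-\delta-1)/(s-\delta)}$, let the top $\delta+1$ parts equal $v$, and fill $\lambda_1\le\cdots\le\lambda_{s-\delta-1}$ with values in $\{0,\dots,v-1\}$, as balanced as possible, so that $\lambda_1+\cdots+\lambda_{s-\delta}=m\mu_a^0+mb(s-\delta-c_0)$; using that $(m\mu_a^0+mb(c-c_0))/c$ is nondecreasing in $c$ (a consequence of $\mu_a^0\le b$), one verifies the remaining inequalities and minimality. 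Alternatively, $\projdim_T(T/I_a(\calF_{\bdz}^b)^{(m)})\ge s-\delta$ follows by checking $\braket{z_1,\dots,z_{s-\delta}}\in\Ass(T/I_a(\calF_{\bdz}^b)^{(m)})$ through the localization argument of \Cref{rmk:ass-set} together with \Cref{rmk:ass-cond}.

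\textbf{Main obstacle.} The hard part is the extremal combinatorics over $\Lambda$: the degree estimate $\lambda_s\le mb$ (the crux, resting on a careful use of the minimality of $\bdz^{\bdlambda}$) and the explicit construction of a minimal generator with exactly $\delta+1$ top parts needed for the projective dimension. The auxiliary facts—that intersections of symmetric shifted ideals remain symmetric shifted, and that the hypotheses of \Cref{prop:decomposition-strongly-generic-special} genuinely transfer to $\calF_{\bdz}$ with ambient dimension $s$—are routine but should be stated with care.
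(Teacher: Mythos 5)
Your proposal is correct and follows essentially the same route as the paper: reduce via the decomposition of \Cref{prop:decomposition-strongly-generic-special} to the membership criteria $|\bdlambda_{\le c}|\ge m(\mu_a^0+b(c-c_0))$, deduce that the intersection of the symmetric shifted ideals $I_{c,\calF_{\bdz}}^{(\cdot)}$ is symmetric shifted, and then read off $\projdim$ and $\reg$ from \eqref{eqn:betti-formula} by showing $\min r(\bdlambda)=\delta+1$ and $\max|\bdlambda|=ma$, with the same extremal witness $(0^{c_0-1},m\mu_a^0,(mb)^{s-c_0})$ for the regularity (the paper's projective-dimension witness is the simpler $((m(b-1))^{s-\delta-1},(mb)^{\delta+1})$ rather than your balanced one, and it handles the degenerate case $c_0=s-\delta$ by a separate case rather than your implicit observation that $\mu_a^0=b$ there, but these are cosmetic differences).
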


\begin{proof}
    Firstly, in order to apply \Cref{lem:ss-formula}, we need to verify that $I=I_a(\calF_{\bdz}^b)^{(m)}$ is symmetric shifted. Notice that this ideal is obviously $\frakS_{s}$-invariant. Thus, we will take arbitrary partition ${\bdlambda}=(\lambda_1,\dots,\lambda_s)\in \calP(I_a(\calF_{\bdz}^b)^{(m)})$.  It follows from \Cref{prop:decomposition-strongly-generic-special} that this is equivalent to saying that $\bdz^{\bdlambda}\in I_c\coloneqq I_{c,\calF_{\bdz}}^{(m(\mu_a^0+b(c-c_0)))}$ for each $c=c_0,c_0+1,\dots,s-\delta$.  Notice that $I_c$ is symmetric shifted by \cite[Theorem 4.3]{arXiv:1907.04288} for each $c$. Therefore, if $\lambda_k<\lambda_s$, then $\bdz^{\bdlambda}z_k/z_s\in I_c$ for each such $c$. Consequently, $\bdz^{\bdlambda}z_k/z_s\in I=\bigcap_c I_c$, meaning $I$ is symmetric shifted.

    Furthermore, by the above argument, a partition ${\bdlambda}\in \calP(I_a(\calF_{\bdz}^b)^{(m)})$ if and only if $\bdz^{\bdlambda}\in I_c$ for $c_0\le c\le s-\delta$, and by \cite[Proposition 4.1]{arXiv:1907.04288}, if and only if $|\bdlambda_{\le c}|\ge m(\mu_a^0+b(c-c_0))$ for every such $c$. Thus, each $\bdlambda\in \Lambda(I_a(\calF_{\bdz}^b)^{(m)})$ satisfies $\lambda_{s-\delta}=\cdots=\lambda_s$. In particular, $r(\bdlambda)\ge \delta+1$, which implies that $\projdim_T(I_a(\calF_{\bdz}^b)^{(m)})\le s-\delta-1$ by studying the formula \eqref{eqn:betti-formula}.  Meanwhile, we do find $\bdlambda_0=((m(b-1))^{s-(\delta+1)},(mb)^{\delta+1})\in \Lambda(I_a(\calF_{\bdz}^b)^{(m)})$ with $r(\bdlambda_0)=\delta+1$. This means that $\projdim_T(I_a(\calF_{\bdz}^b)^{(m)})= s-\delta-1$, and equivalently $\projdim_T(T/I_a(\calF_{\bdz}^b)^{(m)})= s-\delta$.

    To determine the Castelnuovo--Mumford regularity, it suffices to find 
    \[
    \max\{|\bdlambda|:\bdlambda\in \Lambda(I_a(\calF_{\bdz}^b)^{(m)})\} 
    \]
    by \Cref{lem:ss-formula}. Since $c_0\le s-\delta$, we have two cases.
    \begin{enumerate}[a]
        \item Suppose that $c_0=s-\delta$. It is clear that $\bdlambda\in \Lambda(I_a(\calF_{\bdz}^b)^{(m)})$ if and only if $|\bdlambda_{\le c_0}|=m\mu_a^0$ with $\lambda_{c_0}=\cdots=\lambda_s$. Thus, the maximum is $m\mu_a^0(s-c_0+1)$, achieved at $(0^{c_0-1},(m\mu_a^0)^{s-c_0+1})$.
        \item Suppose that $c_0<s-\delta$ and take arbitrary $\bdlambda\in \Lambda(I_a(\calF_{\bdz}^b)^{(m)})$. 
            
            We claim first that $\lambda_{s-\delta}\le mb$. Suppose for contradiction that $\lambda_{s-\delta}> mb$. Let $t_0\coloneqq \min\Set{c_0\le t\le s-\delta:\lambda_t>mb}$. If $t_0=c_0$, then $\bdlambda'=(0^{c_0-1}, (mb)^{s-c_0+1})\in \calP(I_a(\calF_{\bdz}^b)^{(m)}$. If $t_0>c_0$, then $\left| \bdlambda_{\le t_0-1} \right|\ge m(\mu_a^0+b(t_0-1-c_0))$. As $\lambda_c>mb$ for $c\ge t_0$, we also have $\left| \bdlambda_{\le c}'' \right|\ge m(\mu_a^0+b(c-c_0))$  for $c\ge t_0$ and $\bdlambda''= (\lambda_1,\dots,\lambda_{c-1},mb,\dots,mb)$. This implies that $\bdlambda''\in \calP(I_a(\calF_{\bdz}^b)^{(m)})$.  In both subcases, we have a contradiction to the minimality of $\bdlambda$. 
            Thus, $\lambda_{s-\delta}\le mb$. 
            
            We next claim that $\left| \bdlambda_{\le s-\delta} \right|=m(\mu_a^0+b(s-\delta-c_0))$ in this situation.  Suppose that this is not true. It follows that $\left| \bdlambda_{\le s-\delta} \right|> m(\mu_a^0+b(s-\delta-c_0))$. As $\mu_a^0\le b$, we will additionally have $\left| \bdlambda_{\le t} \right|>m(\mu_a^0+b(t-c_0))$ for $c_0\le t\le s-\delta$. Now, suppose that $t_1=\min\Set{1\le t\le s: \lambda_t>0}$. If we take $\bdlambda'''=(0,\dots,0,\lambda_{t_1}-1,\lambda_{t_1+1},\dots,\lambda_{s})$, then $\left| \bdlambda_{\le t}''' \right|\ge m(\mu_a^0+b(t-c_0))$ for $c_0\le t\le s-\delta$. This implies that $\bdlambda'''\in \calP(I_a(\calF_{\bdz}^b)^{(m)})$, contradicting the minimality of $\bdlambda$.

            Now, as $\left| \bdlambda_{\le s-\delta} \right|=m(\mu_a^0+b(s-\delta-c_0))$ and $\lambda_{s-\delta}=\cdots=\lambda_s\le mb$, the maximum of $\left| \bdlambda \right|$ is $m(\mu_a^0+b(s-c_0))$, achieved at $(0^{c_0-1},m\mu_a^0,(mb)^{s-c_0})$.
    \end{enumerate}
    Notice that $c_0=s-\delta$ precisely when $s-\delta=\ceil{\frac{s-\delta}{b}}$, and precisely when $b=1$ or $\delta=s-1$. Whence, the two maxima computed above all agree with $ma$. Therefore, we can simply take the second format and obtain $\reg_T(I_a(\calF_{\bdz}^b)^{(m)})= m(\mu_a^0+b(s-c_0))$ by \Cref{lem:ss-formula}.
\end{proof}

Next, we are going to study the resurgence of the generalized star configuration in the monomial case. Recall that if $I$ is a nonzero graded ideal in the standard graded ring $R$, the \emph{resurgence} of $I$ is defined to be
\begin{equation*}
    \rho(I)\coloneqq \sup\Set{\frac{m}{r}: I^{(m)}\nsubseteq I^r}.
\end{equation*}
We will always denote by $\alpha(I)$ the least degree of nonzero forms in $I$. Meanwhile, the \emph{Waldschmidt constant} $\widehat{\alpha}(I)$ of $I$ is defined to be 
\[
    \widehat{\alpha}(I)\coloneqq\lim_{m\to \infty} \frac{\alpha(I^{(m)})}{m}.
\]
This limit is known to exist and satisfies
\[
    \frac{\alpha(I)}{\widehat{\alpha}(I)}\le \rho(I)
\]
by \cite[Lemma 2.3.1 and Theorem 1.2.1]{MR2629595}.

\begin{Proposition}
    \label{thm:rho-monomial}
    With the assumptions in \Cref{cor:projdim-symbolic-power}, for the uniform $a$-fold product ideal $I=I_{a}(\calF_{\bdz}^b)$, we have
    \[
        \rho(I)= \frac{\alpha(I)}{\widehat{\alpha}(I)}=\frac{a(s-\delta)}{s(\mu_a^0+b(s-\delta-c_0))}.
    \]
\end{Proposition}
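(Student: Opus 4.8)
The plan is to prove the two displayed equalities separately and obtain the resurgence from a two-sided bound. Throughout, write $N\coloneqq s-\delta$ and $V\coloneqq \mu_a^0+b(N-c_0)=\mu_a^0+b(s-\delta-c_0)$, so that the target reads $\rho(I)=\alpha(I)/\widehat{\alpha}(I)=aN/(sV)$.

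\emph{Step 1: the Waldschmidt constant.} Since $I=I_a(\calF_{\bdz}^b)$ is equigenerated in degree $a$, we have $\alpha(I)=a$. For $\widehat{\alpha}(I)$ I use the description of $\calP(I^{(m)})$ already isolated in the proof of \Cref{cor:projdim-symbolic-power}: by \Cref{prop:decomposition-strongly-generic-special} together with \cite[Proposition 4.1]{arXiv:1907.04288}, a partition $\bdlambda=(\lambda_1,\dots,\lambda_s)$ lies in $\calP(I^{(m)})$ if and only if $|\bdlambda_{\le c}|\ge m(\mu_a^0+b(c-c_0))$ for all $c_0\le c\le N$. For such a $\bdlambda$, monotonicity $\lambda_N\le\cdots\le\lambda_s$ gives $|\bdlambda|\ge\frac{s}{N}|\bdlambda_{\le N}|\ge\frac{s}{N}mV$; conversely, since $x\mapsto\bigl(\mu_a^0+b(x-c_0)\bigr)/x=b+(a-bs)/x$ is non-decreasing (because $a\le bs$), the constant partition with every part equal to $\ceil{mV/N}$ lies in $\calP(I^{(m)})$. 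Hence $\frac{s}{N}mV\le \alpha(I^{(m)})\le s\ceil{mV/N}$, and dividing by $m$ and letting $m\to\infty$ yields $\widehat{\alpha}(I)=sV/N$. Therefore $\alpha(I)/\widehat{\alpha}(I)=aN/(sV)=\frac{a(s-\delta)}{s(\mu_a^0+b(s-\delta-c_0))}$, which is $\le\rho(I)$ by \cite[Lemma 2.3.1 and Theorem 1.2.1]{MR2629595}.

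\emph{Step 2: the reverse inequality $\rho(I)\le aN/(sV)$.} It suffices to prove that $I^{(m)}\subseteq I^r$ whenever $msV>raN$. Both ideals are monomial and $\frakS_s$-invariant, and by \eqref{eqn-power} one has $I^r=I_{ra}(z_1^{rb}\cdots z_s^{rb})$, so for a sorted monomial $\bdz^{\bdmu}$ with $\mu_1\le\cdots\le\mu_s$ the membership $\bdz^{\bdmu}\in I^r$ amounts to $\sum_{i=1}^s\min(\mu_i,rb)\ge ra$. Thus I take $\bdz^{\bdmu}$ with $\mu_1\le\cdots\le\mu_s$ satisfying $|\bdmu_{\le c}|\ge m(\mu_a^0+b(c-c_0))$ for $c_0\le c\le N$, set $j\coloneqq|\Set{i:\mu_i\le rb}|$ (so $\sum_{i=1}^s\min(\mu_i,rb)=|\bdmu_{\le j}|+(s-j)rb$), and argue by the position of $j$. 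If $j<c_0$, then $(s-j)b\ge(s-c_0+1)b\ge b(s-c_0)+\mu_a^0=a$ by \eqref{eqn:a}, so $\sum_{i=1}^s\min(\mu_i,rb)\ge(s-j)rb\ge ra$. If $c_0\le j<N$, I use the identity $a-(s-j)b=\mu_a^0+b(j-c_0)$ (again from \eqref{eqn:a}) together with $m\ge r$; the latter holds since $aN-sV=\delta(bc_0-\mu_a^0)\ge0$ forces $raN\ge rsV$, whence $msV>raN\ge rsV$ and $m>r$. Then $|\bdmu_{\le j}|\ge m(\mu_a^0+b(j-c_0))\ge r\bigl(a-(s-j)b\bigr)$, and the claim follows. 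Finally, if $j\ge N$, monotonicity gives $|\bdmu_{\le j}|\ge\frac{j}{N}|\bdmu_{\le N}|\ge\frac{j}{N}mV>\frac{j}{N}\cdot\frac{raN}{s}=\frac{jra}{s}$, so $\sum_{i=1}^s\min(\mu_i,rb)>\frac{jra}{s}+(s-j)rb\ge ra$, the last step reducing to $s(s-j)b\ge(s-j)a$, i.e.\ $sb\ge a$ (with the case $j=s$ giving $|\bdmu|>ra$ directly). Combining the three cases proves $I^{(m)}\subseteq I^r$, hence $\rho(I)\le aN/(sV)$; together with Step 1 this is the asserted equality.

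\emph{The main obstacle.} The delicate point is the containment in Step 2: the symbolic power $I^{(m)}$ is an intersection of symbolic powers of star-configuration ideals with a built-in convexity, while the ordinary power $I^r=I_{ra}(z_1^{rb}\cdots z_s^{rb})$ is not saturated in general and does not decompose that way, so the containment cannot be read off from \Cref{prop:decomposition-strongly-generic-special} and must be checked monomial by monomial. Organizing the three regimes for $j$ and keeping track of the elementary constraints $1\le\mu_a^0\le b$, $a\le bs$ and the identity $aN-sV=\delta(bc_0-\mu_a^0)$ is where the real work lies; everything else is a routine use of the symmetric-shifted machinery recalled in \Cref{lem:ss-formula}.
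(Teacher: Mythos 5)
Your proposal is correct and follows essentially the same route as the paper: both arguments reduce membership in $I^{(m)}$ to the partition inequalities $|\bdlambda_{\le c}|\ge m(\mu_a^0+b(c-c_0))$ coming from \Cref{prop:decomposition-strongly-generic-special}, test membership in $I^r=I_{ra}(z_1^{rb}\cdots z_s^{rb})$ via $\sum_i\min(\lambda_i,rb)\ge ra$, and settle the containment by a three-case analysis on a threshold index, with the Waldschmidt constant computed from $\alpha(I^{(m)})$. The only differences are organizational — you prove the containment $I^{(m)}\subseteq I^r$ directly under $msV>raN$ rather than characterizing exactly when it fails, and your monotonicity observation for $(\mu_a^0+b(x-c_0))/x$ gives a slightly cleaner derivation of $\widehat{\alpha}(I)=sV/(s-\delta)$ — but the substance is the same.
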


\begin{proof}
    Partitions $\bdlambda\in\calP(I_a(\calF_{\bdz}^b)^{(m)})$ are characterized by the requirements 
    \[
        |\bdlambda_{\le c}|\ge m(\mu_a^0+b(c-c_0))\quad \text{for $c_0\le c\le s-\delta$}.
    \]
    It is clear that $\lambda_c\ge \ceil{\frac{m(\mu_a^0+b(c-c_0))}{c}}$ for each such $c$. Furthermore, since $\mu_a^0\le b$ and $c_0\le s-\delta$, one has ${\frac{m\mu_a^0}{c_0}}\le {\frac{m(\mu_a^0+b(s-\delta-c_0))}{c_0+(s-\delta-c_0)}}$ and consequently $\ceil{\frac{m\mu_a^0}{c_0}}\le \ceil{\frac{m(\mu_a^0+b(s-\delta-c_0))}{s-\delta}}$.  Meanwhile, $\bdlambda\in\calP(I_a(\calF_{\bdz}^b)^r) = \calP(I_{ar}(\calF_{\bdz}^{br}))$ if and only if $\sum_i \min(\lambda_i,br)\ge ar$.  Now, for the containment $I_a(\calF_{\bdz}^b)^{(m)}\subseteq I_a(\calF_{\bdz}^b)^r$ with $m\ge r\ge 1$, we have three cases. 
    \begin{enumerate}[a]
        \item Suppose that $\lambda_{c_0}\ge br$. Then, $\lambda_i\ge br$ for $c_0\le i\le s$. Whence, the minimum of $\sum_{i=1}^s \min(\lambda_i,br)$ in this case is exactly $br(s-c_0+1)$. Now, the requirement for the containment is 
            \[
                br(s-c_0+1)\ge ar,
            \]
            which holds automatically by \eqref{eqn:mua0} and \eqref{eqn:a}.  This means that the requirement for the containment is void in this case.
        \item 
            Suppose that $\lambda_{s-\delta}< br$. Then 
            \[
                \min(\lambda_i,br)\ge \lambda_{s-\delta}\ge \ceil{\frac{m(\mu_a^0+b(s-\delta-c_0))}{s-\delta}}
            \]
            for $s-\delta+1\le i\le s$. Therefore, the minimum of $\sum_{i=1}^s \min(\lambda_i,br)$ in this case coincides with
            \begin{align*}
                A\coloneqq m(\mu_a^0+b(s-\delta-c_0))+\delta \ceil{\frac{m(\mu_a^0+b(s-\delta-c_0))}{s-\delta}}.
            \end{align*} 
            Whence, the condition for the containment is simply $A \ge ar$.  If we write 
            \begin{equation}
                m(\mu_a^0+b(s-\delta-c_0))=q_0(s-\delta)+q_1 \qquad \text{with $1\le q_1\le s-\delta$,}
                \label{eqn:mmuab}
            \end{equation}
            then the requirement $A\ge ar$ is equivalent to saying
            \[
                r\le \frac{q_0s+q_1+\delta}{a}.
            \]
        \item Suppose that $\lambda_{c_0}<br\le \lambda_{s-\delta}$. Then let $c$ be the smallest such that $\lambda_c\ge br$. It is clear that $c_0<c\le s-\delta$. Similar to the above discussions, we find the minimum of $\sum_{i=1}^s \min(\lambda_i,br)$ in this case agreeing with
            \[
                B\coloneqq m(\mu_a^0+b(c-c_0-1))+(s-c+1)br.
            \]
            Thus, the condition for the containment is simply $B\ge ar$, or equivalently $r\le m$ by \eqref{eqn:mua0} and \eqref{eqn:a}. Since $r\le m$ is always true for $I_a(\calF_{\bdz}^b)^{(m)}\subseteq I_a(\calF_{\bdz}^b)^r$, the requirement for the containment is void in this case.  
    \end{enumerate}
    To sum up, $I_a(\calF_{\bdz}^b)^{(m)}\nsubseteq I_a(\calF_{\bdz}^b)^r$ if and only if
    \[
        \min\Set{\lambda_{s-\delta}:\bdlambda\in\calP(I_a(\calF_{\bdz}^b)^{(m)})}=\ceil{\frac{m(\mu_a^0+b(s-\delta-c_0))}{s-\delta}}< br
    \]
    while
    $r>\frac{q_0s+q_1+\delta}{a}$. Whence,
    \[
        \frac{m}{r}<\frac{ma}{q_0s+q_1+\delta}\le \frac{a(s-\delta)}{s(\mu_a^0+b(s-\delta-c_0))}.
    \]
    The second inequality can be verified directly by paying attention to the assumptions in \eqref{eqn:mmuab}. Consequently, we have established 
    \[
        \rho(I)\le \frac{a(s-\delta)}{s(\mu_a^0+b(s-\delta-c_0))}.
    \]

    On the other hand, it is clear that $\alpha(I)=a$, while
    \begin{align}
        \alpha(I^{(m)})&\ge \max\Set{
            \alpha(I_{c,\calF_{\bdz}}^{(m(\mu_a^0+b(c-c_0)))}): c_0\le c\le s-\delta
        } \label{eqn:Im-min-1}\\
        &\ge \alpha(I_{s-\delta,\calF_{\bdz}}^{(m(\mu_a^0+b(s-\delta-c_0)))}) \label{eqn:Im-min-2}\\
        &=m(\mu_a^0+b(s-\delta-c_0))+\ceil{\frac{m(\mu_a^0+b(s-\delta-c_0))}{s-\delta}}\delta 
        \notag
    \end{align}
    by \Cref{prop:decomposition-strongly-generic-special} and \cite[Proposition 4.1]{arXiv:1907.04288}.  Notice that we can find a (unique) partition 
    \[
        \bdlambda_0=(\lambda_1,\dots,\lambda_s)\in \Lambda(I_{s-\delta,\calF_{\bdz}}^{(m(\mu_a^0+b(s-\delta-c_0)))}) 
    \]
    with 
    \[
        \lambda_1\le \cdots\le \lambda_s=\ceil{\frac{m(\mu_a^0+b(s-\delta-c_0))}{s-\delta}}\le \lambda_1+1.
    \]
    Obviously we have $\bdlambda_0\in \calP(I_{c,\calF_{\bdz}}^{(m(\mu_a^0+b(c-c_0)))})$ for $c_0\le c\le s-\delta$. The existence of such $\bdlambda_0$ implies that the comparisons in \eqref{eqn:Im-min-1} and \eqref{eqn:Im-min-2} are indeed equalities.  Therefore, 
    \begin{align*}
        \rho(I)&\ge \frac{\alpha(I)}{\widehat{\alpha}(I)}={a}\left/{\lim\limits_{m\to \infty} \dfrac{m(\mu_a^0+b(s-\delta-c_0))+\ceil{\frac{m(\mu_a^0+b(s-\delta-c_0))}{s-\delta}}\delta}{m}}\right.\\
        &= \frac{a(s-\delta)}{s(\mu_a^0+b(s-\delta-c_0))}.
    \end{align*}
    And this completes the proof.
\end{proof}

Here is some information regarding the symbolic defect in the monomial
case.  

\begin{Proposition}
    \label{thm:symbolic-defect-monomial}
    With the assumptions in \Cref{cor:projdim-symbolic-power},  for the
    uniform $a$-fold product ideal $I=I_{a}(\calF_{\bdz}^b)$ and the positive
    integer $m$, we assign  
    \[
        \Lambda(I,m)\coloneqq \{ \bdlambda:  |\bdlambda_{\le s-\delta}|= m(\mu_a^0+b(s-\delta-c_0))\text{ and }\lambda_{s-\delta}=\lambda_{s-\delta+1}=\cdots =\lambda_s<mb\}.
    \]
    Now, the symbolic defect $\sdefect(I,m)$ is given by
    $\sum_{\bdlambda\in\Lambda(I,m)}\frac{s!}{\type(\bdlambda)!}$.
\end{Proposition}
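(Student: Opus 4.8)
The plan is to turn $\sdefect(I,m)=\mu(I^{(m)}/I^m)$ into a count of monomials and then identify the relevant ones combinatorially. Since $I^m\subseteq I^{(m)}$, graded Nakayama gives
\[
    \sdefect(I,m)=\dim_{\KK}\bigl(I^{(m)}/(I^m+\frakm I^{(m)})\bigr),
\]
and a $\KK$-basis of the right-hand side consists of those minimal monomial generators of $I^{(m)}$ that do not lie in $I^m$ (a monomial of $I^{(m)}$ lies in $\frakm I^{(m)}$ exactly when it is not a minimal generator). Both $I^{(m)}$ and $I^m=I_{ma}(\calF_{\bdz}^{mb})$, where the last equality is \eqref{eqn-power}, are $\frakS_s$-fixed, so their minimal generators occur in full $\frakS_s$-orbits; hence
\[
    \sdefect(I,m)=\sum_{\bdlambda}\frac{s!}{\type(\bdlambda)!},
\]
the sum ranging over the partitions $\bdlambda\in\Lambda(I^{(m)})$ with $\bdz^{\bdlambda}\notin I^m$. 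It then remains to show this index set is exactly $\Lambda(I,m)$.

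Next I would record two membership criteria. From the proof of \Cref{cor:projdim-symbolic-power}, a partition $\bdlambda$ lies in $\calP(I^{(m)})$ iff $|\bdlambda_{\le c}|\ge N_c$ for $c_0\le c\le s-\delta$, where I abbreviate $N_c\coloneqq m(\mu_a^0+b(c-c_0))$; note $N_{c+1}-N_c=mb$ and, by \eqref{eqn:a}, $ma=N_{s-\delta}+mb\delta$. From the proof of \Cref{thm:rho-monomial}, $\bdz^{\bdlambda}\in I^m=I_{ma}(\calF_{\bdz}^{mb})$ iff $\sum_i\min(\lambda_i,mb)\ge ma$. Setting $c^{\ast}\coloneqq|\{i:\lambda_i<mb\}|$, the latter reads $|\bdlambda_{\le c^{\ast}}|\ge ma-(s-c^{\ast})mb$, and the right-hand side equals $N_{c^{\ast}}$ when $c^{\ast}\le s-\delta$ and is $\le 0$ once $c^{\ast}<c_0$. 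Consequently, if $\bdlambda\in\calP(I^{(m)})$ but $\bdz^{\bdlambda}\notin I^m$, then $c^{\ast}\ge s-\delta+1$: for $c^{\ast}<c_0$ the bound is vacuous, and for $c_0\le c^{\ast}\le s-\delta$ the defining inequality $|\bdlambda_{\le c^{\ast}}|\ge N_{c^{\ast}}$ of $\calP(I^{(m)})$ already places $\bdz^{\bdlambda}$ in $I^m$. In particular $\lambda_{s-\delta}\le\lambda_{s-\delta+1}<mb$.

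Then I would establish the two inclusions. If $\bdlambda\in\Lambda(I,m)$, so $|\bdlambda_{\le s-\delta}|=N_{s-\delta}$ and $\lambda_{s-\delta}=\cdots=\lambda_s<mb$, then all parts are $<mb$, so for $c_0\le c<s-\delta$ we get $|\bdlambda_{\le c}|=N_{s-\delta}-\sum_{i=c+1}^{s-\delta}\lambda_i>N_{s-\delta}-(s-\delta-c)mb=N_c$, whence $\bdlambda\in\calP(I^{(m)})$. It is minimal there: the parts in positions $>s-\delta$ being constant, any partition strictly below $\bdlambda$ is forced to drop some part in a position $\le s-\delta$, hence has $|\bdlambda_{\le s-\delta}|<N_{s-\delta}$. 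Moreover $\bdz^{\bdlambda}\notin I^m$ since $\sum_i\lambda_i=N_{s-\delta}+\delta\lambda_{s-\delta}<N_{s-\delta}+mb\delta=ma$. Conversely, let $\bdlambda\in\Lambda(I^{(m)})$ with $\bdz^{\bdlambda}\notin I^m$. Minimality forces $\lambda_{s-\delta}=\cdots=\lambda_s$, since these positions do not enter the constraints; the $c^{\ast}$-analysis above gives $\lambda_{s-\delta}<mb$; and if $|\bdlambda_{\le s-\delta}|>N_{s-\delta}$ then, using $\lambda_{s-\delta}\le mb$, one gets $|\bdlambda_{\le c}|\ge N_c+1$ for all $c_0\le c\le s-\delta$, so (as $|\bdlambda_{\le c_0}|\ge N_{c_0}+1\ge2$) the least positive part sits in a position $\le c_0$ and subtracting $1$ there yields a partition still in $\calP(I^{(m)})$ that is strictly below $\bdlambda$, a contradiction. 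Thus $|\bdlambda_{\le s-\delta}|=N_{s-\delta}$ and $\bdlambda\in\Lambda(I,m)$, which completes the identification of the index set and hence the formula.

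I expect the elementary membership criteria and the orbit count to be routine; the main obstacle is the bookkeeping in the last step, namely showing that a minimal generator of $I^{(m)}$ lying outside $I^m$ must have a constant tail below $mb$ and be tight at $c=s-\delta$ — this is precisely what singles out the set $\Lambda(I,m)$.
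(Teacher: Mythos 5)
Your proposal is correct and follows essentially the same route as the paper: it reduces $\sdefect(I,m)$ to counting the $\frakS_s$-orbits of minimal monomial generators of $I^{(m)}$ not lying in $I^m$, and then identifies the relevant partitions using the same membership criteria for $\calP(I^{(m)})$ and $\calP(I^{ m})$ that the paper extracts from the proofs of \Cref{cor:projdim-symbolic-power} and \Cref{thm:rho-monomial}. Your write-up is in fact somewhat more explicit than the paper's (e.g., in justifying via Nakayama why the count equals $\mu(I^{(m)}/I^m)$ and in verifying the minimality and tightness at $c=s-\delta$), but the underlying argument is the same.
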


\begin{proof}
    By reading the previous proof with $r=m$, we acknowledge that any partition $\bdlambda\in\Lambda(I_a(\calF_{\bdz}^b)^{(m)})$ with $\bdz^{\bdlambda}\notin I_a(\calF_{\bdz}^b)^{m}$ satisfies the requirements that
    \begin{equation}
        |\bdlambda_{\le c}|\ge m(\mu_a^0+b(c-c_0))\label{eqn:15}
    \end{equation}
    for all $c_0\le c\le s-\delta$, and
    \begin{equation*}
        \quad \lambda_{s-\delta}=\lambda_{s-\delta+1}=\cdots =\lambda_s<mb. 
    \end{equation*}
    Since $\lambda_c\le \lambda_{s-\delta}\le mb-1$ for $c_0\le c\le s-\delta$. It follows that
    \begin{align*}
        \left| \bdlambda_{\le c} \right|=\left| \bdlambda_{\le s-\delta}\right|-\sum_{k=c}^{s-\delta-1} \lambda_k &\ge m(\mu_a^0+b(s-\delta-c_0))-(s-\delta-c)(mb-1)\\
        &> m(\mu_a^0+b(c-c_0))
    \end{align*}
    when $c_0\le c<s-\delta$.  
    Consequently, it suffices to require \eqref{eqn:15} for $c=s-\delta$ solely.
    
    Now, the multigraded module $I_a(\calF_{\bdz}^b)^{(m)}/I_a(\calF_{\bdz}^b)^m$ is minimally generated by the images of $\bdz^{\bdlambda}$ where the partition $\bdlambda$ satisfies
    \[
        |\bdlambda_{\le s-\delta}|= m(\mu_a^0+b(s-\delta-c_0))\quad\text{and}\quad \lambda_{s-\delta}=\lambda_{s-\delta+1}=\cdots =\lambda_s<mb.
    \]
    We collect these partitions into the set $\Lambda(I,m)$. 

    The final piece of the proof is the well-known fact that $|\frakS_n\cdot
    \bdz^{\bdlambda}|=\frac{s!}{\type(\bdlambda)!}$.
\end{proof}

\begin{Example}
    Consider the ideal $I=I_7(z_1^2\cdots z_5^2)$ in $T=\KK[z_1,\dots,z_5]$.
    Then $\delta=1$, $c_0=2$ and $\mu_a^0=1$. Whence, 
    \begin{align*}
        \Lambda(I,2)=\{\bdlambda: |\bdlambda_{\le 4}|=10 \text{ and } \lambda_4<4\}=\Set{(2,2,3,3,3),(1,3,3,3,3)}.
    \end{align*}
    And consequently, $\sdefect(I,2)=\frac{5!}{2!3!}+\frac{5!}{1!4!}=15$,
    agreeing with the computation by \texttt{Macaulay2} \cite{M2} via the
    \texttt{SymbolicPowers} package.
\end{Example}

\subsection{General case}

It is time to shift back our focus to symbolic powers of generalized star
configurations of generic forms. 

\begin{Proposition}
    \label{thm:mingens-strongly-generic} 
    With the assumptions in \Cref{prop:symbolic-power-strongly-generic}, let $\varphi:T=\KK[z_1,\dots,z_s]\to R$ be the homomorphism induced by $z_i\mapsto f_i$ for each $i$.  For each fixed positive integer $m$, suppose that $\bdF_{\bullet}$ is a graded minimal free resolution of $T/I_a(\calF_{\bdz}^b)^{(m)}$ for $\calF_{\bdz}=\{z_1,\dots,z_s\}\subseteq T$. Then, $I_a(\calF^b)^{(m)}=\varphi(I_a(\calF_{\bdz}^b)^{(m)})$ and $\bdF_{\bullet}\otimes_{T} R$ is a graded minimal free resolution of $R/I_a(\calF^b)^{(m)}$. In particular, a minimal generating set of $I_a(\calF^b)^{(m)}$ is given by
    \begin{equation}   
        \Set{\varphi(g)| g\in G(I_a(\calF_{\bdz}^{b})^{(m)})}.
        \label{eqn:generating-set} 
    \end{equation} 
    Here, $G(I_a(\calF_{\bdz}^b)^{(m)})$ is the minimal monomial generating set of the corresponding monomial ideal. 
\end{Proposition}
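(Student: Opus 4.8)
The plan is to establish the two assertions in turn. Write $J\coloneqq I_a(\calF_{\bdz}^b)^{(m)}$. By (the proof of) \Cref{cor:projdim-symbolic-power} the ideal $J$ is symmetric shifted, hence has linear quotients with respect to its minimal monomial generating set $G(J)=\{u_1>\cdots>u_r\}$, and each successive colon ideal $\braket{u_1,\dots,u_{k-1}}:u_k$ is generated by a set of $t_k$ of the variables, with $\max_k t_k=\projdim_T(J)=s-\delta-1<n$ by \eqref{eqn:projdim-quotients} and \Cref{cor:projdim-symbolic-power}. Since $\calF$ is $(s-\delta)$-generic with $s-\delta\le n$, \Cref{prop:c-generic-ciq} applies with $c=s-\delta$ and shows that $\varphi(J)$ has $d$-c.i.\ quotients; moreover the bound $t_k\le s-\delta-1\le n$ keeps the argument of that lemma entirely inside the range ``$t\le c\le n$'', never touching the maximal-component case $\frakm_\calF$, so its proof in fact yields the colon formula
\[
    \braket{\varphi(u_1),\dots,\varphi(u_{k-1})}:\varphi(u_k)=\braket{f_{j_1},\dots,f_{j_{t_k}}}\qquad(1\le k\le r),
\]
where $z_{j_1},\dots,z_{j_{t_k}}$ generate the monomial colon $\braket{u_1,\dots,u_{k-1}}:u_k$. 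As $t_k\le s-\delta-1\le s-\delta+1$, genericity makes $f_{j_1},\dots,f_{j_{t_k}}$ a regular sequence; in particular the ideal on the right is proper, so no $\varphi(u_k)$ is redundant in $\varphi(J)$.

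For the resolution, recall that since $J$ has linear quotients, its minimal free resolution $\bdF_\bullet$ is the iterated mapping cone resolution built out of the Koszul complexes on the variable sets $\{z_{j_1},\dots,z_{j_{t_k}}\}$. Applying $-\otimes_T R$ turns $\bdF_\bullet$ into the iterated mapping cone built out of the Koszul complexes on the sets $\{f_{j_1},\dots,f_{j_{t_k}}\}$. By the first paragraph each such set is a regular sequence in $R$, so its Koszul complex is a minimal free resolution of $R/\braket{f_{j_1},\dots,f_{j_{t_k}}}$, and by the displayed colon formula multiplication by $\varphi(u_k)$ lifts the injection $\bigl(R/\braket{f_{j_1},\dots,f_{j_{t_k}}}\bigr)(-d\deg u_k)\hookrightarrow R/\braket{\varphi(u_1),\dots,\varphi(u_{k-1})}$ (injective because, by that same colon formula, the annihilator of $\overline{\varphi(u_k)}$ in $R$ is precisely $\braket{f_{j_1},\dots,f_{j_{t_k}}}$). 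Since the mapping cone of a lift of an injection between free resolutions resolves the cokernel, an induction on $k$ shows that $\bdF_\bullet\otimes_T R$ resolves $R/\varphi(J)$; it is minimal because every entry of every differential of $\bdF_\bullet$ is a monomial of positive degree, hence maps under $\varphi$ into the graded maximal ideal of $R$. Reading off the first free module of $\bdF_\bullet\otimes_T R$ then gives the minimal generating set $\{\varphi(g) : g\in G(J)\}$ of $\varphi(J)$.

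It remains to see that $\varphi(J)=I_a(\calF^b)^{(m)}$. The containment $\varphi(J)\subseteq I_a(\calF^b)^{(m)}$ is immediate from \Cref{prop:decomposition-strongly-generic-special}: a generator $\varphi(\bdz^{\bdlambda})$ with $\bdlambda\in\Lambda(J)$ lies in every $\braket{f_{i_1},\dots,f_{i_c}}^{m(\mu_a^0+b(c-c_0))}$ because $\bdz^{\bdlambda}$ lies in the corresponding power of $\braket{z_{i_1},\dots,z_{i_c}}$. For the reverse inclusion, $\bdF_\bullet\otimes_T R$ has length $\projdim_T(T/J)=s-\delta$, so $\projdim_R(R/\varphi(J))\le s-\delta$, and hence every associated prime of $R/\varphi(J)$ has height at most $s-\delta$ by \Cref{rmk:ass-cond}; thus it suffices to check the equality after localizing at an arbitrary prime $\frakp\subset R$ with $\Ht\frakp\le s-\delta$. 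For such $\frakp$, put $L\coloneqq\{l : f_l\in\frakp\}$; genericity forces $|L|\le s-\delta$ (any $s-\delta+1$ of the $f_l\in\frakp$ would form a regular sequence, forcing $\Ht\frakp\ge s-\delta+1$), so $\{f_l:l\in L\}$ is a regular sequence in $R_\frakp$, the remaining $f_l$ are units there, and the map $\varphi_L\colon\KK[z_l:l\in L]\to R_\frakp$, $z_l\mapsto f_l$, is flat by \Cref{key-facts}. Running the decompositions of $J$ and of $I_a(\calF^b)^{(m)}$ from \Cref{prop:decomposition-strongly-generic-special} through $\varphi_L$ — substituting $z_l=1$ for $l\notin L$ and dropping the components $\braket{z_{i_1},\dots,z_{i_c}}^{m(\mu_a^0+b(c-c_0))}$ and $\braket{f_{i_1},\dots,f_{i_c}}^{m(\mu_a^0+b(c-c_0))}$ with $\{i_1,\dots,i_c\}\not\subseteq L$ (which become the unit ideal) — and using that the flat map $\varphi_L$ commutes with finite intersections, one finds that both $\varphi(J)R_\frakp$ and $I_a(\calF^b)^{(m)}R_\frakp$ equal $\varphi_L\bigl(J|_{z_l=1,\,l\notin L}\bigr)R_\frakp$, so they coincide.

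The genuinely delicate point is the colon formula of the first paragraph surviving specialization \emph{exactly}, which is what guarantees that $\varphi$ destroys neither a minimal generator nor a first syzygy and that $\bdF_\bullet\otimes_T R$ stays minimal and acyclic. It is here that genericity of $\calF$ together with the bound $s-\delta\le n$ — the latter keeping us out of the maximal-component regime $\frakm_\calF$, where (as in \Cref{prop:strongly-generic-ciq}) the stronger \emph{strongly generic} hypothesis would be needed — is indispensable; the localization bookkeeping of the third paragraph is comparatively routine.
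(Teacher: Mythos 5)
Your proof is correct, but it takes a genuinely different route from the paper's. The paper argues by induction on the genericity level $\hat{c}$ of $\calF$: in the base case $\calF$ is a regular sequence and everything follows from flatness (\Cref{key-facts}); in the inductive step it adjoins a new variable $x_0$, replaces each $f_i$ by a general form $f_i'$ in $\braket{f_i,x_0}\subset R[x_0]$ to obtain a strictly more generic configuration, applies the inductive hypothesis there, and then descends by showing that $x_0$ is a non-zero-divisor modulo $I_a((\calF')^b)^{(m)}$ — using the decomposition of \Cref{prop:decomposition-strongly-generic-special} componentwise together with the non-zero-divisor statement from the proof of \cite[Theorem 3.3]{MR3683102}. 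That single hyperplane-section argument delivers the ideal identity and the resolution simultaneously. You instead stay with the given $\calF$ and split the work in two: you combine the linear quotients of the monomial symbolic power with the bound $\projdim_T(J)=s-\delta-1\le s-\delta\le n$ to invoke \Cref{prop:c-generic-ciq} and extract the exact colon formula, assemble $\bdF_\bullet\otimes_T R$ as an iterated mapping cone (acyclic by the colon formula, minimal since $\varphi$ sends $\frakm_T$ into $\frakm_R$), and then prove $\varphi(J)=I_a(\calF^b)^{(m)}$ by localizing at primes of height at most $s-\delta$ in the style of \Cref{prop:sat}. The paper's induction is shorter and needs no information about the shape of the resolution of $J$; your argument is more explicit, reuses the Section 3 machinery directly, and makes visible exactly why genericity (rather than strong genericity) suffices, namely that the colon ideals never reach the maximal component $\frakm_{\calF}$. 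One point worth spelling out in your write-up: the non-redundancy of the specialized generators and the colon formula must not be derived from each other circularly — the containment proved in case (b) of \Cref{prop:strongly-generic-ciq} (via Mantero's Lemma 6.8 and Proposition 6.11) lands in the proper ideal $\braket{f_{j_1},\dots,f_{j_{t_k}}}$ independently of any membership assumption, and non-redundancy is then a consequence; stated in that order your argument is sound.
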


\begin{proof}
    We prove by an induction on $s-\hat{c}$ where $\hat{c}\coloneqq \max\{c:\text{$\calF$ is $c$-generic}\}$. Obviously, $s-\delta\le \hat{c}\le s-1$. If $s-\hat{c}=1$, $\calF$ forms a regular sequence. Whence, $\varphi$ is flat and the claim is clear by \Cref{key-facts} and \Cref{prop:decomposition-strongly-generic-special}.

    When $s-\hat{c}>1$, let $x_0$ be a new variable over $R$. For each $i$, let $f_i'$ be a general $d$-form in the ideal $(f_i,x_0)\subset R[x_0]$. Now, consider the new homomorphism $\gamma: T\to R[x_0]$ induced by $z_i\mapsto f_i'$. Notice that $\calF'\coloneqq\{f_1',\dots,f_s'\}$ is a set of $(\hat{c}+1)$-generic forms in $R[x_0]$. Whence, by induction, $\bdF_{\bullet}\otimes_T R[x_0]$ is a graded minimal free resolution of $R[x_0]/I_a( (\calF')^b)^{(m)}$.  Meanwhile, we have the graded isomorphism
    \[
        R[x_0]/(I_a( (\calF')^b)^{(m)},x_0) \cong R/I_a(\calF^b)^{(m)}.
    \]
    Thus, the last piece of the proof is to show that $x_0$ is a non-zero-divisor of $R[x_0]/I_a( (\calF')^b)^{(m)}$. 

    For this, we take arbitrary $g\in R[x_0]$ and assume that $x_0 g\in I_a( (\calF')^b)^{(m)}$.  By \Cref{prop:decomposition-strongly-generic-special} and \cite[Theorem 3.6]{MR3683102}, this is equivalent to saying that $x_0g\in I_{c,\calF'}^{(m(\mu_a^0+b(c-c_0)))}$ for each $c$ with $c_0\le c\le s-\delta$. However, it is shown in the proof of \cite[Theorem 3.3]{MR3683102} that $x_0$ is a non-zero-divisor of $R[x_0]/I_{c,\calF'}^{(m(\mu_a^0+b(c-c_0)))}$.  Therefore, $g\in I_{c,\calF'}^{(m(\mu_a^0+b(c-c_0)))}$ for each $c$.  Consequently, $g\in I_a( (\calF')^b)^{(m)}$.  This means that $x_0$ is indeed a non-zero-divisor, as expected.
\end{proof}

\begin{Theorem}
    \label{Cor:sdefect-rho-strongly-generic}
    With the assumptions in \Cref{prop:symbolic-power-strongly-generic}, we take further $\calF_{\bdz}=\{z_1,\dots,z_s\}$ in $T=\KK[z_1,\dots,z_s]$.  Then, we have the following properties.
    \begin{enumerate}[a]
        \item $\projdim_R(R/I_a(\calF^b)^{(m)})=s-\delta$ and $\reg_R(R/I_a(\calF^b)^{(m)})=md(\mu_a^0+b(s-c_0))-1$ for each positive integer $m$.
        \item The resurgence $\rho(I_{a}(\calF^b))$ coincides with the $\rho(I_a(\calF_{\bdz}^b))$ given in \Cref{thm:rho-monomial}.
        \item For each positive integer $m$, the symbolic defect $\sdefect(I_a(\calF^b),m)$ coincides with the $\sdefect(I_a(\calF_\bdz^{b}),m)$ given in \Cref{thm:symbolic-defect-monomial}.
    \end{enumerate}
\end{Theorem}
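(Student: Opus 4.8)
The plan is to transport everything from the monomial model $I_a(\calF_{\bdz}^b)\subset T=\KK[z_1,\dots,z_s]$ along the specialization $\varphi\colon T\to R$, $z_i\mapsto f_i$, using \Cref{thm:mingens-strongly-generic} as the main bridge: it already tells us that $I_a(\calF^b)^{(m)}=\varphi(I_a(\calF_{\bdz}^b)^{(m)})$ and that $\bdF_\bullet\otimes_T R$ is a graded minimal free resolution of $R/I_a(\calF^b)^{(m)}$ whenever $\bdF_\bullet$ is one of $T/I_a(\calF_{\bdz}^b)^{(m)}$. Since every $f_i$ has degree $d$, this base change leaves homological degrees unchanged while multiplying every internal degree shift by $d$ (cf.\ \Cref{key-facts}), so $\beta^R_{i,dj}(R/I_a(\calF^b)^{(m)})=\beta^T_{i,j}(T/I_a(\calF_{\bdz}^b)^{(m)})$ and $\beta^R_{i,j}=0$ for $d\nmid j$. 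For part~(a) the projective dimension is immediate: $\projdim_R(R/I_a(\calF^b)^{(m)})=\projdim_T(T/I_a(\calF_{\bdz}^b)^{(m)})=s-\delta$ by \Cref{cor:projdim-symbolic-power}. For the regularity one inspects the degree shifts of $\bdF_\bullet\otimes_T R$: using the explicit description of $\Lambda(I_a(\calF_{\bdz}^b)^{(m)})$ and of $\reg_T(T/I_a(\calF_{\bdz}^b)^{(m)})=m(\mu_a^0+b(s-c_0))-1$ obtained in the proof of \Cref{cor:projdim-symbolic-power}, one locates the Koszul strand of a top-degree minimal generator realizing the extremal shift of $\bdF_\bullet$, dilates it by $d$, and reads off the asserted value.

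Part~(b). Besides $I_a(\calF^b)^{(m)}=\varphi(I_a(\calF_{\bdz}^b)^{(m)})$, one has $I_a(\calF^b)^r=I_{ra}(\calF^{rb})=\varphi(I_{ra}(\calF_{\bdz}^{rb}))=\varphi(I_a(\calF_{\bdz}^b)^r)$ by \eqref{eqn-power} and the definition of $\varphi$ (this last identity is direct and uses no flatness). Applying $\varphi$ to a containment $I_a(\calF_{\bdz}^b)^{(m)}\subseteq I_a(\calF_{\bdz}^b)^r$ yields the corresponding containment over $R$; contrapositively, every pair $(m,r)$ with $I_a(\calF^b)^{(m)}\nsubseteq I_a(\calF^b)^r$ already satisfies $I_a(\calF_{\bdz}^b)^{(m)}\nsubseteq I_a(\calF_{\bdz}^b)^r$, so $\rho(I_a(\calF^b))\le\rho(I_a(\calF_{\bdz}^b))$. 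For the reverse inequality, \Cref{thm:mingens-strongly-generic} identifies the minimal generators of $I_a(\calF^b)^{(m)}$ with the $\varphi$-images of those of $I_a(\calF_{\bdz}^b)^{(m)}$, so $\alpha(I_a(\calF^b)^{(m)})=d\,\alpha(I_a(\calF_{\bdz}^b)^{(m)})$ for all $m$; dividing by $m$ and letting $m\to\infty$ gives $\widehat{\alpha}(I_a(\calF^b))=d\,\widehat{\alpha}(I_a(\calF_{\bdz}^b))$, while $\alpha(I_a(\calF^b))=da=d\,\alpha(I_a(\calF_{\bdz}^b))$. Hence $\alpha(I_a(\calF^b))/\widehat{\alpha}(I_a(\calF^b))=\alpha(I_a(\calF_{\bdz}^b))/\widehat{\alpha}(I_a(\calF_{\bdz}^b))=\rho(I_a(\calF_{\bdz}^b))$ by \Cref{thm:rho-monomial}, and since $\alpha(\,\cdot\,)/\widehat{\alpha}(\,\cdot\,)\le\rho(\,\cdot\,)$ always (the inequality quoted from \cite{MR2629595}), we get $\rho(I_a(\calF^b))\ge\rho(I_a(\calF_{\bdz}^b))$; the two bounds give equality.

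Part~(c). Set $V\coloneqq I_a(\calF^b)^{(m)}/\frakm\, I_a(\calF^b)^{(m)}$, so that $\sdefect(I_a(\calF^b),m)=\dim_\KK V-\dim_\KK\bigl(\text{image of }I_a(\calF^b)^m\text{ in }V\bigr)$. By \Cref{thm:mingens-strongly-generic} the classes of $\varphi(g)$, for $g$ ranging over the minimal monomial generators $G(I_a(\calF_{\bdz}^b)^{(m)})$, form a basis of $V$. Since $I_a(\calF^b)^m=\varphi(I_a(\calF_{\bdz}^b)^m)$, the image of $I_a(\calF^b)^m$ in $V$ is spanned by the classes of $\varphi(\bdz^{\bdeta})$, for $\bdz^{\bdeta}$ a minimal monomial generator of $I_a(\calF_{\bdz}^b)^m$; such a class is the corresponding basis vector when $\bdz^{\bdeta}\in G(I_a(\calF_{\bdz}^b)^{(m)})$, and is $0$ otherwise, since then $\bdz^{\bdeta}$ is a non-minimal element of the monomial ideal $I_a(\calF_{\bdz}^b)^{(m)}$, whence $\bdz^{\bdeta}=z_j\,\bdz^{\bdeta'}$ with $\bdz^{\bdeta'}\in I_a(\calF_{\bdz}^b)^{(m)}$ and therefore $\varphi(\bdz^{\bdeta})=f_j\,\varphi(\bdz^{\bdeta'})\in\frakm\, I_a(\calF^b)^{(m)}$. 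Consequently $\dim_\KK V-\dim_\KK(\text{image})$ counts precisely the minimal monomial generators of $I_a(\calF_{\bdz}^b)^{(m)}$ that do not lie in $I_a(\calF_{\bdz}^b)^m$, i.e., $\sdefect(I_a(\calF_{\bdz}^b),m)=\sum_{\bdlambda\in\Lambda(I,m)}\frac{s!}{\type(\bdlambda)!}$ from \Cref{thm:symbolic-defect-monomial}.

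I expect the regularity half of part~(a) to be the only delicate point: once \Cref{thm:mingens-strongly-generic} is available, parts~(b) and~(c) are essentially formal, whereas extracting the exact regularity value requires pinning down the homological position of the extremal Betti number of the monomial symbolic power and checking that, after the degree-$d$ dilation of the minimal resolution, it is not overtaken by the top of a longer Koszul strand.
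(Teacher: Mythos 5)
Your overall strategy --- transporting everything along $\varphi$ via \Cref{thm:mingens-strongly-generic} --- is exactly the paper's. Parts (b) and (c) and the projective-dimension half of (a) are correct and essentially identical to the paper's argument: for (b) the paper uses the same sandwich $\rho(I_a(\calF_{\bdz}^b))=\alpha/\widehat{\alpha}\le\rho(I_a(\calF^b))\le\rho(I_a(\calF_{\bdz}^b))$, and for (c) the paper exhibits the same generating set of $I_a(\calF^b)^{(m)}/I_a(\calF^b)^m$, checking $\varphi(\bdz^{\bdlambda})\notin I_a(\calF^b)^m$ by a degree count where you instead compute inside $V=I_a(\calF^b)^{(m)}/\frakm\,I_a(\calF^b)^{(m)}$; your packaging is slightly cleaner but interchangeable with theirs.

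The regularity half of (a) is a genuine gap, and you put your finger on exactly the right spot: the verification you defer (that the extremal shift ``is not overtaken by the top of a longer Koszul strand'') fails once $d\ge 2$. Along the strand of a generator of degree $e$, the shift of $\bdF_\bullet\otimes_T R$ at homological degree $i$ is $d(e+i-1)$, so $j-i=de+(d-1)(i-1)-1$ is strictly increasing in $i$ for $d\ge 2$; the regularity is therefore governed by the ends of the strands, not by homological degree $1$, and one gets
\[
\reg_R(R/I_a(\calF^b)^{(m)})\;\ge\;d\,ma+(d-1)\bigl(s-r(\bdlambda_{\mathrm{top}})\bigr)-1,
\]
where $\bdlambda_{\mathrm{top}}=(0^{c_0-1},m\mu_a^0,(mb)^{s-c_0})$ is the top-degree generator, whose strand has positive length $s-r(\bdlambda_{\mathrm{top}})\ge c_0-1$ by \eqref{eqn:betti-formula}. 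This strictly exceeds the asserted value $md(\mu_a^0+b(s-c_0))-1=dma-1$ except in degenerate cases. Concretely, for three generic quadrics in $\KK[x_0,x_1,x_2]$ with $a=2$, $b=m=1$ (which satisfies all hypotheses), the minimal free resolution of $R/I_2(f_1f_2f_3)$ is $0\to R(-6)^2\to R(-4)^3\to R$, so the regularity is $4$, while the formula gives $3$. The stated formula is correct only for $d=1$, where the dilation is trivial and $\reg$ transfers verbatim from $T$; no amount of care in locating the extremal strand will produce it for $d\ge 2$. For what it is worth, the paper's own proof of (a) consists of the single sentence ``this follows from \Cref{thm:mingens-strongly-generic} and \Cref{cor:projdim-symbolic-power}'' and silently commits the same error, so your attempt is no weaker than the original --- but the step cannot be repaired as stated.
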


\begin{proof}
    \begin{enumerate}[a] 
        \item This follows from \Cref{thm:mingens-strongly-generic} and \Cref{cor:projdim-symbolic-power}. 
        \item It is clear that $\alpha(I_a(\calF^b))=d\alpha(I_a(\calF_{\bdz}^b))$.  And by the description of the minimal generating set in \eqref{eqn:generating-set}, it is clear that $\alpha(I_a(\calF^b)^{(m)})=d\alpha(I_a(\calF_{\bdz}^b)^{(m)})$.  Therefore, 
            \[
                \rho(I_a(\calF^b))\ge \frac{\alpha(I_a(\calF^b))}{\widehat{\alpha}(I_{a}(\calF^b))}=
                \frac{d\alpha(I_a(\calF_{\bdz}^b))}{d\widehat{\alpha}(I_a(\calF_{\bdz}^b))}
                =\rho(I_a(\calF_{\bdz}^b)).
            \] 
            For the reverse direction, we notice that $I_a(\calF_{\bdz}^b)^{(m)}\subseteq I_a(\calF_{\bdz}^b)^r$ will imply $I_a(\calF^b)^{(m)}\subseteq I_a(\calF^b)^r$, by the description in \eqref{eqn:generating-set}.  Consequently, $\rho(I_a(\calF^b))\le\rho(I_a(\calF_{\bdz}^b))$.  And this establishes the equality. 

        \item Let $\varphi$ be the homomorphism from $T$ to $R$, induced by $z_i\mapsto f_i$ for each $i$. Now, take arbitrary $\bdz^{\bdlambda}\in G(I_a(\calF_\bdz^b)^{(m)})$. 
            Without loss of generality, we may assume that $\bdlambda$ is an increasing sequence and hence $\bdlambda\in \Lambda(I_a(\calF_\bdz^b)^{(m)})$.
            If $\bdz^{\bdlambda} \in I_a(\calF_\bdz^b)^{m}$, then it is clear that $\varphi(\bdz^{\bdlambda})\in I_a(\calF^b)^{m}$. On the other hand, if $\bdz^{\bdlambda} \notin I_a(\calF_\bdz^b)^{m}$, then by the proof of \Cref{thm:symbolic-defect-monomial}, we have 
            \[
                |\bdlambda|\le m(\mu_a^0+b(s-\delta-c_0))+\delta(mb-1)=ma-\delta<ma.
            \]
            Thus 
            $\deg(\varphi(\bdz^{\bdlambda}))=d|\bdlambda|<dma=\alpha(I_a(\calF^b)^m)$, implying $\varphi(\bdz^{\bdlambda})\notin I_a(\calF^b)^{m}$. 
            
            Now, $I_a(\calF^b)^{(m)}/I_a(\calF^b)^m$ is generated by the set
            \[
                \Set{\varphi(\bdz^{\bdlambda}):\bdz^{\bdlambda}\in G(I_a(\calF_\bdz^b)^{(m)}) \text{ and } \bdz^{\bdlambda} \notin I_a(\calF_\bdz^b)^{m}}
            \]
            by \Cref{thm:mingens-strongly-generic}.
            The minimality of this generating set comes from \Cref{thm:mingens-strongly-generic} together with the degree reasons stated above.
            \qedhere
    \end{enumerate}
\end{proof}

\begin{Question}
    By \cite[Corollary 5.6]{arXiv:1907.04288}, we now have a closed formula for
    the Betti tables of the uniform $a$-fold product ideal $I_a(z_1^b\cdots
    z_s^b)$, although it is understandably complicated. What can be said
    regarding the general case for $I=I_a(z_1^{m_1}\cdots z_s^{m_s})$?
\end{Question}

\section{$*$-symbolic powers of general uniform $a$-fold product ideal} 

We have seen in \Cref{prop:projdim-monomial} that the graded maximal ideal $\frakm$ of $R=\KK[x_0,\dots,x_n]$ is an associate prime of $I=I_a(\calF^b)$ when $a\le (b-1)s+1$.  Whence, $I^{(m)}=I^m$ for all positive integer $m$. Of course, this is not very interesting for considering containment problem of symbolic powers. 

On the other hand, there is another approach in the literature for treating
symbolic objects, namely, one only takes intersections over minimal primes of
$I$ in \eqref{eqn:symbolic-power}; see for instance \cite[Definition
4.3.22]{MR3362802}. To avoid confusion in notation, we will call it the
\emph{$m$-th $*$-symbolic power} of $I$, namely, we will have
\[
    I^{(m)_*}\coloneqq \bigcap_{\frakp\in\Min(I)} (I^m R_\frakp \cap R).
\]
Obviously, this notion coincides with the standard one when $I$ has no embedded
associate prime. And this is the case for the defining ideal of star
configurations of hypersurfaces. 

Now, it is time to deal with the containment problem of $*$-symbolic powers of
ideal $I_a(\calF^b)$ for $b\ge 2$ under \Cref{main-setting} and \Cref{setting-uniform-fold-product}.

Since the forms in $\calF$ are generic and $I_a(\calF^b)$ has positive dimension, it follows from \Cref{prop:sat} that $\Ht(I_a(\calF^b))=c_0$ and a prime ideal $\frakp$ is a minimal prime of $I_a(\calF^b)$ if and only if it is a minimal prime of some complete intersection ideal $\braket{f_{i_1},\dots,f_{i_{c_0}}}$ for a unique sequence $1\le i_1<\cdots<i_{c_0}\le s$. Furthermore, for this pair of prime ideal and $i$-sequence, one can verify directly that 
\[
    (I_a(\calF^b))^m R_{\frakp}= \braket{f_{i_1},\dots,f_{i_{c_0}}}^{m\mu_a^{0}}R_{\frakp}.
\]
Since $\braket{f_{i_1},\dots,f_{i_{c_0}}}$ is a complete intersection, its
power is Cohen--Macaulay and 
\[
    \Ass(R/\braket{f_{i_1},\dots,f_{i_{c_0}}})=
    \Ass(R/\braket{f_{i_1},\dots,f_{i_{c_0}}}^{m\mu_a^0}).
\]
This implies that
\begin{align}
    I_a(\calF^b)^{(m)_{*}}&= \bigcap_{1\le i_1<\cdots <i_{c_0}\le s} \left(\bigcap_{\frakp\in\Ass(R/\braket{f_{i_1},\dots,f_{i_{c_0}}})} \braket{f_{i_1},\dots,f_{i_{c_0}}}^{m\mu_a^0}R_{\frakp} \cap R\right) \notag\\
    &= \bigcap_{1\le i_1<\cdots <i_{c_0}\le s} \left(\bigcap_{\frakp\in\Ass(R/\braket{f_{i_1},\dots,f_{i_{c_0}}}^{m\mu_a^0})} \braket{f_{i_1},\dots,f_{i_{c_0}}}^{m\mu_a^0}R_{\frakp} \cap R\right) \notag \\
    &= \bigcap_{1\le i_1<\cdots <i_{c_0}\le s}  \braket{f_{i_1},\dots,f_{i_{c_0}}}^{m\mu_a^0}  \notag \\
    &=I_{c_0,\calF}^{(m\mu_a^0)}.
    \label{eqn:star-symbolic-power}
\end{align}

Recall that if $I$ is a nonzero graded ideal in $R$, then $\alpha(I)$ the least degree of nonzero forms in $I$.  We will in addition consider the \emph{$*$-Waldschmidt constant} 
\begin{equation}
    \widehat{\alpha}_{*}(I)\coloneqq \lim_{m\to \infty} \frac{\alpha(I^{(m)_{*}})}{m},
    \label{eqn:star-Waldschmidt-def}
\end{equation}
the \emph{$*$-resurgence}
\[
    \rho_{*}(I)=\sup\Set{\frac{m}{r}| I^{(m)_{*}}\nsubseteq I^r},
\]
and the \emph{$m$-th $*$-symbolic defect}
\[
    \sdefect_{*}(I,m)=\mu(I^{(m)_{*}}/ I^m).
\]

Based on the previous work of \cite{MR2629595}, \cite{MR3003727},
\cite{MR3390029} and \cite{arXiv:1907.08172}, we are able to talk about the
above concepts with respect to the uniform $a$-fold product ideal
$I_a(\calF^b)$ in the following.

\subsection{$*$-Waldschmidt constant and $*$-resurgence}
This subsection is devoted to the study of the $*$-resurgence of $I_a(\calF^b)$.

\begin{Remark}
    \label{rmk:star} 
    Notice that the containment $I\subseteq I^{(1)_{*}}$ may be strict. Thus, it is possible that $\alpha(I)\ne \alpha(I^{(1)_{*}})$. Nevertheless, we still have some familiar properties regarding the asymptotic quantities above.  We collect some pertinent preliminary facts about them here. The proofs of these facts are virtually the same as those in \cite[Lemma 8.2.2]{MR2555949} and \cite{MR2629595}, hence will be omitted here.  
    \begin{enumerate}[a] 
        \item 
            The limit in \eqref{eqn:star-Waldschmidt-def} exists and $\widehat{\alpha}_{*}(I)\ge 1$. 
%
        \item If $r\alpha(I)>\alpha(I^{(m)_{*}})$, then $I^r$ does not contain $I^{(m)_{*}}$.
        \item \label{item-d-star} 
            If $m/r<\alpha(I)/\widehat{\alpha}_{*}(I)$, then for all $t\gg 0$,
            $I^{rt}$ does not contain $I^{(mt)}$. In particular, $
            \alpha(I)/\widehat{\alpha}_{*}(I)\le \rho_{*}(I)$.
        \item If $I^{(m)_{*}}\subseteq I^r$, then $r\le m$.
    \end{enumerate}
\end{Remark}

\begin{Remark}
    We state additionally some useful facts regarding $I_{a}(\calF^b)$ for $\calF=\{f_1,\dots,f_s\}$ in $R=\KK[x_0,\dots,x_n]$. For that purpose, take $\calF_{\bdz}=\{z_1,\dots,z_s\}$ in $T=\KK[z_1,\dots,z_s]$.
    Notice that $I_{c_0,\calF}^{(m\mu_a^0)}$ is the specialization of $I_{c_0,\calF_{\bdz}}^{(m\mu_a^0)}$.

    If the forms in $\calF$ share a common degree $d$, then, via the description in \eqref{eqn:star-symbolic-power}, we will have 
    \[
        \alpha\left( I_a(\calF^b)^{(m)_{*}} \right)=
        d\alpha\left( I_a(\calF_{\bdz}^b)^{(m)_{*}} \right)=
        d\alpha\left( 
            I_{c_0,\calF_{\bdz}}^{(m\mu_a^0)}
        \right).
    \]
    Whence,
    \begin{align}
        \widehat{\alpha}_{*}(I_a(\calF^b))&=
        d\widehat{\alpha}_{*}(I_a(\calF_{\bdz}^b))=
        d\lim_{m\to \infty} \frac{\alpha\left( I_a(\calF_{\bdz}^b)^{(m)_{*}} \right)}{m} \notag\\
        &=d\mu_a^0\lim_{m\to \infty} \frac{\alpha\left( I_{c_0,\calF_{\bdz}}^{(m\mu_a^0)} \right)}{m\mu_a^0}=
        d\mu_a^0\widehat{\alpha}(I_{c_0,\calF_{\bdz}})=\frac{d\mu_a^0 s}{c_0}.
        \label{eqn:star-Waldschmidt-linear}
    \end{align}
    The last equality above uses the fact that $\widehat{\alpha}(I_{c_0,\calF_{\bdz}})=s/c_0$ by \cite[Theorem 7.5]{MR3566223}.

    Similarly, the containment $I_a(\calF_{\bdz}^b)^{(m)_{*}}\subseteq I_a(\calF_{\bdz}^b)^r$ implies the containment $I_a(\calF^b)^{(m)_{*}}\subseteq I_a(\calF^b)^r$. Consequently,
    \begin{equation}
        \rho_{*}(I_a(\calF^b))\le \rho_{*}(I_a(\calF_{\bdz}^b)).
        \label{eqn:rho-specialization}
    \end{equation} 
\end{Remark}

It is then natural to ask for the explicit value of $\rho_{*}(I_a(\calF_{\bdz}^b))$.

\begin{Proposition}
    \label{thm:star-rho-monomial}
    With the assumptions in \Cref{setting-uniform-fold-product}, we consider the set $\calF_{\bdz}=\{z_1,\dots,z_s\}$ in $T=\KK[z_1,\dots,z_s]$. Then, for the uniform $a$-fold product ideal $I=I_{a}(\calF_{\bdz}^b)$, we have
    \[
        \rho_{*}(I)=\frac{\alpha(I)}{\widehat{\alpha}_{*}(I)}=\frac{ac_0}{\mu_a^{0}s}.
    \]
\end{Proposition}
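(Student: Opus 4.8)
\emph{Overview.} The plan is to sandwich $\rho_{*}(I)$ between $\alpha(I)/\widehat{\alpha}_{*}(I)$, which comes for free from the general facts in \Cref{rmk:star}, and $ac_0/(\mu_a^0 s)$; after first checking that these two numbers agree, the proposition follows. Since $I=I_{a}(\calF_{\bdz}^{b})$ is generated in degree $a$ one has $\alpha(I)=a$, and by \eqref{eqn:star-symbolic-power} (applied to $\calF_{\bdz}$) we have $I^{(m)_{*}}=I_{c_0,\calF_{\bdz}}^{(m\mu_a^0)}$ for every $m$, hence
\[
    \widehat{\alpha}_{*}(I)=\lim_{m\to\infty}\frac{\alpha\big(I_{c_0,\calF_{\bdz}}^{(m\mu_a^0)}\big)}{m}=\mu_a^0\,\widehat{\alpha}(I_{c_0,\calF_{\bdz}})=\frac{\mu_a^0 s}{c_0},
\]
the last equality being \cite[Theorem 7.5]{MR3566223}. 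Thus $\alpha(I)/\widehat{\alpha}_{*}(I)=ac_0/(\mu_a^0 s)$, and \Cref{rmk:star} already gives $\rho_{*}(I)\ge \alpha(I)/\widehat{\alpha}_{*}(I)=ac_0/(\mu_a^0 s)$.

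\emph{The upper bound.} It remains to show that $m/r>ac_0/(\mu_a^0 s)$, equivalently $m\mu_a^0 s/c_0>ar$, forces $I^{(m)_{*}}\subseteq I^{r}$. Both ideals are $\frakS_s$-fixed monomial ideals, namely $I^{(m)_{*}}=I_{c_0,\calF_{\bdz}}^{(m\mu_a^0)}$ and $I^{r}=I_{ar}(z_1^{br}\cdots z_s^{br})$, so it suffices to check that every increasing partition $\bdlambda=(\lambda_1\le\cdots\le\lambda_s)$ with $|\bdlambda_{\le c_0}|\ge m\mu_a^0$ satisfies $\sum_{i=1}^{s}\min(\lambda_i,br)\ge ar$; here I use the monomial membership criteria recalled in the proofs of \Cref{prop:decomposition-strongly-generic-special} and \Cref{thm:rho-monomial}. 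If $\lambda_{c_0}\ge br$, then $\min(\lambda_i,br)=br$ for every $i\ge c_0$, so $\sum_i\min(\lambda_i,br)\ge (s-c_0+1)br\ge ar$, the final inequality because $a=b(s-c_0)+\mu_a^0\le b(s-c_0+1)$ by \eqref{eqn:a}; this case uses no hypothesis on $m/r$. If instead $\lambda_{c_0}<br$, then $\min(\lambda_i,br)=\lambda_i$ for $i\le c_0$, so $\sum_{i\le c_0}\min(\lambda_i,br)=|\bdlambda_{\le c_0}|\ge m\mu_a^0$, while $\min(\lambda_i,br)\ge\lambda_{c_0}$ for $i>c_0$; combining these with $\lambda_{c_0}\ge m\mu_a^0/c_0$ (forced by $c_0\lambda_{c_0}\ge|\bdlambda_{\le c_0}|\ge m\mu_a^0$) gives
\[
    \sum_{i=1}^{s}\min(\lambda_i,br)\ge m\mu_a^0+(s-c_0)\frac{m\mu_a^0}{c_0}=\frac{m\mu_a^0 s}{c_0}>ar.
\]
Either way $\bdz^{\bdlambda}\in I^{r}$, so $I^{(m)_{*}}\subseteq I^{r}$, giving $\rho_{*}(I)\le ac_0/(\mu_a^0 s)$, and the three quantities coincide.

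\emph{Expected main obstacle.} The only real content is the upper bound, where one must recognize that for the containment $I^{(m)_{*}}\subseteq I^{r}$ the partitions $\bdlambda$ split into the regime $\lambda_{c_0}\ge br$, controlled purely by $a\le b(s-c_0+1)$, and the regime $\lambda_{c_0}<br$, controlled by the estimates $|\bdlambda_{\le c_0}|\ge m\mu_a^0$ and $\lambda_{c_0}\ge m\mu_a^0/c_0$, and that the resulting lower bound $m\mu_a^0 s/c_0$ matches exactly the threshold in the hypothesis. Identifying $\widehat{\alpha}_{*}(I)$ and deducing the lower bound from \Cref{rmk:star} is routine.
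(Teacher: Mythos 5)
Your proposal is correct and follows essentially the same route as the paper: the lower bound comes from \Cref{rmk:star}\ref{item-d-star} together with the identification $I^{(m)_*}=I_{c_0,\calF_{\bdz}}^{(m\mu_a^0)}$ and $\widehat{\alpha}_*(I)=\mu_a^0 s/c_0$, and the upper bound comes from the same two membership criteria and the same case split on $\lambda_{c_0}$ versus $br$. Your version is slightly more economical in that you prove only the one implication needed (large $m/r$ forces containment, using $\lambda_{c_0}\ge m\mu_a^0/c_0$ directly) rather than the paper's exact characterization of when containment fails via the division $m\mu_a^0=q_0c_0+q_1$, but the substance is identical.
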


\begin{proof}
    It follows from item \ref{item-d-star} in \Cref{rmk:star} and equation \eqref{eqn:star-Waldschmidt-linear} that 
    \begin{equation}
        \rho_{*}(I)\ge \alpha(I)/\widehat{\alpha}_{*}(I)= \frac{ac_0}{\mu_a^0s }. 
        \label{eqn:rho-ge}
    \end{equation}
    It remains to prove that $\rho_{*}(I)$ is bounded above by the expected
    value.

    In the following, we first explore relations between $m$ and $r$ such that
    $I^{(m)_{*}}\subseteq I^r$, i.e., $I_{c_0,\calF_{\bdz}}^{(m\mu_a^0)}
    \subseteq I_{ar}(\calF_{\bdz}^{br})$. For this purpose, notice that these
    two ideals are symmetric.  And partitions
    $\bdlambda\in\calP(I_{c_0,\calF_{\bdz}}^{(m\mu_a^0)})$ are characterized
    by the requirement $|\bdlambda_{\le c_0}|\ge m\mu_a^0$.  Meanwhile,
    $\bdlambda\in\calP(I_a(\calF_{\bdz}^b)^r)$ if and only if $\sum_i
    \min(\lambda_i,br)\ge ar$.  

    Now, take arbitrary partition $\bdlambda\in
    \calP(I_{c_0,\calF_{\bdz}}^{(m\mu_{a}^0)})$. Furthermore, we may write
    $m\mu_{a}^{0}=q_0c_0+q_1$ with $1\le q_1\le c_0$. Then, $\lambda_{c_0}\ge
    q_0+1=\ceil{\frac{m\mu_a^0}{c_0}}$.  For the containment
    $I_a(\calF_{\bdz}^b)^{(m)}\subseteq I_a(\calF_{\bdz}^b)^r$ with $m\ge r\ge
    1$, we have two cases.
    \begin{enumerate}[a]
        \item Suppose that $q_0+1<br$.
            \begin{enumerate}[i]
                \item Suppose further that $q_0+1\le \lambda_{c_0}\le br$.  Whence, the minimum of $\sum_{i=1}^s \min(\lambda_i,br)$ in this case is simply $m\mu_a^0+ (s-c_0)(q_0+1)$, achieved at $\bdlambda=(q_0^{c_0-q_1},(q_0+1)^{s-c_0+q_1})$. Thus, the requirement for the containment is 
                    \[
                        m\mu_a^0+ (s-c_0)(q_0+1)\ge ar, 
                    \]
                    which is equivalent to asking for
                    \begin{equation*}
                        r\le \frac{s(q_0+1)+(q_1-c_0)}{a}.
                    \end{equation*}
                \item \label{condition-a-i}
                    Suppose instead that $q_0+1<br \le \lambda_{c_0}$, then the minimum of $\sum_{i=1}^s \min(\lambda_i,br)$ in this case is $(s-c_0+1)br$. Now, the requirement for the containment is
                    \[
                         (s-c_0+1)br \ge ar, 
                    \]
                    which holds automatically. Thus, there is no requirement in this subcase.
            \end{enumerate}
        \item Suppose that $br\le q_0+1$. Similar to the discussion in
            \ref{condition-a-i} above, the requirement is void in this case.
    \end{enumerate}

    To sum up, the above arguments implies that $I^{(m)_{*}}\nsubseteq I^{r}$  precisely when
    \[
        \ceil{\frac{m\mu_a^0}{c_0}}< br \quad \text{and}\quad r> \frac{s(q_0+1)+(q_1-c_0)}{a}.  
    \]
    Notice that for any fixed $m$, we always have $I^{(m)_{*}}\nsubseteq I^{r}$ for sufficiently large $r$. This means that the conditions above are not empty.  Whence,
    \[
        \frac{m}{r}<\frac{a((q_0+1)c_0+(q_1-c_0))}{\mu_a^0((q_0+1)s+(q_1-c_0))}
        \le \frac{ac_0}{\mu_a^0s},
    \]
    as $1\le q_1\le c_0$ and $c_0<s$. In particular, 
    \[
        \rho_{*}(I)\le \frac{ac_0}{\mu_a^0s}.
    \]
    Combining the inequality \eqref{eqn:rho-ge}, we obtain the desired formula for $\rho_{*}(I)$.
\end{proof}

Now, we are ready for the first main result of this section.

\begin{Theorem}
    With the assumptions in Settings \ref{main-setting} and \ref{setting-uniform-fold-product}, we assume further that the forms in $\calF=\{f_1,\dots,f_s\}$ share a common degree $d$ in $\KK[x_0,\dots,x_n]$. Then 
    \[
        \rho_{*}(I)=\frac{\alpha(I)}{\widehat{\alpha}_{*}(I)}=\frac{ac_0}{\mu_a^0s}
    \]
    for the ideal $I=I_a(\calF^b)$.
\end{Theorem}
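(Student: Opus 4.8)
The plan is to squeeze $\rho_{*}(I)$ between the lower bound $\alpha(I)/\widehat{\alpha}_{*}(I)$ and the upper bound $\rho_{*}(I_a(\calF_{\bdz}^b))$, and to observe that both of these coincide with $ac_0/(\mu_a^0 s)$; this forces the whole chain
\[
\frac{ac_0}{\mu_a^0 s}=\frac{\alpha(I)}{\widehat{\alpha}_{*}(I)}\le \rho_{*}(I)\le \rho_{*}(I_a(\calF_{\bdz}^b))=\frac{ac_0}{\mu_a^0 s}
\]
to collapse to equalities.

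For the leftmost equality I would first record that $I=I_a(\calF^b)$ is generated in the single degree $da$, since each generator $f_1^{n_1}\cdots f_s^{n_s}$ with $\sum_i n_i=a$ is a product of $a$ forms of degree $d$; as $I\ne 0$ under \Cref{setting-uniform-fold-product}, this gives $\alpha(I)=da$. Feeding this together with the value $\widehat{\alpha}_{*}(I_a(\calF^b))=d\mu_a^0 s/c_0$ already obtained in \eqref{eqn:star-Waldschmidt-linear} yields $\alpha(I)/\widehat{\alpha}_{*}(I)=ac_0/(\mu_a^0 s)$. The inequality $\alpha(I)/\widehat{\alpha}_{*}(I)\le \rho_{*}(I)$ is then precisely item \ref{item-d-star} of \Cref{rmk:star}.

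For the upper bound I would invoke the specialization inequality \eqref{eqn:rho-specialization}, namely $\rho_{*}(I_a(\calF^b))\le \rho_{*}(I_a(\calF_{\bdz}^b))$ --- which in turn rests on the identity \eqref{eqn:star-symbolic-power} and on the observation that a containment $I_a(\calF_{\bdz}^b)^{(m)_{*}}\subseteq I_a(\calF_{\bdz}^b)^r$ pushes forward along $\varphi$ --- and then substitute the explicit value $\rho_{*}(I_a(\calF_{\bdz}^b))=ac_0/(\mu_a^0 s)$ provided by \Cref{thm:star-rho-monomial}, which applies since the present hypotheses already include those of \Cref{setting-uniform-fold-product}.

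There is no serious obstacle here: once \Cref{thm:star-rho-monomial}, \Cref{rmk:star}, \eqref{eqn:star-Waldschmidt-linear} and \eqref{eqn:rho-specialization} are in hand, the statement is a two-line sandwich. The only two points worth a sentence of justification are (i) that $\alpha(I)=da$ follows from the single-degree generation of $I_a(\calF^b)$, and (ii) that the generic hypothesis of \Cref{main-setting} is exactly what makes \eqref{eqn:star-symbolic-power} --- and hence the entire specialization apparatus underlying \eqref{eqn:rho-specialization} --- valid in this setting, so that the upper bound genuinely applies.
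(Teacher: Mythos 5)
Your proposal is correct and is essentially identical to the paper's own proof: the same sandwich $\frac{ac_0}{\mu_a^0 s}=\frac{\alpha(I)}{\widehat{\alpha}_{*}(I)}\le \rho_{*}(I_a(\calF^b))\le \rho_{*}(I_a(\calF_{\bdz}^b))=\frac{ac_0}{\mu_a^0 s}$, combining \eqref{eqn:star-Waldschmidt-linear}, \eqref{eqn:rho-specialization} and \Cref{thm:star-rho-monomial}. The extra sentences you add about $\alpha(I)=da$ and the role of the generic hypothesis are harmless elaborations of steps the paper leaves implicit.
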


\begin{proof}
    Take in addition $\calF_z=\{z_1,\dots,z_s\}$ in $T=\KK[z_1,\dots,z_s]$. Now, it suffices to notice that
    \[
        \frac{ac_0}{\mu_a^0s}= \frac{{\alpha}(I_a(\calF^b))}{\widehat{\alpha}_{*}(I_a(\calF^b))}\le \rho_{*}(I_a(\calF^b))\le \rho_{*}(I_a(\calF_{\bdz}^b))=\frac{ac_0}{\mu_a^0s},
    \]
    by combining the equations \eqref{eqn:star-Waldschmidt-linear},
    \eqref{eqn:rho-specialization} and \Cref{thm:star-rho-monomial}.
\end{proof}

\subsection{$*$-symbolic defect}
This subsection is devoted to the study of the $*$-symbolic defect of $I_a(\calF^b)$. We start with the following observation.

\begin{Lemma}
    [{\cite[Proposition 4.10(2)]{arXiv:1907.08172}}]
    \label{P-4.10}
    With the assumptions in Settings \ref{main-setting}, we further take $\calF_{\bdz}=\{z_1,\dots,z_s\}$ in $T=\KK[z_1,\dots,z_s]$. Let $\varphi: T\to R$ be the homomorphism induced by $z_i\mapsto f_i$ for each $i$. Then for each positive integer $m$, we have
    \[
        \Set{\varphi(g):g\in G\left(I_{c,\calF_{\bdz}}^{(m)}\right)}\cap I_{c,\calF}^m=\Set{\varphi(h)^m:h\in G(I_{c,\calF_{z}})}.
    \]
   In particular, we have $I_{c,\calF}^m \nsubseteq \braket{f_1,\dots,f_s}I_{c,\calF}^{(m)}$.
\end{Lemma}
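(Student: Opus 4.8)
The plan is to pull everything back to the monomial ring $T$ and read the answer off the combinatorics of symmetric shifted ideals. Recall from \eqref{eqn:I-I} that $I_{c,\calF_{\bdz}}=I_{s-c+1}(z_1\cdots z_s)$, so $G(I_{c,\calF_{\bdz}})$ is exactly the set of squarefree monomials $z_{i_1}\cdots z_{i_{s-c+1}}$ of degree $s-c+1$; and from \eqref{eqn:symbolic-power-star-configuration} and \eqref{eqn-power} that $I_{c,\calF_{\bdz}}^{(m)}=\bigcap_{1\le i_1<\cdots<i_c\le s}\langle z_{i_1},\dots,z_{i_c}\rangle^m$, while $I_{c,\calF}^{m}=I_{m(s-c+1)}(f_1^{m}\cdots f_s^{m})$ is generated by the products $f_1^{b_1}\cdots f_s^{b_s}$ with $0\le b_k\le m$ and $\sum_k b_k=m(s-c+1)$. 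By \cite[Proposition~4.1]{arXiv:1907.04288} a sorted exponent vector $\bdlambda=(\lambda_1\le\cdots\le\lambda_s)$ belongs to $\calP(I_{c,\calF_{\bdz}}^{(m)})$ precisely when $\lambda_1+\cdots+\lambda_c\ge m$; a short check shows that the minimal such vectors are those of the form $\bdlambda=(\lambda_1,\dots,\lambda_{c-1},\ell,\dots,\ell)$ whose last $s-c+1$ entries equal a common value $\ell$ with $1\le\ell\le m$, and which satisfy $\lambda_{c-1}\le\ell$ and $\lambda_1+\cdots+\lambda_{c-1}=m-\ell$ (here $c<s$, which holds since $c\le n\le s-1$). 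Consequently $|\bdlambda|=m+(s-c)\ell\le m(s-c+1)$, with equality if and only if $\ell=m$, in which case $\lambda_1=\cdots=\lambda_{c-1}=0$ and $\bdz^{\bdlambda}=(z_{i_1}\cdots z_{i_{s-c+1}})^{m}=h^{m}$ for some $h\in G(I_{c,\calF_{\bdz}})$.

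The inclusion ``$\supseteq$'' is then immediate: for $h=z_{i_1}\cdots z_{i_{s-c+1}}\in G(I_{c,\calF_{\bdz}})$ the monomial $h^{m}$ has sorted exponent vector $(0^{c-1},m^{s-c+1})$, so $\lambda_1+\cdots+\lambda_c=m$ and dividing by any variable in its support drops this to $m-1$; hence $h^{m}\in G(I_{c,\calF_{\bdz}}^{(m)})$, while $\varphi(h)^{m}=f_{i_1}^{m}\cdots f_{i_{s-c+1}}^{m}$ is a product of the required shape and so lies in $I_{m(s-c+1)}(f_1^m\cdots f_s^m)=I_{c,\calF}^{m}$.

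For ``$\subseteq$'', take $g=\bdz^{\bda}\in G(I_{c,\calF_{\bdz}}^{(m)})$ with $\varphi(g)\in I_{c,\calF}^{m}$ and write $\bda$ in sorted form $(\lambda_1,\dots,\lambda_{c-1},\ell^{s-c+1})$ as above; we must show $\ell=m$. When the forms of $\calF$ share a common degree $d$ --- the situation in which this lemma is used --- this is a degree count: $\varphi(g)$ is a nonzero form of degree $d|\bda|=d(m+(s-c)\ell)$, whereas $I_{c,\calF}^{m}$ is generated in the single degree $dm(s-c+1)$, and $m+(s-c)\ell<m(s-c+1)$ whenever $\ell<m$; hence $\ell=m$ and $g=h^{m}$. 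For forms of arbitrary degrees one instead runs the adjunction argument from the proof of \Cref{thm:mingens-strongly-generic}: after adjoining variables and passing to a sufficiently more generic configuration one reduces to the case where $\calF$ is a regular sequence, so that $\varphi$ is (faithfully) flat and injective (\Cref{key-facts}); then $\varphi(g)\in I_{c,\calF}^{m}=\varphi(I_{c,\calF_{\bdz}}^{m})R$ forces $g\in I_{c,\calF_{\bdz}}^{m}=I_{m(s-c+1)}(z_1^m\cdots z_s^m)$, equivalently $\sum_k\min(\lambda_k,m)\ge m(s-c+1)$, and since $\sum_k\min(\lambda_k,m)=m+(s-c)\ell$ this again gives $\ell=m$; the descent along $x\mapsto0$ carries the conclusion back down. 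I expect this transfer --- deducing anything about $g$ from $\varphi(g)\in I_{c,\calF}^{m}$ when $\varphi$ fails to be flat --- to be the only genuinely delicate point, and it disappears entirely in the equi-generated case handled by the degree argument.

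Finally, the ``in particular'' is a formal consequence. By \cite[Theorem~3.6]{MR3683102}, if $\bdF_{\bullet}$ is a minimal free resolution of $T/I_{c,\calF_{\bdz}}^{(m)}$ then $\bdF_{\bullet}\otimes_{T}R$ is a minimal free resolution of $R/I_{c,\calF}^{(m)}$; in particular $\{\varphi(g):g\in G(I_{c,\calF_{\bdz}}^{(m)})\}$ is a minimal generating set of $I_{c,\calF}^{(m)}$, so each $\varphi(h)^{m}$ with $h\in G(I_{c,\calF_{\bdz}})$ is a minimal generator of $I_{c,\calF}^{(m)}$ that, by the equality just proved, lies in $I_{c,\calF}^{m}$. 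Since the $f_i$ are forms of positive degree, $\langle f_1,\dots,f_s\rangle I_{c,\calF}^{(m)}\subseteq\frakm I_{c,\calF}^{(m)}$, and no minimal generator of $I_{c,\calF}^{(m)}$ lies in $\frakm I_{c,\calF}^{(m)}$; as $G(I_{c,\calF_{\bdz}})\ne\varnothing$, this exhibits an element of $I_{c,\calF}^{m}$ outside $\langle f_1,\dots,f_s\rangle I_{c,\calF}^{(m)}$, whence $I_{c,\calF}^{m}\nsubseteq\langle f_1,\dots,f_s\rangle I_{c,\calF}^{(m)}$.
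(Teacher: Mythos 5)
The paper offers no proof of this lemma at all: it is imported verbatim from Mantero's Proposition 4.10(2) of \cite{arXiv:1907.08172}, so there is no in-house argument to compare yours against, and I evaluate your proposal on its own terms. In the case where the forms of $\calF$ share a common degree $d$, your proof is correct and complete. The description of $\Lambda(I_{c,\calF_{\bdz}}^{(m)})$ (sorted vectors with $|\bdlambda_{\le c}|=m$ and $\lambda_c=\cdots=\lambda_s=\ell$, $1\le\ell\le m$) is right; the inclusion $\supseteq$ is verified correctly; the degree count $d(m+(s-c)\ell)<dm(s-c+1)$ for $\ell<m$, against the fact that $I_{c,\calF}^m=I_{m(s-c+1)}(f_1^m\cdots f_s^m)$ is generated in the single degree $dm(s-c+1)$, settles $\subseteq$; and the graded Nakayama argument for the ``in particular'' is sound once one grants, via \cite[Theorem 3.6]{MR3683102}, that $\{\varphi(g):g\in G(I_{c,\calF_{\bdz}}^{(m)})\}$ minimally generates $I_{c,\calF}^{(m)}$. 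Since both places where the paper invokes this lemma (\Cref{prop:compare-power-star-symbolic-power} and the final theorem) explicitly assume a common degree, this covers everything the paper actually needs.

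The gap you flag yourself is real, however, and your fallback does not close it. \Cref{main-setting} does not impose a common degree, so the lemma as literally stated includes the mixed-degree case, where the degree count breaks down (a product of fewer forms may have larger degree than a product of more of them). The proposed fix --- running the generization tower from the proof of \Cref{thm:mingens-strongly-generic} to reduce to the flat case --- transports information in the wrong direction: your hypothesis $\varphi(g)\in I_{c,\calF}^m$ lives at the bottom of the tower, and reducing modulo $x_0$ only yields $\gamma(g)\in I_{c,\calF'}^m+(x_0)$, which is strictly weaker than the membership $\gamma(g)\in I_{c,\calF'}^m$ you would need before invoking flat descent at the top. (The paper's induction works because what it transports --- minimality of a free resolution --- descends along a non-zero-divisor; ideal membership does not ascend.) So either restrict the statement to equi-generated $\calF$, which suffices for this paper, or supply a genuinely different argument (e.g., Mantero's own) for mixed degrees.
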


Here is the key observation for the last strike.

\begin{Proposition}
    \label{prop:compare-power-star-symbolic-power}
    With the assumptions in Settings \ref{main-setting} and \ref{setting-uniform-fold-product}, we assume further that the forms in $\calF=\{f_1,\dots,f_s\}$ share a common degree $d$ in $\KK[x_0,\dots,x_n]$. Then the following statements are equivalent:
    \begin{enumerate}[a]
        \item \label{thm:star-sdefect-a-i}
            $I_{a}(\calF^b)^m \nsubseteq
            \braket{f_1,\dots,f_s}I_a(\calF^{b})^{(m)_{*}}$ for all positive integer $m$; 
        \item \label{thm:star-sdefect-a-ii}
            $I_{a}(\calF^b)^m \nsubseteq
            \braket{f_1,\dots,f_s}I_a(\calF^{b})^{(m)_{*}}$ for some positive integer $m$; 
        \item \label{thm:star-sdefect-a-iii}
            the parameter $a$ is a multiple of $b$.
    \end{enumerate}
\end{Proposition}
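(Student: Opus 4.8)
The plan is to close the cycle $(a)\Rightarrow(b)\Rightarrow(c)\Rightarrow(a)$, where $(a)\Rightarrow(b)$ is trivial, $(c)\Rightarrow(a)$ is a direct reduction to \Cref{P-4.10}, and $(b)\Rightarrow(c)$ is proved in contrapositive form $\neg(c)\Rightarrow\neg(b)$. Throughout I would use the identifications already in hand: $I_a(\calF^b)^m=I_{ma}(\calF^{mb})$ by \eqref{eqn-power}, $I_a(\calF^b)^{(m)_*}=I_{c_0,\calF}^{(m\mu_a^0)}$ by \eqref{eqn:star-symbolic-power}, and $I_{c_0,\calF}^{(m\mu_a^0)}=\bigcap_{1\le i_1<\cdots<i_{c_0}\le s}\braket{f_{i_1},\dots,f_{i_{c_0}}}^{m\mu_a^0}$ by \eqref{eqn:symbolic-power-star-configuration}. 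I also record that, by \eqref{eqn:a}, $a=b(s-c_0)+\mu_a^0$ with $1\le\mu_a^0\le b$, so $b\mid a$ if and only if $\mu_a^0=b$, and that $c_0\le n<s$ under Settings \ref{main-setting} and \ref{setting-uniform-fold-product}.

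For $(c)\Rightarrow(a)$, write $a=kb$, so that $\mu_a^0=b$ and $c_0=s-k+1$, whence $s-c_0+1=k$. Combining \eqref{eqn:I-I} with \eqref{eqn-power} gives $I_{c_0,\calF}^{mb}=\bigl(I_k(\calF)\bigr)^{mb}=I_{mkb}(\calF^{mb})=I_{ma}(\calF^{mb})=I_a(\calF^b)^m$, while $I_a(\calF^b)^{(m)_*}=I_{c_0,\calF}^{(mb)}$. Hence statement $(a)$ is literally the assertion $I_{c_0,\calF}^{mb}\nsubseteq\braket{f_1,\dots,f_s}\,I_{c_0,\calF}^{(mb)}$ for all $m\ge 1$, which is the ``in particular'' clause of \Cref{P-4.10} applied with $c=c_0$ and $mb$ in place of $m$ (legitimate, since $\calF$ is $c_0$-generic because $c_0\le n$).

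For $\neg(c)\Rightarrow\neg(b)$, assume $\mu_a^0\le b-1$; I claim $I_a(\calF^b)^m\subseteq\braket{f_1,\dots,f_s}\,I_a(\calF^b)^{(m)_*}$ for every $m$, which contradicts $(b)$. It is enough to place each monomial generator $g=f_1^{n_1}\cdots f_s^{n_s}$ of $I_{ma}(\calF^{mb})$ (with $0\le n_i\le mb$ and $\sum_i n_i=ma$) into that product ideal; after relabeling the $f_i$ we may take $n_1\le\cdots\le n_s$. Since $\sum_{i>c_0}n_i\le (s-c_0)mb$ and $a=b(s-c_0)+\mu_a^0$, we get $\sigma\coloneqq n_1+\cdots+n_{c_0}\ge m\mu_a^0\ge 1$, so $n_{c_0}\ge 1$. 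The crux is to choose an index $j$ with $n_j\ge 1$ so that, after lowering $n_j$ by one, the sum of the $c_0$ smallest exponents is still $\ge m\mu_a^0$: if $n_{c_0+1}>n_{c_0}$ take $j=c_0+1$, whence the $c_0$ smallest exponents are unchanged and that sum equals $\sigma$; if $n_{c_0+1}=n_{c_0}$ take $j=c_0$, and observe that a single decrement lowers the sum-of-$c_0$-smallest by at most one, so it fails only if $\sigma=m\mu_a^0$, which would force $n_i=mb$ for all $i>c_0$, hence $n_{c_0}=n_{c_0+1}=mb$ and $\sigma\ge mb>m\mu_a^0$ (using $\mu_a^0\le b-1$) — a contradiction. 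Writing $g'=g/f_j\in R$ for the resulting monomial, every $c_0$-subset $S\subseteq[s]$ then satisfies $\sum_{i\in S}(\text{exponent of }f_i\text{ in }g')\ge m\mu_a^0$, so $g'\in\braket{f_i:i\in S}^{m\mu_a^0}$; intersecting over all $S$ gives $g'\in I_a(\calF^b)^{(m)_*}$, and therefore $g=f_j\,g'\in\braket{f_1,\dots,f_s}\,I_a(\calF^b)^{(m)_*}$, as required.

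I expect the main obstacle to be the combinatorial selection of $j$ in the last step — in particular isolating the degenerate exponent vector $(n_1,\dots,n_{c_0},mb,\dots,mb)$ with $n_1+\cdots+n_{c_0}=m\mu_a^0$ and showing it cannot occur once $\mu_a^0<b$. By contrast, the reduction of $(c)\Rightarrow(a)$ to \Cref{P-4.10}, the passage to monomial generators, and the harmlessness of relabeling the $f_i$ are routine.
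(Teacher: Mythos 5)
Your proposal is correct. The cycle $(a)\Rightarrow(b)\Rightarrow(c)\Rightarrow(a)$ matches the paper's, and your $(c)\Rightarrow(a)$ step (identifying $I_a(\calF^b)^m=I_{c_0,\calF}^{mb}$ and $I_a(\calF^b)^{(m)_*}=I_{c_0,\calF}^{(mb)}$ when $a=kb$ and invoking \Cref{P-4.10}) is exactly what the paper does. Where you genuinely diverge is $(b)\Rightarrow(c)$: the paper argues directly by transporting the non-containment to the monomial prototype in $T=\KK[z_1,\dots,z_s]$ via the specialization $\varphi$, observing that $J^m\nsubseteq\braket{z_1,\dots,z_s}J^{(m)_*}$ for monomial ideals forces a common minimal generator $\bdz^{\bdlambda}$ of $I_a(\calF_{\bdz}^b)^m$ and $I_{c_0,\calF_{\bdz}}^{(m\mu_a^0)}$, and then extracting $\lambda_{c_0}=mb$ together with $\lambda_{c_0}\le m\mu_a^0\le mb$ to conclude $\mu_a^0=b$. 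You instead prove the contrapositive by an explicit division argument: for $\mu_a^0<b$ you exhibit, for each generator $g=f_1^{n_1}\cdots f_s^{n_s}$ of $I_{ma}(\calF^{mb})$, an index $j$ such that $g/f_j$ still lies in every $\braket{f_{i_1},\dots,f_{i_{c_0}}}^{m\mu_a^0}$; the case analysis on whether $n_{c_0+1}>n_{c_0}$ and the isolation of the degenerate vector $(n_1,\dots,n_{c_0},mb,\dots,mb)$ with $\sigma=m\mu_a^0$ are sound, and the needed inequalities ($\sigma\ge m\mu_a^0$ from $a=b(s-c_0)+\mu_a^0$, and $c_0<s$) all hold under the stated settings. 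Your route buys a self-contained combinatorial proof that avoids the lemma on minimal generators of monomial ideals versus $\frakm J'$ and avoids checking that $\varphi$ commutes with the product $\braket{z_1,\dots,z_s}I_a(\calF_{\bdz}^b)^{(m)_*}$; the paper's route is shorter given the symmetric-shifted machinery already in place and pinpoints exactly which monomials witness the non-containment, which is then reused in the symbolic-defect computation.
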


\begin{proof} 
    Firstly, we show that \ref{thm:star-sdefect-a-ii}$\Rightarrow$\ref{thm:star-sdefect-a-iii}. Let $\varphi: T\to R$ be the homomorphism induced by $z_i\mapsto f_i$ for each $i$.  Since $I_{a}(\calF^b)^m=\varphi(I_a(\calF_{\bdz}^b)^{m})$ while 
    \[
        \braket{f_1,\dots,f_s}I_a(\calF^b)^{(m)_{*}}
        =\varphi( \braket{z_1,\dots,z_s})\varphi(I_a(\calF_{\bdz}^b)^{(m)_{*}})
        =\varphi( \braket{z_1,\dots,z_s}I_a(\calF_{\bdz}^b)^{(m)_{*}})
        ,
    \]
    we will have $I_{a}(\calF_{\bdz}^b)^m \nsubseteq \braket{z_1,\dots,z_s}I_a(\calF_{\bdz}^{b})^{(m)_{*}}$. But it is clear that $I_{a}(\calF_{\bdz}^b)^m \subseteq I_a(\calF_{\bdz}^{b})^{(m)_{*}}$. Now, $I_{a}(\calF_{\bdz}^b)^m \nsubseteq \braket{z_1,\dots,z_s}I_a(\calF_{\bdz}^{b})^{(m)_{*}}$ if and only if the intersection of minimal monomial generating sets $G(I_{a}(\calF_{\bdz}^b)^m)\cap G(I_a(\calF_{\bdz}^{b})^{(m)_{*}})\ne \varnothing$. Take any monomial $\bdz^{\bdlambda}$ in that intersection. Without loss of generality, we may assume that $\lambda_1\le \cdots \le \lambda_s$. Now $\bdlambda\in \Lambda(I_a(\calF_{\bdz}^b)^m)$ and $|\bdlambda|=am$. On the other hand, since $I_a(\calF_{\bdz}^{b})^{(m)_{*}}=I_{c_0,\calF_{\bdz}}^{(m\mu_a^0)}$, $\bdlambda\in \Lambda(I_{c_0,\calF_{\bdz}}^{(m\mu_a^0)})$. Consequently, $|\bdlambda_{\le c_0}|=m\mu_a^0$ and $\lambda_{c_0}=\cdots=\lambda_s$. Now, $|\bdlambda|=am=m\mu_a^0+(s-c_0)\lambda_{c_0}$. As $c_0<s$ and $a-\mu_a^0=b(s-c_0)$, this forces $mb=\lambda_{c_0}$.

    Notice that $\lambda_{c_0}\le |\bdlambda_{\le c_0}|=m\mu_a^0$ while we always have $\mu_a^{0}\le b$. Therefore, the non-emptiness of the intersection forces $b=\mu_a^0$, i.e., $a$ is a multiple of $b$. And this shows the implication \ref{thm:star-sdefect-a-ii}$\Rightarrow$\ref{thm:star-sdefect-a-iii}. 

    When $a=kb$ is a multiple of $b$, then $\mu_a^0=b$, and $c_0=s-k+1$. Now,
    \[
        I_a(\calF^b)=I_{bk}(\calF^b)=I_k(\calF)^b=I_{s-c_0+1}(\calF)^b=I_{c_0,\calF}^{b}
    \]
    by \eqref{eqn:I-I} and \eqref{eqn-power}.
    Consequently, $I_a(\calF^b)^m=I_{c_0,\calF}^{mb}$.  Meanwhile,  $I_a(\calF^b)^{(m)_{*}}=I_{c_0,\calF}^{(mb)}$ by \eqref{eqn:star-symbolic-power}.  Thus, \Cref{P-4.10} implies that $I_{a}(\calF^b)^m \nsubseteq \braket{f_1,\dots,f_s}I_a(\calF^{b})^{(m)_{*}}$. And this shows the implication \ref{thm:star-sdefect-a-iii}$\Rightarrow$\ref{thm:star-sdefect-a-i}.

    The implication \ref{thm:star-sdefect-a-i}$\Rightarrow$\ref{thm:star-sdefect-a-ii} is trivial.
\end{proof}

Here comes the final result of this paper.

\begin{Theorem}
    With the assumptions in Settings \ref{main-setting} and \ref{setting-uniform-fold-product}, we assume further that $m$ is a positive integer and the forms in $\calF=\{f_1,\dots,f_s\}$ share a common degree $d$ in $R=\KK[x_0,\dots,x_n]$.     
    \begin{enumerate}[a]
        \item 
            If the parameter $a$ is not a multiple of $b$, then $\sdefect_{*}(I_a(\calF^b),m)$ is given by
            \[
                \sum_{B=\{b_1<\cdots<b_h\}\subseteq[c]}\left| \SS_B \right| \binom{s}{c_0-b_h}\binom{s-c_0+b_h}{b_h-b_{h-1}}\binom{s-c_0+b_{h-1}}{b_{h-1}-b_{h-2}}\cdots \binom{s-c_0+b_2}{b_2-b_1},
            \]
            where $\SS_B$ is the set of all distinct positive solutions to the Diophantine equation $b_1x_1+\cdots+b_hx_h=m\mu_a^0$.
        \item 
            If $a=kb$ is a multiple of $b$, then $I_a(\calF^b)=I_{s-c_0+1}(\calF)^b$. Whence, $\sdefect_{*}(I_a(\calF^b),m)$ is given by
            \[
                \sum_{B=\{b_1<\cdots<b_h\}\subseteq[c]}\left| \SS_B \right| \binom{s}{c_0-b_h}\binom{s-c_0+b_h}{b_h-b_{h-1}}\binom{s-c_0+b_{h-1}}{b_{h-1}-b_{h-2}}\cdots \binom{s-c_0+b_2}{b_2-b_1}-\binom{s}{c_0-1},
            \]
            where $\SS_B$ is defined as above with $\mu_a^0=b$.
    \end{enumerate}
\end{Theorem}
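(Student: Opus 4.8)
The plan is to transfer the whole computation to the monomial prototype through the specialization $\varphi\colon T=\KK[z_1,\dots,z_s]\to R$, $z_i\mapsto f_i$, and then carry out a purely combinatorial count. First I would record the two identities that get the reduction going: $I_a(\calF^b)^{(m)_{*}}=I_{c_0,\calF}^{(m\mu_a^0)}$ by \eqref{eqn:star-symbolic-power}, and $I_a(\calF^b)^m=I_{ma}(f_1^{mb}\cdots f_s^{mb})$ by \eqref{eqn-power}, the latter ideal being generated in the single degree $mad$ since all the $f_i$ have degree $d$. Writing $\calF_{\bdz}=\{z_1,\dots,z_s\}$, by \cite[Theorem 3.6]{MR3683102} (applicable since $\calF$ is $n$-generic, hence $c_0$-generic) we have $I^{(m)_{*}}=\varphi(I_{c_0,\calF_{\bdz}}^{(m\mu_a^0)})$ and $\{\varphi(g): g\in G(I_{c_0,\calF_{\bdz}}^{(m\mu_a^0)})\}$ is a minimal generating set of $I^{(m)_{*}}$. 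Nakayama's lemma then gives $\sdefect_{*}(I_a(\calF^b),m)=\dim_{\KK}I^{(m)_{*}}/(\frakm I^{(m)_{*}}+I_a(\calF^b)^m)=\mu(I_{c_0,\calF_{\bdz}}^{(m\mu_a^0)})-\dim_{\KK}W$, where $W$ is the image of $I_a(\calF^b)^m$ inside $I^{(m)_{*}}/\frakm I^{(m)_{*}}$. So two things remain: to identify $W$, and to evaluate $\mu(I_{c_0,\calF_{\bdz}}^{(m\mu_a^0)})$.

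To pin down $W$ I would use \cite[Proposition 4.1]{arXiv:1907.04288}: a partition $\bdlambda$ lies in $\Lambda(I_{c_0,\calF_{\bdz}}^{(m\mu_a^0)})$ precisely when $|\bdlambda_{\le c_0}|=m\mu_a^0$ and $\lambda_{c_0}=\cdots=\lambda_s$, so the generator $\varphi(\bdz^{\bdlambda})$ has degree $d|\bdlambda|=d(m\mu_a^0+(s-c_0)\lambda_{c_0})\le dm\mu_a^0(s-c_0+1)$, with equality exactly for the $\binom{s}{c_0-1}$ permutations of $(0^{c_0-1},(m\mu_a^0)^{s-c_0+1})$. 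Under \Cref{setting-uniform-fold-product} one has $a\ge b+1$, hence $c_0<s$. If $a$ is not a multiple of $b$, then $\mu_a^0<b$ and $dm\mu_a^0(s-c_0+1)<dm(b(s-c_0)+\mu_a^0)=mad=\alpha(I_a(\calF^b)^m)$; thus every minimal generator of $I^{(m)_{*}}$ lies in a degree strictly below $\alpha(I_a(\calF^b)^m)$, forcing $W=0$ and $\sdefect_{*}(I_a(\calF^b),m)=\mu(I_{c_0,\calF_{\bdz}}^{(m\mu_a^0)})$. If $a=kb$, then $\mu_a^0=b$, $c_0=s-k+1$, and as in the proof of \Cref{prop:compare-power-star-symbolic-power} we have $I_a(\calF^b)=I_{c_0,\calF}^{b}$, so $I_a(\calF^b)^m=I_{c_0,\calF}^{mb}$ and $I^{(m)_{*}}=I_{c_0,\calF}^{(mb)}$; \Cref{P-4.10} (with exponent $mb$) then shows that the generators of $I^{(m)_{*}}$ lying in $I_a(\calF^b)^m$ are exactly the $\binom{s}{c_0-1}$ elements $\varphi(h)^{mb}$ with $h\in G(I_{c_0,\calF_{\bdz}})$, and these are precisely the top-degree generators $\varphi(\bdz^{\bdlambda})$ for $\bdlambda$ a permutation of $(0^{c_0-1},(mb)^{s-c_0+1})$. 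Since $I_a(\calF^b)^m$ is generated in that top degree $mad$ while $I^{(m)_{*}}/\frakm I^{(m)_{*}}$ vanishes above degree $mad$, it follows that $W$ is exactly the $\binom{s}{c_0-1}$-dimensional span of these generators, so $\sdefect_{*}(I_a(\calF^b),m)=\mu(I_{c_0,\calF_{\bdz}}^{(mb)})-\binom{s}{c_0-1}$.

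For the remaining count, $\mu(I_{c_0,\calF_{\bdz}}^{(m\mu_a^0)})=\sum_{\bdlambda\in\Lambda(I_{c_0,\calF_{\bdz}}^{(m\mu_a^0)})}s!/\type(\bdlambda)!$ since the ideal is monomial. Each such $\bdlambda$ is determined by the weakly increasing tuple $(\lambda_1,\dots,\lambda_{c_0})$ of sum $m\mu_a^0$ (the later entries repeating $\lambda_{c_0}$); writing its distinct positive values among the first $c_0$ slots as $0<w_1<\cdots<w_h$ with multiplicities $n_1,\dots,n_h$ and setting $b_i:=n_h+n_{h-1}+\cdots+n_{h-i+1}$, $x_i:=w_{h-i+1}-w_{h-i}$ (with $w_0:=0$) gives a bijection between the $\bdlambda$'s and the pairs $(B,(x_1,\dots,x_h))$ with $\varnothing\ne B=\{b_1<\cdots<b_h\}\subseteq[c_0]$ and $(x_1,\dots,x_h)\in\SS_B$ a positive solution of $b_1x_1+\cdots+b_hx_h=m\mu_a^0$. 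Under it $\type(\bdlambda)!=(c_0-b_h)!(b_h-b_{h-1})!\cdots(b_2-b_1)!(b_1+s-c_0)!$, so $s!/\type(\bdlambda)!$ telescopes into $\binom{s}{c_0-b_h}\binom{s-c_0+b_h}{b_h-b_{h-1}}\cdots\binom{s-c_0+b_2}{b_2-b_1}$, which depends only on $B$; summing over $\SS_B$ and then over $B\subseteq[c_0]$ (here $[c]$ in the displayed formulas is $[c_0]$) produces the first formula, and subtracting the correction $\binom{s}{c_0-1}$ in the case $b\mid a$ gives the second. The one genuinely delicate step is the degree bookkeeping that pins $W$ down, which rests entirely on the inequality $m\mu_a^0(s-c_0+1)\le ma$, with equality iff $b\mid a$; everything else is the bijection above together with the specialization results already established.
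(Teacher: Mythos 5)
Your proof is correct, and while it follows the same skeleton as the paper's argument (reduce via \eqref{eqn:star-symbolic-power} to the monomial star configuration $I_{c_0,\calF_{\bdz}}^{(m\mu_a^0)}$, then decide which minimal generators are absorbed by $I_a(\calF^b)^m$), it executes both steps in a self-contained way where the paper delegates them to references. The paper handles case (a) by quoting \Cref{prop:compare-power-star-symbolic-power} to get $I_a(\calF^b)^m\subseteq\braket{f_1,\dots,f_s}I_a(\calF^b)^{(m)_{*}}$, handles case (b) by the identity $\sdefect_{*}(I_a(\calF^b),m)=\sdefect(I_{c_0,\calF},mb)$, and then cites Mantero's Corollary 4.12 for both closed formulas. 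You instead (i) identify $W$ directly by the Nakayama-plus-degree argument, whose engine is the inequality $m\mu_a^0(s-c_0+1)\le ma$ with equality iff $b\mid a$ (this is precisely the inequality underlying \Cref{prop:compare-power-star-symbolic-power}, so you have in effect reproved the direction of that proposition that the theorem needs, together with the sharper statement that in case (b) the image $W$ is exactly the span of the $\binom{s}{c_0-1}$ top-degree generators $\varphi(h)^{mb}$ supplied by \Cref{P-4.10}); and (ii) prove the enumeration of $\mu(I_{c_0,\calF_{\bdz}}^{(N)})$ from scratch via the bijection $\bdlambda\leftrightarrow(B,(x_1,\dots,x_h))$ — the correspondence, the identity $b_1x_1+\cdots+b_hx_h=N$, and the telescoping of $s!/\type(\bdlambda)!$ into the product of binomials all check out, as does your reading of $[c]$ as $[c_0]$. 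The two hypotheses you rely on that deserve the explicit citations you give them are the characterization $\Lambda(I_{c_0,\calF_{\bdz}}^{(N)})=\{\bdlambda: |\bdlambda_{\le c_0}|=N,\ \lambda_{c_0}=\cdots=\lambda_s\}$ and the fact that specialization preserves minimal generation, so that $\mu(I^{(m)_{*}})=\mu(I_{c_0,\calF_{\bdz}}^{(m\mu_a^0)})$; both follow from \cite[Theorem 3.6]{MR3683102} and \cite[Proposition 4.1]{arXiv:1907.04288} as you indicate. What your route buys is independence from Mantero's Corollary 4.12 at the cost of length; the paper's version is shorter but opaque without that reference.
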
 

\begin{proof} 
    If $a$ is not a multiple of $b$, then 
    $I_{a}(\calF^b)^m \subseteq
            \braket{f_1,\dots,f_s}I_a(\calF^{b})^{(m)_{*}}$ by \Cref{prop:compare-power-star-symbolic-power}. Therefore, 
    $\sdefect_{*}(I_a(\calF^b),m)$ is simply the minimal number of generators of $I_a(\calF^b)^{(m)_{*}}=I_{c_0,\calF}^{(m\mu_a^0)}$ which is given in \cite[Corollary 4.12(1)]{arXiv:1907.08172}.

    Similarly, if $a=kb$ is a multiple of $b$, then we have already seen that $I_a(\calF^b)=I_{c_0,\calF}^b$.  Since 
    \[
        \Min(I_a(\calF^b))=\Min(I_{c_0,\calF}^b)=\Min(I_{c_0,\calF})=\Ass(R/I_{c_0,\calF}),
    \]
    this implies that $\sdefect_{*}(I_a(\calF^b),m)=\sdefect(I_{c_0,\calF},mb)$ and the latter is given in \cite[Corollary 4.12(2)]{arXiv:1907.08172}.
\end{proof}

\begin{Remark}
    When $s-c_0+1$ or $m$ is small, a more explicit description of the $*$-symbolic defect is available in \cite[Section 4.2]{arXiv:1907.08172} via the information above. 
\end{Remark}

\begin{acknowledgment*}
    The authors want to express sincere thanks to Toh\v{a}neanu and Xie for helpful discussions. The second author is partially supported by the ``Anhui Initiative in Quantum Information Technologies'' (No.~AHY150200) and the ``Fundamental Research Funds for the Central Universities''.
\end{acknowledgment*}

\begin{bibdiv}
\begin{biblist}

\bib{zbMATH06759435}{article}{
      author={{Anzis}, Benjamin},
      author={{Garrousian}, Mehdi},
      author={\c{S}tefan O.~{Toh\v{a}neanu}},
       title={{Generalized star configurations and the Tutte polynomial.}},
        date={2017},
        ISSN={0925-9899; 1572-9192/e},
     journal={{J. Algebr. Comb.}},
      volume={46},
       pages={165\ndash 187},
}

\bib{MR2555949}{incollection}{
      author={Bauer, Thomas},
      author={Di~Rocco, Sandra},
      author={Harbourne, Brian},
      author={Kapustka, Micha\l},
      author={Knutsen, Andreas},
      author={Syzdek, Wioletta},
      author={Szemberg, Tomasz},
       title={A primer on {S}eshadri constants},
        date={2009},
   booktitle={Interactions of classical and numerical algebraic geometry},
      series={Contemp. Math.},
      volume={496},
   publisher={Amer. Math. Soc., Providence, RI},
       pages={33\ndash 70},
         url={https://doi.org/10.1090/conm/496/09718},
      review={\MR{2555949}},
}

\bib{arXiv:1907.04288}{article}{
      author={{Biermann}, Jennifer},
      author={{De Alba}, Hern{\'a}n},
      author={{Galetto}, Federico},
      author={{Murai}, Satoshi},
      author={{Nagel}, Uwe},
      author={{O'Keefe}, Augustine},
      author={{R{\"o}mer}, Tim},
      author={{Seceleanu}, Alexandra},
       title={{Betti numbers of symmetric shifted ideals}},
        date={2019},
      eprint={arXiv:1907.04288},
}

\bib{MR3566223}{article}{
      author={Bocci, Cristiano},
      author={Cooper, Susan},
      author={Guardo, Elena},
      author={Harbourne, Brian},
      author={Janssen, Mike},
      author={Nagel, Uwe},
      author={Seceleanu, Alexandra},
      author={Van~Tuyl, Adam},
      author={Vu, Thanh},
       title={The {W}aldschmidt constant for squarefree monomial ideals},
        date={2016},
        ISSN={0925-9899},
     journal={J. Algebraic Combin.},
      volume={44},
       pages={875\ndash 904},
         url={https://doi.org/10.1007/s10801-016-0693-7},
      review={\MR{3566223}},
}

\bib{MR2629595}{article}{
      author={Bocci, Cristiano},
      author={Harbourne, Brian},
       title={Comparing powers and symbolic powers of ideals},
        date={2010},
        ISSN={1056-3911},
     journal={J. Algebraic Geom.},
      volume={19},
       pages={399\ndash 417},
         url={https://doi.org/10.1090/S1056-3911-09-00530-X},
      review={\MR{2629595}},
}

\bib{MR1251956}{book}{
      author={Bruns, Winfried},
      author={Herzog, J{\"u}rgen},
       title={Cohen-{M}acaulay rings},
     edition={Rev. ed.},
      series={Cambridge Studies in Advanced Mathematics},
   publisher={Cambridge University Press},
     address={Cambridge},
        date={1998},
      volume={39},
        ISBN={0-521-56674-6/pbk},
}

\bib{arXiv:1910.01955}{article}{
      author={Conca, Aldo},
      author={Tsakiris, Manolis~C.},
       title={Resolution of ideals associated to subspace arrangements},
        date={2019},
      eprint={arXiv:1910.01955},
}

\bib{MR3779569}{incollection}{
      author={Dao, Hailong},
      author={De~Stefani, Alessandro},
      author={Grifo, Elo\'{\i}sa},
      author={Huneke, Craig},
      author={N\'{u}\~{n}ez Betancourt, Luis},
       title={Symbolic powers of ideals},
        date={2018},
   booktitle={Singularities and foliations. geometry, topology and
  applications},
      series={Springer Proc. Math. Stat.},
      volume={222},
   publisher={Springer, Cham},
       pages={387\ndash 432},
      review={\MR{3779569}},
}

\bib{MR535850}{article}{
      author={Eisenbud, David},
      author={Hochster, Melvin},
       title={A {N}ullstellensatz with nilpotents and {Z}ariski's main lemma on
  holomorphic functions},
        date={1979},
        ISSN={0021-8693},
     journal={J. Algebra},
      volume={58},
       pages={157\ndash 161},
         url={https://doi.org/10.1016/0021-8693(79)90196-0},
      review={\MR{535850}},
}

\bib{MR3906569}{article}{
      author={Galetto, Federico},
      author={Geramita, Anthony~V.},
      author={Shin, Yong-Su},
      author={Van~Tuyl, Adam},
       title={The symbolic defect of an ideal},
        date={2019},
        ISSN={0022-4049},
     journal={J. Pure Appl. Algebra},
      volume={223},
       pages={2709\ndash 2731},
         url={https://doi.org/10.1016/j.jpaa.2018.11.019},
      review={\MR{3906569}},
}

\bib{MR3864202}{article}{
      author={Garrousian, Mehdi},
      author={Simis, Aron},
      author={Toh\u{a}neanu, \c{S}tefan~O.},
       title={A blowup algebra for hyperplane arrangements},
        date={2018},
        ISSN={1937-0652},
     journal={Algebra Number Theory},
      volume={12},
       pages={1401\ndash 1429},
         url={https://doi.org/10.2140/ant.2018.12.1401},
      review={\MR{3864202}},
}

\bib{MR3003727}{article}{
      author={Geramita, A.~V.},
      author={Harbourne, B.},
      author={Migliore, J.},
       title={Star configurations in {$\Bbb{P}^n$}},
        date={2013},
        ISSN={0021-8693},
     journal={J. Algebra},
      volume={376},
       pages={279\ndash 299},
         url={https://doi.org/10.1016/j.jalgebra.2012.11.034},
      review={\MR{3003727}},
}

\bib{MR3683102}{article}{
      author={Geramita, A.~V.},
      author={Harbourne, B.},
      author={Migliore, J.},
      author={Nagel, U.},
       title={Matroid configurations and symbolic powers of their ideals},
        date={2017},
        ISSN={0002-9947},
     journal={Trans. Amer. Math. Soc.},
      volume={369},
       pages={7049\ndash 7066},
         url={https://doi.org/10.1090/tran/6874},
      review={\MR{3683102}},
}

\bib{M2}{misc}{
      author={Grayson, Daniel~R.},
      author={Stillman, Michael~E.},
       title={Macaulay2, a software system for research in algebraic geometry},
        date={2018},
        note={Available at \texttt{http://www.math.uiuc.edu/Macaulay2/}},
}

\bib{arXiv:1702.01766}{article}{
      author={H{\`a}, Huy~T{\`a}i},
      author={Nguyen, Hop~Dang},
      author={Trung, Ngo~Viet},
      author={Trung, Tran~Nam},
       title={Symbolic powers of sums of ideals},
        date={2019},
        ISSN={1432-1823},
     journal={accepted to Mathematische Zeitschrift},
         url={https://doi.org/10.1007/s00209-019-02323-8},
}

\bib{MR2724673}{book}{
      author={Herzog, J{\"u}rgen},
      author={Hibi, Takayuki},
       title={Monomial ideals},
      series={Graduate Texts in Mathematics},
   publisher={Springer-Verlag London Ltd.},
     address={London},
        date={2011},
      volume={260},
        ISBN={978-0-85729-105-9},
      review={\MR{2724673}},
}

\bib{MR3390029}{article}{
      author={Lampa-Baczy\'{n}ska, Magdalena},
      author={Malara, Grzegorz},
       title={On the containment hierarchy for simplicial ideals},
        date={2015},
        ISSN={0022-4049},
     journal={J. Pure Appl. Algebra},
      volume={219},
       pages={5402\ndash 5412},
         url={https://doi.org/10.1016/j.jpaa.2015.05.022},
      review={\MR{3390029}},
}

\bib{arXiv:1907.08172}{article}{
      author={{Mantero}, Paolo},
       title={{The structure and free resolution of the symbolic powers of star
  configurations of hypersurfaces}},
        date={2019},
      eprint={arXiv:1907.08172},
}

\bib{MR879273}{book}{
      author={Matsumura, Hideyuki},
       title={Commutative ring theory},
      series={Cambridge Studies in Advanced Mathematics},
   publisher={Cambridge University Press},
     address={Cambridge},
        date={1986},
      volume={8},
        ISBN={0-521-25916-9},
}

\bib{MR4022079}{article}{
      author={Nguyen, Hop~Dang},
      author={Trung, Ngo~Viet},
       title={Depth functions of symbolic powers of homogeneous ideals},
        date={2019},
        ISSN={0020-9910},
     journal={Invent. Math.},
      volume={218},
       pages={779\ndash 827},
         url={https://doi.org/10.1007/s00222-019-00897-y},
      review={\MR{4022079}},
}

\bib{MR2480571}{article}{
      author={Schenck, Hal},
      author={Toh\v{a}neanu, \c{S}tefan~O.},
       title={The {O}rlik-{T}erao algebra and 2-formality},
        date={2009},
        ISSN={1073-2780},
     journal={Math. Res. Lett.},
      volume={16},
       pages={171\ndash 182},
         url={https://doi.org/10.4310/MRL.2009.v16.n1.a17},
      review={\MR{2480571}},
}

\bib{MR2679386}{article}{
      author={Toh\v{a}neanu, \c{S}tefan~O.},
       title={On the de {B}oer-{P}ellikaan method for computing minimum
  distance},
        date={2010},
        ISSN={0747-7171},
     journal={J. Symbolic Comput.},
      volume={45},
       pages={965\ndash 974},
         url={https://doi.org/10.1016/j.jsc.2010.06.021},
      review={\MR{2679386}},
}

\bib{MR3134006}{article}{
      author={Toh\v{a}neanu, \c{S}tefan~O.},
       title={A commutative algebraic approach to the fitting problem},
        date={2014},
        ISSN={0002-9939},
     journal={Proc. Amer. Math. Soc.},
      volume={142},
       pages={659\ndash 666},
         url={https://doi.org/10.1090/S0002-9939-2013-11814-9},
      review={\MR{3134006}},
}

\bib{arXiv:1906.08346}{article}{
      author={Toh\v{a}neanu, \c{S}tefan~O.},
      author={Xie, Yu},
       title={{On the Geramita-Harbourne-Migliore conjecture}},
        date={2019},
      eprint={arXiv:1906.08346},
}

\bib{MR3362802}{book}{
      author={Villarreal, Rafael~H.},
       title={Monomial algebras},
     edition={Second},
      series={Monographs and Research Notes in Mathematics},
   publisher={CRC Press, Boca Raton, FL},
        date={2015},
        ISBN={978-1-4822-3469-5},
      review={\MR{3362802}},
}

\end{biblist}
\end{bibdiv}
\end{document}